\newcommand{\C}{\mathbb{C}}
\newcommand{\Z}{\mathbb{Z}}
\newcommand{\R}{\mathbb{R}}
\newcommand{\Q}{\mathbb{Q}}
\newcommand{\bB}{\mathbb{B}}
\newcommand{\cO}{\mathscr{O}}
\newcommand{\cL}{\mathscr{L}}
\newcommand{\cA}{\mathscr{A}}
\newcommand{\cB}{\mathscr{B}}
\newcommand{\cN}{\mathscr{N}}
\newcommand{\cV}{\mathscr{V}}
\newcommand{\Ovir}{\mathscr{O}_\textup{vir}}
\newcommand{\Kvir}{\mathscr{K}_\textup{vir}}
\newcommand{\cM}{\mathcal{M}}
\newcommand{\fm}{\mathfrak{m}}
\newcommand{\fg}{\mathfrak{g}}
\newcommand{\ft}{\mathfrak{t}}
\newcommand{\fC}{\mathfrak{C}}
\newcommand{\cNr}{\cN_\textup{vtx}}
\newcommand{\bx}{\textup{\ding{114}}}
\DeclareMathOperator{\Hilb}{Hilb}
\DeclareMathOperator{\Coh}{Coh}
\DeclareMathOperator{\Hom}{Hom}
\DeclareMathOperator{\Ann}{Ann}
\DeclareMathOperator{\End}{End}
\DeclareMathOperator{\Aut}{Aut}
\DeclareMathOperator{\Lie}{Lie}
\DeclareMathOperator{\Iso}{Iso}
\DeclareMathOperator{\Def}{Def}
\DeclareMathOperator{\Obs}{Obs}
\DeclareMathOperator{\Chow}{\mathsf{Chow}}
\DeclareMathOperator{\Conf}{\mathsf{Conf}}
\DeclareMathOperator{\PT}{\mathsf{PT}}
\DeclareMathOperator{\DT}{\mathsf{DT}}
\DeclareMathOperator{\Rep}{Rep}
\DeclareMathOperator{\cycle}{\mathsf{cycle}}
\DeclareMathOperator{\sweep}{\mathsf{sweep}}
\DeclareMathOperator{\Ker}{Ker}
\DeclareMathOperator{\Coker}{Coker}
\DeclareMathOperator{\ind}{\mathbf{index}}
\DeclareMathOperator{\supp}{supp}
\DeclareMathOperator{\Pic}{Pic}
\DeclareMathOperator{\eval}{eval}
\DeclareMathOperator{\vir}{vir}
\DeclareMathOperator{\tr}{tr}
\DeclareMathOperator{\rk}{rk} 
\DeclareMathOperator{\Td}{Td}
\DeclareMathOperator{\ch}{ch}
\DeclareMathOperator{\Alt}{\textsf{Alt}}
\newcommand{\sind}{\mathsf{ind}}
\newcommand{\bL}{\mathbb{L}}
\newcommand{\pP}{\mathbb{P}}
\newcommand{\cC}{\mathscr{C}}
\newcommand{\cE}{\mathscr{E}}
\newcommand{\cF}{\mathscr{F}}
\newcommand{\cI}{\mathscr{I}}
\newcommand{\cJ}{\mathscr{J}}
\newcommand{\cG}{\mathscr{G}}
\newcommand{\cU}{\mathscr{U}}
\newcommand{\cX}{\mathscr{X}}
\newcommand{\Ct}{\mathbb{C}^\times}
\newcommand{\Cq}{\Ct_q}
\newcommand{\MM}{\textsf{M2}}
\newcommand{\tO}{\widetilde{\mathscr{O}}}
\newcommand{\bT}{\mathsf{T}}
\newcommand{\bS}{\mathsf{S}}
\newcommand{\bA}{\mathsf{A}}
\newcommand{\bV}{\mathsf{V}}
\newcommand{\TK}{\bT_\textup{K\"ahler}}
\newcommand{\pt}{\textup{pt}}
\newcommand{\bPhi}{\mathbf{\Phi}}
\newcommand{\cK}{\mathscr{K}}
\newcommand{\Dir}{\slashed D} 
\newcommand{\bk}{\boldsymbol{\kappa}}
\newcommand{\frL}{\textfrak{L}}
\newcommand{\Hlb}{\mathbf{H}}
\newcommand{\dbar}{\bar\partial} 
\newcommand{\cW}{\mathscr{W}}
\newcommand{\Ld}{{\Lambda}^{\!\raisebox{0.5mm}{$\scriptscriptstyle \bullet$}\,}}
\newcommand{\Ed}{{\cE}^{\raisebox{0.5mm}{$\scriptscriptstyle \bullet$}\,}}
\newcommand{\Sd}{{\bS}^{\raisebox{0.5mm}{$\scriptscriptstyle \bullet$}}}
\newcommand{\tSd}{{\widetilde{\bS}}^{\raisebox{0.5mm}{$\scriptscriptstyle \bullet$}}}
\newcommand{\oSd}{{S}^{\raisebox{0.5mm}{$\scriptscriptstyle \bullet$}}}
\newcommand{\Hd}{{H}^{\raisebox{0.5mm}{$\scriptscriptstyle \bullet$}}}
\newcommand{\Indx}{\underline{\mathit{Index}}}
\newcommand{\Gk}{G_{\bk}}
\newtheorem{Theorem}{Theorem}
\newtheorem{Conjecture}{Conjecture}
\newtheorem{Lemma}{Lemma}[section]
\newtheorem{Proposition}[Lemma]{Proposition}
\theoremstyle{definition}
\newtheorem{Definition}{Definition}
\begin{document}

\title{Membranes and Sheaves} 
\author{Nikita Nekrasov and 
Andrei Okounkov}
\date{April 2014} 
\maketitle

\setcounter{tocdepth}{2}
\tableofcontents

\section{A brief introduction}

\subsection{Overview}

Our goal in this paper is to discuss a conjectural correspondence between 
enumerative geometry of curves in Calabi-Yau 5-folds $Z$ and 
$1$-dimensional sheaves on 3-folds $X$ that are embedded in $Z$ as fixed
points of certain $\Ct$-actions. In both cases, the enumerative
information is taken in equivariant $K$-theory, where the 
equivariance is with respect to all automorphisms of the problem.

In Donaldson-Thomas theories, one sums up over all Euler characteristics
with a weight $(-q)^\chi$, where $q$ is a 
parameter\footnote{Note the difference with the traditional weighing by $q^\chi$ 
as in \cite{mnop}. The change of sign of $q$ fits much better with 
all correspondences.}.  Informally, $q$ is referred to as 
the \emph{boxcounting} parameter.  The main feature of the 
correspondence is that the 3-dimensional boxcounting parameter
$q$ becomes in $5$ dimensions the \emph{equivariant} parameter for $\Ct$-action that 
defines $X$ inside $Z$.  To stress this we will use the notation 
$\Cq$ in what follows.  

The $5$-dimensional theory effectively sums up the $q$-expansion in 
the Donaldson-Thomas theory. In particular, it gives a natural explanation
of the rationality (in $q$) of the DT partition functions. Other expected
as well as unexpected symmetries of the DT counts follow naturally from the 
5-dimensional perspective, see below.  These involve choosing different
$\Ct$-actions on the same $Z$ as our $\Cq$, and thus relating the 
same 5-dimensional theory to different DT problems. 

The important special case $Z=X \times \C^2$ is 
considered in detail in Sections \ref{s_refined} and
\ref{s_index_vertex}.  If $X$ is a toric Calabi-Yau threefold, we
compute the theory in terms of a certain \emph{index vertex}. We 
show the refined vertex found combinatorially by 
 Iqbal, Kozcaz, and Vafa in \cite{IKV} is a special case of the 
index vertex. 

\subsection{Motivation from M-theory}

\subsubsection{}

The aim of this section is to explain the physical origins of the 
problems studied in the paper and to give an interested 
physicist an idea of what is going on in this largely purely 
mathematical paper. 

One of the most striking features of the 
duality between string theory and M-theory is the geometric 
interpretation that it gives to the string coupling constant. Recall that 
the string coupling constant measures the amplitude of creating a
handle in the string worldsheet (which in the point particle limit
becomes the Planck constant, the weight of a Feynman diagram loop). 
String theory on a 
10-dimensional\footnote{We use real dimensions until we specialized the 
discussion to complex manifolds; complex dimensions are used
elsewhere in the paper.}   spacetime $Z$ is related to
M-theory on an circle-bundle 
\begin{equation}
  \xymatrix{
S^1 \ar@{^{(}->}[r] &  \widetilde{Z} \ar@{->}[d] \\
& Z 
}\label{tZ} 
\end{equation}
over $Z$, and the 
length of the circle fiber translates into the string coupling
constant \cite{Witten:1995ex}. 

When the 10-dimensional spacetime $Z$ is a product 
$$
Z = X \times {\R}^{1,3}
$$
of a Calabi-Yau threefold $X$ and the Minkowski space ${\R}^{1,3}$,
certain string theory amplitudes describing the scattering of soft
graviphoton modes in the effective four dimensional supergravity
theory are given exactly by the genus $g$ amplitudes of the
topological string theory on $X$ \cite{Bershadsky:1993cx,
  Antoniadis:1993ze}.  In this computation, the role of 
of string coupling constant is replaced by the 
field strength of the graviphoton gauge field. Topological 
string amplitudes have an accepted mathematical definition as 
Gromov-Witten invariants of $X$. 

\subsubsection{}

The appearance of the Donaldson-Thomas theory of $X$ may be traced to 
the duality between between the Taub-NUT space $(\R^4, ds_\textup{TN}^2)$,
also known as the
Kaluza-Klein magnetic monopole, in M-theory and the D6-brane of the
IIA string proposed in \cite{Hull:1997kt,Sen:1997js}.  

Recall that $ds_\textup{TN}^2$ is a complete hyperK\"ahler metric
on $\R^4$ with $U(2)$ group of isometries. In particular, the fibers
of any rank 2 holomorphic bundle $\cV$ over a K\"ahler manifold $X$ 
may be given the Taub-NUT metric. For 
\begin{equation}
\widetilde{Z} = \!\!\!\!
\begin{matrix}
\quad\,\, \cV \oplus  \R^1_\textup{time} \\
\downarrow\\
X
\end{matrix} \,,\label{tZ2}
\end{equation}
to be a 
suitable background for M-theory it is necessary, in particular, that 
$$
\det \cV = \cK_X
$$
where $\cK_X$ is the canonical bundle of $X$. This means 
$\cK_Z$ is trivial, where $Z$ is 
the total space of $\cV$. 

The D6-brane emerges when we use $U(1) \subset SU(2) \subset 
\textup{Iso}(ds_\textup{TN}^2)$ as
the circle in \eqref{tZ}. For such $U(1)$-action to exist globally, 
we assume a decomposition $\cV = \cL_1 \oplus \cL_2$ into a 
direct sum of two line bundles with $\cL_1 \otimes \cL_2 = \cK_X$. 
The dual string description is that of IIA string on 
$$
Z' = 
\begin{matrix}
\cK_X \oplus \R^{1,1}  \\
\downarrow\\
X
\end{matrix} \,, 
$$
with a single D6-brane wrapped on $X$. 

On this D6-brane, lives a $U(1)$ gauge theory with maximal
supersymmetry in flat space-time ${\R}^{1,6}$. The bosonic fields of
the gauge supermultiplet are a gauge field $A$ and a triplet of
scalars ${\vec \Phi}$. When D6 is wrapped on $X$, the corresponding
supersymmetric theory is
twisted  in such a way that the triplet of scalars become a real
scalar ${\sigma} = {\sigma}^{*}$ and a complex field $\phi$, a section  
of the canonical bundle $\cK_X$. 

This theory on $X$ is not the conventional 
abelian gauge theory. In some aspects, for example,
 it can be viewed as a noncommutative deformation of the $U(1)$ gauge
 theory \cite{Nekrasov:1998ss}, the non-commutativity being related to
 the choice of the $B$-field in the IIA picture 
 \cite{Witten:2000mf}. Depending on the stability parameters, the 
instantons of this $U(1)$ theory can be, for example,
ideal sheaves $X$, see e.g.\ Section 6 in \cite{Iqbal:2003ds}. 

One of the goals of this paper is the precise 
identification of this theory with $K$-theoretic Donaldson-Thomas
theory of $X$. This identification takes into account both the 
intrinsic geometry of $X$, which \emph{need not} to be 
Calabi-Yau, and the extrinsic geometry of $X$ in $Z$, which is 
specified by the choice of $\cL_i$. The general context of our 
proposed identification is when $Z$ is a (noncompact) 
Calabi-Yau 5-fold and $X\subset Z$ is a fixed locus of a 
$U(1)$-action\footnote{
Of course, 
here $U(1)$ may be replaced by its complexification 
$\Ct$.}
that preserves the 5-form on $Z$. This generalizes to many other geometries in which the fixed locus
$X$ may be disconnected, such as those corresponding to 
multi-center Taub-NUT metrics, see e.g.\ Section \ref{eng_Hr}. 

The more familiar cohomological Donaldson-Thomas theory, 
which in particular is conjectured to be equivalent to the Gromov-Witten 
theory and thus to topological strings \cite{mnop}, 
is a certain limit from the K-theoretic 
computations that we do in this paper. 

\subsubsection{}

The unbroken supersymmetry of the theory on $X$ 
may be interpreted as the Dirac operator acting in a 
certain infinite-dimensional space\footnote{It is probably useful to keep in mind the analogy with elliptic genus \cite{Witten:1986bf}. There, the Dirac operator on the loop space of a Riemannian manifold $M$ is the unbroken supercharge of the two-dimensional sigma model with the target space $M$. In our case, the unbroken supercharge is the Dirac operator on the space of gauge equivalence classes of the pairs $(A, {\phi})$ consisting of the six dimensional gauge field $A$ and the $\cK_X$-valued
Higgs field. See also \cite{Nekrasov:1996cz} for the analogous
discussion in $4+1$ dimensions}. The index of this operator is the
partition function of the theory when time is made periodic, with 
periodic boundary conditions for fermions, also
known as Witten index. This index is best treated in equivariant 
K-theory with respect to all automorphism of the problem, which  
corresponds to taking more general 
quasi-periodic boundary conditions in time. 

Because the same time periodicity may be imposed in \eqref{tZ2}, 
it is reasonable to expect that the K-theoretic DT 
index of $X$ equals the index of $M$-theory on $Z$
(cf. \cite{Losev:1997hx}). More precisely, since the instantons 
in DT theory may be seen as bound states of D6, D2, and D0 branes, 
this is the index of the sector that involves only membranes of 
M-theory and excludes the M5-branes. Finding a Donaldson-Thomas
description of M5-branes remains an important direction 
for future research.


\subsubsection{}

With a purely algebro-geometric description of the theory on the 
D6 brane at hand, it is logical to ask for a similar description of
membrane contributions to the M-theory index. Ideally, the 
moduli space of supersymmetric membranes should be described
as a compact algebraic variety for any given homology class of the 
membrane and the contribution of these membranes to the
M-theory index should equal the index of a certain canonical 
K-theory class on this moduli space.

It is natural to pursue this goal for an 
arbitrary smooth quasiprojective 5-fold $Z$ with a trivial canonical 
bundle $\cK_Z$. In particular, we don't require $Z$ to be compact or 
satisfy other constraints like those in \cite{HLS}. 
A prototypical supersymmetric membrane in this geometry 
has the form $S^1_\textup{time} \times C$, where $C\to Z$ is an immersed 
algebraic curve. 

The geometric and physical difficulty arises when $C$ degenerates
and develops multiplicities or other singularities.
Moduli spaces typically used 
in algebraic geometry are not suitable either because their 
local geometry, i.e.\ the deformation 
theory, is too bad (e.g.\ Chow varieties) or because they have 
infinitely many connected 
components for a fixed degree of $C$ (e.g.\ moduli of
stable maps), or both (e.g.\ Hilbert schemes). 

Both issues are
problematic for a physicist who wants to construct a version of
Dirac operator on these moduli spaces
and does not have parameters to keep track of discrete
invariants of $C$ other than its degree. 
While understanding multiple 
membranes has been a very active area of research, see for example 
\cite{BLMP}, it is not clear to us what the
approaches surveyed there say about the geometric problem at
hand. 

Based on our conjectural correspondence with Donaldson-Thomas theory, 
we make a proposal for the membrane moduli spaces, the pros and 
cons of which are discussed in Section \ref{s_M}. In any event, 
we expect our Conjecture \ref{sconj1} to be very useful as a selection tool between various candidates for
moduli of supersymmetric membranes.

\subsection{Plan of the paper}

Section \ref{s1} discusses the general outline of the conjectures, 
without a complete specification of the integrands. Those are
discussed in Section \ref{s_DT} for the Donaldson-Thomas theory 
and Section \ref{s_M} for membranes of M-theory, respectively. 
Several examples of the correspondence, in which one can already 
see all the ingredients of the general conjectures, are discussed
in Section \ref{s_exa}. 

The integrands in both Donaldson-Thomas theory and 
M-theory involve square roots of certain line bundles. The 
existence of these square roots is investigated in Section \ref{s_sqrt}. 

A very special case of the general theory is when $X$ is Calabi-Yau
and $Z=X \times \C^2$. In this case, the index of DT theory 
enjoys a certain rigidity: it factors through  a 
 \emph{character} of the automorphism group given 
by the square root of the weight of $\Omega^3_X$. This is 
discussed in Section \ref{s_refined}.  This rigidity simplifies
computations. In particular, for an arbitrary toric threefold, 
the K-theoretic DT invariants may be expressed in terms of 
a certain K-theoretic vertex, see Section \ref{s_K_vertex}. 
For $Z=X \times \C^2$, we can replace it by 
a simpler object, the index vertex, see Section \ref{s_index_vertex}.

\subsection{Acknowledgements}

We have beed working on this project for a long time and our 
interaction with Mina Aganagic,  Johan de Jong, Davesh Maulik, Edward Witten, and
others were very important for us in this process. 

Davesh Maulik made a decisive progress on 
 our conjecture relating $K$-theoretic and 
motivic Donaldson-Thomas invariants of Calabi-Yau 3-folds with a
torus action scaling the $3$-form, see \cite{Dmot}. Various computations 
with our index vertex are discussed by Choi, Katz, and Klemm in 
in \cite{CKK}. 

Research of NN was supported in part by RFBR grants 12-02-00594, 12-01-00525, by Agence Nationale de Recherche via the grant
ANR 12 BS05 003 02,  by Simons Foundation, and by Stony Brook
Foundation. Research of AO was supported by NSF FRG 1159416. 

A major part of this work was done while AO was visiting 
Simons Center for Geometry and Physics in Augusts of 2011 
and also 2012 
and he wishes to thank 
the Center and its director John Morgan for warm hospitality. 
We also thank IHES, Princeton University, Imperial College London,
MIT, and many other institutions where we had the opportunity 
to discuss the results presented here. 

We changed the 
preliminary title ``The index of M-theory'', which was also the 
title of many of our talks, to the one that, in our view, better 
reflects the essence of our main conjecture.

\section{Contours of the conjectures}\label{s1}

\subsection{$K$-theory preliminaries}

\subsubsection{}

In this paper, we use the word \emph{sheaf} as a shorthand for two very
different objects. The precise meaning should be 
clear from the context, except in the title of the paper. 

In most instances, by a sheaf on a scheme 
$Y$ we really mean a $K$-theory 
class of equivariant quasicoherent sheaves on $Y$. However, when 
we talk about moduli $\cM$ of sheaves on a smooth 3-fold $X$, we mean 
moduli of complexes of coherent sheaves on $X$ of specific shape 
and subject to certain stability conditions.  

The two 
occurrences of the word in the 
phrase \emph{let $\Ovir$ be the virtual structure sheaf of the moduli 
space $\cM$ of sheaves on $X$} exemplify the two different meanings. 

\subsubsection{}

For quasi-coherent sheaves
$\cF$, 
we require an action of a torus $\bT$ on $\cF$ such that: 
\begin{enumerate}
\item[(1)] $\bT$ acts trivially on $Y$;
\item[(2)] all weight spaces are coherent; 
\item[(3)] all nonzero weight spaces lie in a translate of a fixed nondegenerate
cone in the character group $\bT^\vee$. 
\end{enumerate}
The last condition makes sure $K_\bT(Y)$ is a ring with respect 
to tensor product and a module over 
$$
\Rep \bT = K_\bT(\pt)\,,
$$ 
which is defined with the same cone support condition.  

\subsubsection{}

The equivariance is always assumed to 
be maximal possible, i.e.\ with respect to all symmetries of the problem. 
For example, once a subgroup $\Cq\in \Aut(Z,\Omega^5)$ has been fixed, 
we want all constructions to be equivariant with respect to its centralizer 
$$
G_q = \Aut(Z,\Omega^5)^{\Cq} 
$$

\subsubsection{}\label{coneCq} 

A simple but fundamental choice for everything in the paper is the 
choice of the cone in $\Z = \left(\Cq\right)^\vee$. In English, it is 
a choice between expanding rational functions on $\Cq$ in a series near
$q=0$ or $q=\infty$. 

We choose $\Z_{\ge 0} \subset \Z$, or, equivalently, 
we choose expansions in ascending powers of $q$. This choice is
reflected in the asymmetry with which the attracting and repelling 
direction for the $\Cq$-action enter the formulas below.  

\subsubsection{}

To keep track of degree of curves in $Z$, it 
convenient to formally introduce a torus
\begin{equation}
\TK(Z) = \frac{H^2(Z,\C)}{2\pi i \, H^2(Z,\Z) \big/ \textup{torsion}} \cong (\Ct)^{b_2(Z)} \,. 
\label{defTK}
\end{equation}
By construction 
$$
\TK(Z)^\vee = H_2(Z,\Z) \big/ \textup{torsion} 
$$
so any curve $C\subset Z$ defines a character of $\TK(Z)$ which 
we denote $Q^{[C]}$. A natural nondegenerate cone in $\TK(Z)^\vee$ 
is formed by classes of holomorphic curves. 

\subsubsection{}\label{s_K_push}

All pull-backs and push-forwards are 
taken in equivariant $K$-theory. Non-proper push-forwards are defined
as equivariant residues if the induced maps on torus-fixed points are 
proper.

\subsection{The index sheaf} 

\subsubsection{} 

Let $Z$ be a nonsingular algebraic $5$-fold with a nowhere vanishing 
holomorphic $5$-form $\Omega^5$.  For any $g\in \Aut(X,\Omega^5)$
the following $Z$-bundle over $S^1$
\begin{equation}
S^1 \rtimes_g  Z = \R \times Z \Big/ (t,z)\sim (t+\ell,g\cdot z) \label{eSZ}
\end{equation}
is an 11-manifold on which M-theory may be studied. Here $\ell \in \R$
is a parameter, the length of the M-theory circle. 

 From general 
principles, 
$$
\textup{Partition function}(S^1 \rtimes_g  Z) = \tr_{\textup{Hilbert
    space}}
\, (\pm 1)^F g \, \exp\left(\ell \, \frac{d}{dt}\right)
$$
where $F$ is the fermion number operator and $g$ and $\frac{d}{dt}$ 
denote the action of the symmetry $g$ and an infinitesimal time
translation on the Hilbert space of the theory. The sign in $ (\pm
1)^F$ depends on the boundary conditions for fermions $\psi$.  In what 
follows, we choose  $(-1)^F$ which corresponds to 
$$
\psi(t+\ell) = \psi(t)\,. 
$$
With this choice of sign, supersymmetry will cancel all contributions to the partition 
function except for a certain index, known as the Witten index in this
context. 

\subsubsection{}

Supersymmetry means that the infinitesimal space time translation is
the square of an odd operator
\begin{equation}
\frac{d}{dt} = \Dir^2\,, \label{Dir2}
\end{equation}
which is a certain infinite-dimensional version of the Dirac
operator. 

While our understanding of the kinematics and dynamics
of M2-branes is still in its infancy, we may reasonably expect 
$\Dir$ to resemble Dirac operators familiar from 
finite-dimensional supersymmetric quantum mechanics on K\"ahler 
manifolds, see \cite{qfs}, which we briefly recall. 

 In particular, as a formal consequence of 
$$
\left\{\Dir, (-1)^F \right\}= 0 
$$
one expects
$$
\textup{Partition function}(S^1 \rtimes_g  Z) = 
\tr_{\ind
    \Dir} g 
$$
where
$$
\ind \Dir = \left(\Ker \Dir\right)_\textup{even} -  \left(\Ker
  \Dir\right)_\textup{odd} 
$$
is a virtual representation of all symmetries of the theory. 

\subsubsection{}

Let $\Conf$ denote the configuration space of a finite-dimensional 
classical mechanical system. This is a Riemannian manifold with the metric determined by 
kinetic energy. 

Hilbert spaces $\Hlb$ of corresponding quantum systems are formed by sections of
certain line bundles $\frL$ over $\Conf$. Differential operators acting in $\Hlb$
form a quantization of functions on $T^*\!\Conf$, that is, a quantization 
of the algebra of classical observables.  This algebra contains the Hamiltonian, 
i.e.\ the action of the infinitesimal time translation 
\begin{equation}
\frac{d}{dt}  \mapsto \textup{const} \, \Delta + \dots  \in \End \Hlb\,,
\label{HamLap}
\end{equation}
where $\Delta$ is the Laplace operator and dots stand for a differential operator 
of lower order \footnote{Nonzero constant like the one in \eqref{HamLap} 
are irrelevant for 
index computations and we will not pay attention to them.}. 

\subsubsection{}

To add fermions, one introduces a vector bundle $\Psi$ over $\Conf$ and takes
$$
\Hlb = L^2(\Conf,\frL \otimes \Ld \Psi^*)\,. 
$$
Sections of $\Ld \Psi^*$ may be viewed as function on a configuration
supermanifold where the odd degrees of freedom are described by the
bundle $\Psi$. 
Sections of $\Psi$ and $\Psi^*$ act by fermionic annihilation and creation operators, 
respectively, on the exterior algebra $\Ld \Psi^*$.

 In special cases, 
the  square root \eqref{Dir2} exist. For example, if $\frL$ is flat and 
$\Psi$ is the tangent bundle, we can take 
$$
\Dir = d + d^* \,,
$$
where $d$ is the deRham differential on $\frL$-valued forms. The cohomology 
of $d$ is the cohomology of $\Conf$ with values in the local system $\frL$. 

\subsubsection{} 

K\"ahler configuration spaces admit enlarged supersymmetry and more Dirac operators. 
If the metric on $\Conf$ is K\"ahler then the splitting 
$$
T\Conf \otimes_\R \C = T^{1,0} \oplus T^{0,1} 
$$
is holonomy invariant and for any holomorphic bundle $\cE$ one can take 
$$
\Dir =  \dbar + \dbar^*
$$
where
$$
\dbar \in \End L^2(\Conf,\cE \otimes \Ld {T^{0,1}}^*)
$$
is the Dolbeault differential. 

Holomorphic bundles thus play the same role for 
K\"ahler manifolds as flat bundles play for general Riemannian manifolds, including 
the identification 
\begin{equation}
\ind \Dir= \chi(\cE)\,,
\label{indchiE}
\end{equation}
where $\chi(\cE)$ is the holomorphic Euler characteristic. 

\subsubsection{}\label{s_Spinor}

A particularly important special case is when $\cE$ is a line bundle that squares to 
the canonical bundle 
$$
\cE^{\otimes 2} = \cK_{\Conf} =  \Lambda^{\textup{top}} {T^{1,0}}^* \,, 
$$
in which case 
$$
\mathscr{S}_{\pm} = \cK_{\Conf}^{1/2} \otimes \Lambda^\textup{even/odd} \, {T^{0,1}}^*
$$
are the spinor bundles of $\Conf$.  Square roots of (virtual) canonical bundles 
will appear everywhere in this paper.

\subsubsection{}

Suppose 
$$
\Ed = \frL \otimes \Ld \Psi^*\,,
$$
where $\frL$ and $\Psi$ are holomorphic bundles and 
let $s$ be holomorphic section $s$ of $\Psi$. Contraction with $s$
defines Kozsul complex on $\Ed$ which is 
exact away from 
\begin{equation}
\cM = \{s = 0 \}  \subset \Conf \,. \label{defcM}
\end{equation}
We then may take
$$
\Dir = Q  + Q^* 
$$
where $Q$ is the differential in the total complex of the Dolbeault double complex
of $\Ed$. The equality \eqref{indchiE} still holds, where $\chi(\Ed)$ in now the 
Euler characteristic of a complex. 

Since $\|s\|^2$ enters as the potential term in the Hamiltonian $\Dir^2$, the 
submanifold $\cM$, formed by absolute minima of $\|s\|^2$, is also known 
as the locus of supersymmetric vacua in $\Conf$. 
The special case $s=\partial \cW$, where 
$$
\cW : \Conf \to \C 
$$
is a holomorphic function called \emph{superpotential}, is often emphasized. 

\subsubsection{}

The relevance of this discussion for systems with infinitely many degrees of 
freedom lies in the fact that even for infinite-dimensional $\Conf$ and $\Psi$, 
the complex $\Ed$ may turn out to be quasi-isomorphic, at least formally, to 
a bounded complex 
\begin{equation}
\Ed \cong \Indx \in D^b(\Coh \cM) \label{defIndx}
\end{equation}
of coherent sheaves supported on a countable disjoint union $\cM$ of algebraic 
varieties\footnote{In the present paper, we focus on the index, which only depends
on the $K$-theory class of the complex \eqref{defIndx}. However, the finer 
information lost by passing to the $K$-groups is of definite physical importance 
and it would be very interesting to know whether it can be accessed along the 
lines of the present paper.}. 

If, in fact, $\cM$ has infinitely many connected components then the theory 
must have a parameter that serves as the argument of the generating 
function over $\pi_0(\cM)$.  

\subsubsection{}

For M2-branes in $Z$, the configuration space $\Conf$ is the loosely defined
space of all surfaces in $Z$. It is reasonable to think it inherits the K\"ahler 
structure from that of $Z$. 

The moduli space $\cM$ of supersymmetric M2-branes is expected to be a 
certain compactification of the moduli space $\cM_0$ of immersed holomorphic 
curves $f:C\to Z$. For given degree
$$
\beta=f_* [C] \in H_2(Z,\Z) 
$$
and genus $g=g(C)$, $\cM_0$ is an algebraic variety with perfect obstruction 
theory given by 
$$
\Def - \Obs = \Hd(C,N_f) \,,
$$
where $N_f$ is the normal bundle to the immersion $f$. 

\subsubsection{}\label{Mbound} 

M-theory has a field, namely the 3-form, that couples to the degree $\beta$ 
through its 2-form component along $Z$.  This gives the variables in the 
K\"ahler torus \eqref{defTK} that grade the index by the degree of the membrane. 

A simple but essential point is that M-theory \emph{does not} have a parameter
that couples to the genus of $C$. A related observation is that Euler characteristic 
vanishes for any smooth real 3-fold, in particular, for 
a smooth worldvolume of an M2-brane. However, the genus of an 
immersed holomorphic curve is bounded above in terms of $\beta$, and hence 
a special genus-counting parameter is not required. 

Whether or not $\cM_0\subset\cM$ is dense, we will require $\cM$ to be 
an algebraic variety for fixed degree. 
This will insure that the grading by the K\"ahler torus $\TK(Z)$, with 
the assumptions of Section \ref{s_K_push}, is sufficient to define the M2-brane index.

\subsubsection{} 

The content of this paper may be very informally described as an attempt to guess
the space $\cM$, with the sheaf $\Indx$, from a mixture of constraints, clues, and 
conjectures, such as those just discussed. 

The principal new ingredient is a conjectural relation with Donaldson-Thomas (DT)
theory of algebraic $3$-folds that arise as fixed points $Z^{\C^\times}$ for certain 
special $\C^\times$-actions on $Z$. This relation will be discussed presently.

\subsection{Comparison with Donaldson-Thomas theory}

\subsubsection{} 
Our conjectural connection between M2-brane index and DT theory takes place 
when $Z$ admits a $\C^\times$-action of a very special kind. To distinguish this special 
$1$-dimensional torus from all other ones, we denote its element by $q$ and write
$\Cq$. 

So, we suppose there exists a symmetry
$$
\Cq \hookrightarrow \Aut(X,\Omega^5)
$$
such that it fixed locus 
$$
X = \bigsqcup X_i = Z^{\Cq} \,. 
$$
has pure dimension $3$. Here $X_i$ are the connected components of $X$. 
Since $\Cq$ preserves the $5$-form, we have 
\begin{equation}
N_X Z = \cL_1 \oplus \cL_2 \,, \quad \cL_1 \otimes \cL_2 = \cK_X\label{NXZ}
\end{equation}
where $\cL_1$ and $\cL_2$ are $\Cq$-eigensubbundles with weights 
$q$ and $q^{-1}$, where $q\in \Cq$ is the coordinate.

In particular, 
the total space of rank two bundle like \eqref{NXZ} over an 
arbitrary nonsingular 3-fold $X$ is the basic example for most
constructions in this paper. 

\subsubsection{}

Since each $X_i$ is a nonsingular 3-fold, its DT theory is defined.  In particular, the DT moduli spaces have virtual structure 
sheaves as well as modified virtual structure sheaves $\tO_{\DT}$ which will be discussed below.

Of the many possible stability chambers of the DT theory of $X$, the Pandharipande-Thomas chamber is the natural choice for us. The PT moduli spaces parameterize $1$-dimensional sheaves with a
section 
$$
s : \cO_X \to \cF\,,
$$
subject to certain stability conditions. In particular, these spaces are trivial in degree zero, 
matching the trivial contribution of empty membranes to the M-theory
index. 

Formula \eqref{ePTDT} below summarizes the expected relation between 
$K$-theoretic counts in the PT and the Hilbert scheme chambers.

\subsubsection{} \label{s_pi_DTM}

If $\cF$ is the 1-dimensional sheaf on $X$ we set
$$
\cycle(\cF) = \sum_{\cC \subset \supp \cF} \textup{length}(\cF_c) \,
\cdot \, \cC
$$
where $\cC$ ranges over 1-dimensional components 
of the reduced support of 
$\cF$ and $c\in \cC$ is the generic point. This may  be 
promoted to a morphism\footnote{
There is a large body of research on constructing the 
 parameter space for cycles
in $X$ of given dimension ($=1$, for us) and degree, 
first as a reduced algebraic variety, the Chow variety, see 
in particular \cite{Bar,Koll}, and then,
ideally, as a scheme with a natural scheme structure, such
that e.g.\  $\pi_{\PT}$ and $\pi_{\MM}$ are
maps of schemes. Certain aspects of this theory will be revisited in the 
forthcoming note \cite{JO}. We continue to call $\Chow(X)$ 
the Chow variety for historical reasons.}
$$
\pi_{\PT} : \PT(X) \to \Chow(X) 
$$
from the Pandharipande-Thomas moduli spaces of $X$ 
 the Chow variety of $X$. 
On the membrane side, there is a parallel map
$$
\pi_{\MM}: \MM(Z)^{\Cq}  \to \Chow(X)
$$
that keeps those components $C_i$ of $C = \bigcup C_i$ that are fixed point-wise
by $\Cq$ and discards the others, see Figure \ref{f_connectors}.

\begin{figure}[!htbp]
  \centering
   \scalebox{0.75}{\includegraphics{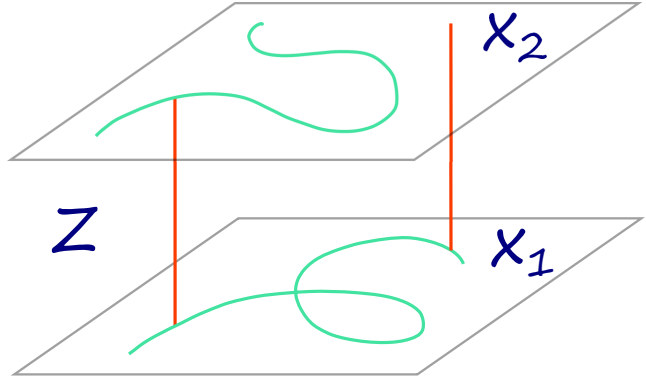}}
 \caption{The $\Cq$-fixed locus $X$ may be disconnected and the map
   $\pi_{\MM}$ keeps those components of $\Cq$-invariant curves that
   lie in $X$ and discards the $\Cq$-orbits that are drawn vertically
 in the picture.}
  \label{f_connectors}
\end{figure}

\subsubsection{}

Consider the diagram of maps
$$
\xymatrix{
\MM(Z) & \MM(Z)^{\Cq} \ar@{->}[l]_{\displaystyle{\iota}} \ar@{->}[rd]_{\displaystyle{\pi_\MM}}
&& \PT(X) 
\ar@{->}[ld]^{\displaystyle{\pi_{\PT}}}\\
&& \Chow(X) 
}
$$
in which $\iota$ is the inclusion of the fixed locus. 
Assuming an equivariant localization formula may be proven for
$\tO_\MM$, it would produce a sheaf $\tO_{\MM,\textup{localized}}$
on the fixed locus such that 
$$
\iota_* \tO_{\MM,\textup{localized}} =  \tO_\MM
$$
in localized equivariant $K$-theory of $\MM(Z)$. We denote
$$
 \tO_{\MM,\textup{localized}} = \iota_*^{-1} \,  \tO_\MM \,. 
$$
This puts us 
in the position to compare the push-forward of $\tO_{\MM,\textup{localized}}$
to the Chow variety of $X$ with the similar push-forward from the 
sheaf side. 

\subsubsection{}
There are natural $S(d)$-invariant maps 
$$
\Sigma_d: \Chow(X)^{\times d} \to \Chow(X)
$$
given by addition of cycles 
$$
\left(\cC_1,\dots,\cC_d\right) \mapsto \sum \cC_i \,.
$$
Given a sheaf $\cF$ on $\Chow(X)$, we define its symmetric 
algebra over $\Chow(X)$ by
$$
\bS_{\Chow}\,  \cF = \bigoplus_{d=0}^{\infty} \left( \Sigma_{d,*} \cF^{\boxtimes d}
\right)^{S(d)} \,. 
$$

\subsubsection{}

The following is our main conjecture, in an abstract form: 

\begin{Conjecture}\label{sconj1} We have the following equality in 
$\TK(Z) \times G_q$-equivariant $K$-theory of the Chow 
variety: 
\begin{equation}
\bS_{\Chow}\,  \pi_{\MM,*} \, \iota_*^{-1} \, \tO_{\MM} = \pi_{\PT,*} \, 
\left(
\tO_{\PT} \otimes \bPhi \right)\label{conj1}
\end{equation}
where $\bPhi$ is a certain explicit combination of the universal sheaves
on $\prod \PT(X_i)$ that describes the interaction of the 
components of $X$ 
inside $Z$, see Section \ref{s_inter} below. 
\end{Conjecture}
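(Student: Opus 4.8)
Since \eqref{conj1} is stated as a conjecture, what follows is a program rather than a proof. The plan is to anchor on the Donaldson--Thomas/PT side, which is rigorously defined for each nonsingular threefold $X_i$, and to produce the left-hand side by virtual equivariant localization from a geometric model of the membrane moduli space. The first step is to make that left-hand side meaningful: construct $\MM(Z)$ as an algebraic variety in each degree $\beta\in H_2(Z,\Z)$ --- the candidate of Section \ref{s_M} --- equip it with a perfect obstruction theory extending $\Hd(C,N_f)$ over the locus $\cM_0$ of immersed holomorphic curves, and build the modified, square-root-twisted virtual structure sheaf $\tO_\MM$ using the square roots established in Section \ref{s_sqrt}. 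One then needs a $K$-theoretic virtual localization theorem for $\tO_\MM$ under $\Cq$, so that $\iota_*^{-1}\tO_\MM$ is a well-defined class in the localized $\TK(Z)\times G_q$-equivariant $K$-theory of $\MM(Z)^{\Cq}$; this is the square-root-twisted analogue of the usual virtual localization formula, and its proof should follow the standard obstruction-theory bookkeeping once the obstruction theory and the square root are in place.

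The second step is to identify the fixed-point contributions. A $\Cq$-invariant curve in $Z$ breaks into components contained in $X=Z^{\Cq}$ and ``connector'' components that are closures of $\Cq$-orbits meeting $X$, so $\MM(Z)^{\Cq}$ fibers over $\prod_i\Chow(X_i)$ with the connectors as fiberwise data and $\pi_\MM$ forgetting them. The analysis is local near $X$, where $Z$ is the total space of $\cL_1\oplus\cL_2$ with $\Cq$-weights $q^{\pm1}$ as in \eqref{NXZ}. Two things must then be shown: that summing over all configurations of disjoint connectors and pushing forward to $\Chow(X)$ produces exactly the symmetric algebra $\bS_{\Chow}$ of the connected contribution --- this is forced by the multiplicativity of the M-theory index over disjoint membranes, i.e.\ its grand-canonical structure --- and that the connected contribution, after localization, equals $\pi_{\PT,*}(\tO_{\PT}\otimes\bPhi)$ with $\bPhi$ assembled from the universal sheaves on $\prod_i\PT(X_i)$ as in Section \ref{s_inter}.

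The third step reduces the connected matching to toric local models. Using deformation invariance of both sides, degenerate $Z$ to $X\times\C^2$ and then $X$ into its toric building blocks, so that the identity becomes a comparison of the $K$-theoretic DT vertex of Section \ref{s_K_vertex} with the index vertex of Section \ref{s_index_vertex}, and ultimately the combinatorial identity with the refined vertex of Iqbal--Kozcaz--Vafa \cite{IKV}; that identity can be checked directly, and gluing and degeneration then propagate it to general $Z$.

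The decisive obstacle is the very first step: at present there is no established theory of M2-brane moduli spaces carrying a perfect obstruction theory, and constructing such an $\MM(Z)$ with its sheaf $\tO_\MM$ is precisely the open problem the paper poses. Consequently the honest status of \eqref{conj1} is either a theorem conditional on a specific construction of $(\MM(Z),\tO_\MM)$ of Chow/PT type --- the remaining content being the localization-plus-degeneration argument above --- or else a defining property of the membrane side, in which case what must be proved is the mutual consistency of this definition across the various choices of $\Cq\subset\Aut(Z,\Omega^5)$: a family of nontrivial symmetries and rationality properties of $K$-theoretic DT invariants that would need to be verified in their own right, e.g.\ along the lines of Section \ref{s_refined} and the motivic comparison of \cite{Dmot}.
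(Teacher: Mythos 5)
The statement you were asked to prove is a \emph{conjecture}; the paper supplies no proof of \eqref{conj1}, and you have correctly diagnosed why: the membrane side $(\MM(Z),\tO_\MM)$ is not yet constructed with a perfect obstruction theory, and until it is, \eqref{conj1} is either conditional on such a construction or serves to \emph{define} the membrane index. That reading matches the paper's own stated outlook in Sections~\ref{s1} and~\ref{s_M}.

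Where your program diverges from what the paper actually does is in the third step. The paper does not pursue a degeneration-plus-gluing reduction of \eqref{conj1} to a toric vertex identity. Instead, its evidence is obtained by direct local analysis of both sides of \eqref{conj1} in specific geometries (Section~\ref{s_exa}): reduced local curves fix the prefactor and the $\bL_{\cL_1-\cL_2}$ twist in \eqref{deftODT}, while the single-interaction geometry fixes the form of $\bPhi$ in \eqref{defbPhi}; the rank-$r$ engineering of Proposition~\ref{p_eng} is derived \emph{assuming} Conjecture~\ref{sconj1}, not towards it. The index vertex of Section~\ref{s_index_vertex} is developed as a computational tool on the DT side of a fixed $Z=X\times\C^2$ and compared to \cite{IKV}; it is not used to verify \eqref{conj1}, and a degeneration argument for the left-hand side would require a degeneration formula for $\tO_\MM$ that the paper does not formulate. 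Finally, the issue in Section~\ref{s_23inter} — that $\cK_{\cU_{21}}$ need not be a square for surface interactions — means that even the interaction term $\bPhi$, which you treat as given, is not unconditionally well-defined, so your step two already carries a hypothesis beyond what Section~\ref{s_sqrt} establishes.
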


The modified virtual structure sheaves $\tO_{\MM}$ and $\tO_{\PT}$ 
are discussed in Sections \ref{s_DT} and \ref{s_M}, respectively. 
The interaction term $\bPhi$ has an explicit dependence 
on variables $Q^{[C_i]} \in \Rep \TK(Z)$ where $C_i$ is a $\Cq$-invariant
curve discarded by the map $\pi_{\MM}$. 

\subsubsection{}

There are numerous advantages to formulating our conjectures are
a comparison of sheaves on the Chow variety. 

Most importantly, 
in this paper we make only partial progress towards constructing 
the sheaf $\tO_{\MM}$. However, there is a good understanding 
of it over a large open set in the Chow variety and the corresponding 
statement \eqref{conj1}
is highly nontrivial and may be subjected to many checks.

Further, the construction of the modified virtual 
structure sheaves $\tO_{\MM}$ and $\tO_{\PT}$ requires 
finding square roots of certain line bundles. For these 
square roots to exist globally, one may need to introduce an 
additional twist by a line bundle pulled back 
from the Chow variety, see Section \ref{pull_square}. The formulation 
\eqref{conj1} avoids these 
complications modulo a certain 
technical provision\footnote{In principle, it can happen
that the moduli of $\Cq$-orbits \emph{discarded} by the map $\pi_{\MM}$ do 
not admit a square root of the virtual canonical bundle, see the 
discussion in Section \ref{s_23inter}.}.

\subsection{Fields of 11-dimensional supergravity and degree zero DT counts}

\subsubsection{}

M-theory is a quantum theory of gravity which is believed to reduce, at low energies, to the eleven dimensional supergravity. 

In this paper, we mostly focus on the contribution of membranes to the 
M-theory index. There is also a contribution of supergravity fields to the index
which, in principle, is easier to determine because of its local 
nature. A conjectural connection between the field index and 
degree zero K-theoretic Donaldson-Thomas invariants was 
discovered in \cite{Zth, ZthJ}.  Since this paper is a 
natural development of the ideas of \cite{Zth}, we summarize them
briefly. 

\subsubsection{}

We consider M-theory on a manifold of the form 
\eqref{eSZ} in the Hamiltonian formulation and linearized around 
a certain vacuum configuration. This means that 
as our configuration space we take 
$$
\Conf = \frac{\{ \textup{global sections of bosonic fields on $Z$}\}}
{\textup{gauge equivalence}  \sim} \,,
$$
where the linearized bosonic fields of the eleven dimensional
supergravity are
a small perturbations
$$
{\delta}g \in \Gamma(S^2 T^*Z) \,,
$$
 of some background metric $g_0$
and the $3$-form
$$
A \in \Gamma(\Omega^3 Z) \,.
$$
At the linearized level, the gauge equivalence classes are the 
cosets by the image of the vectors fields and 2-forms on $Z$ 
that act by infinitesimal diffeomorphisms and 
$$
A \mapsto A + d \omega\,, \quad \omega \in \Gamma(\Omega^2 Z)\,,
$$
respectively. In particular, $\Conf$ is an infinite-dimensional 
linear space (even for compact $Z$, since neither sections nor 
bundles are holomorphic at this point). In addition one imposes, in canonical gravity, the invariance under the diffeomorphisms of the eleven-dimensional space-time manifold. After the Diff$(Z)$ invariance is imposed, there is one more constraint, the so-called Hamiltonian constraint, which is a second order differential-variational equation to be obeyed by the allowed sections of the appropriate line bundle over $\Conf$. Instead of trying to solve this constraint, for the purposes of enumerating the solutions, it is sufficient to restrict the class of metric perturbations. A convenient choice is to impose the traceless constraint on ${\delta}g$: 
$$
{\tr} g_{0}^{-1}{\delta}g = 0
$$
where we used the background metric $g_0$ to make an operator
$g_{0}^{-1}{\delta}g : TZ \mapsto TZ$.

The isometries of $g_0$ act on $\Conf$ 
by linear operators. 

\subsubsection{}

While questions of regularity of sections, boundary conditions etc.\
are of paramount physical importance, index computations are typically 
less sensitive to such issues and in the present discussion they will
be ignored entirely. Our computations will be formally 
modeled on the following basic example. 

Suppose $\Conf$ is a finite-dimensional real vector space with a linear 
action of a compact group $G$. In particular, $\Conf\cong \Conf^*$ as
a
$G$-module. Let $\mu$ be a $G$-invariant measure
on $\Conf$, which always exists. We can find a growing $G$-invariant 
function $f(x)$ such that the map 
\begin{equation}
  \label{poly_to_functions}
   \Sd \Conf \otimes_\R \C \owns p(x) \mapsto p(x) e^{-f(x)} \in L^2(\Conf,\mu)
\end{equation}
has a dense image. Neither side of \eqref{poly_to_functions}
has a well-defined $G$-character 
because of infinite multiplicities, but the degree grading on
polynomials allows to form the following series 
$$
\sum_{k\ge 0}  t^k \, \tr_{\bS^k \Conf} g  = \exp\left( \sum_{n\ge 1} \frac{t^n}{n} 
\, \tr_{\Conf} g^n \right)\,, \quad g\in G\,,
$$
which will replace for us the $G$-character of $L^2(\Conf,\mu)$. 

\subsubsection{}

The odd degrees of freedom are
\begin{equation}
\Psi \oplus \Psi^* = \{ \textup{global sections of fermionic fields on
  $Z$}\}
\Big/ \sim \,,
\label{odd_conf}
\end{equation}
where fermionic fields of M-theory are the Rarita-Schwinger fields
$\psi_{\pm}$ of 
spin 3/2. They transform in the representations
$$
V^{3/2}_{\pm} = \Ker \left( V \otimes S_{\pm} \to S_{\mp} \right) 
$$
of the group $Spin(V)$ where 
$$
V=T_{z_0} Z \cong \R^{10}\,,
$$
and $S_\pm$ are the spinor representations of $Spin(V)$, the universal cover of $SO(V)$. 
The Lie algebra of the gauge transformations is also extended
to include the transformations 
$$
\psi \mapsto \psi + \nabla \textup{spinor}  \,, 
$$
that change $\psi$ by a derivative of a spinor field. If we linearize
around the vanishing RS fields then the configuration superspace 
is a direct product of its even and odd subspaces.

\subsubsection{} 

Building the space $L^2(\Conf,\Ld \Psi^*)$ requires a choice of the
polarization in \eqref{odd_conf}. An important point, which will 
be revisited below, is that the two natural choices
$$
\Psi^* = \{ \textup{global $\psi_\pm$ on $Z$} \} 
$$
give dual and \emph{inequivalent} answers. For now, we fix one 
choice, namely $\psi_+$. Then the $\Iso(g_0)$-index of $L^2(\Conf,\Ld
\Psi^*)$ is the symmetric algebra of global sections of the 
following virtual bundle 
\begin{alignat}{2}
\underline{\mathit{Conf}_\textup{super}} = & S^2 T Z - 1 &\qquad & \textup{traceless
  metric}  \label{Conf_bundle}\\
& - TZ &\qquad & \textup{modulo diffeomorphisms} \notag \\
 & \Omega^3 Z - \Omega^2 Z + \Omega^1 Z - 1 &\qquad & \textup{3-form
   modulo exact} \notag \\
& -TZ\otimes S_+ + S_+ + S_- &\qquad & \textup{RS field modulo exact}
\notag
\,. 
 \end{alignat}

\subsubsection{}

Now suppose that $g_0$ is a K\"ahler metric and choose $G\subset 
\Iso(g_0)$ so that it acts trivially on the trivial bundle
$\Omega^{5,0} Z$. Then 
$$
S_{\pm} = \Lambda^{\textup{even/odd}} T^{1,0} Z
$$
as $G$-bundles. A direct computation with characters proves
the following key 

\begin{Proposition}[\cite{Zth}] For $G$ as above, we have 
$$
\underline{\mathit{Conf}_\textup{super}} = - T^{1,0} 
\otimes(S_{+}-S_{-}) 
$$
as $G$-bundles and so, by Dolbeault, 
$$
L^2(\Conf, \Ld\Psi^*) = \Ld 
\chi (TZ)  \,, 
$$
as virtual $G$-modules, where $\chi$ is the holomorphic Euler characteristic. 
\end{Proposition}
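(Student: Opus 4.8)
The plan is to reduce everything to a character computation on the fiber $V = T_{z_0}Z \cong \R^{10}$, viewed as a complex vector space $T^{1,0}$ under the Kähler structure, equipped with the $G$-action. Since all bundles appearing in \eqref{Conf_bundle} are associated to $T^{1,0}Z$ via functorial (representation-theoretic) constructions, it suffices to prove the identity of $G$-representations
\begin{equation}
S^2 V - 1 - V + \Ld V^* \cdot (\text{alternating, top four terms}) - V\otimes S_+ + S_+ + S_- = -V\otimes(S_+ - S_-),
\end{equation}
where I write $V = T^{1,0}Z$, $V^* = \Omega^1 Z$, $\Lambda^k V^* = \Omega^k Z$, and $S_\pm = \Lambda^{\mathrm{even/odd}} V$ as given in the Proposition's hypothesis. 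First I would record the complexified decompositions: the three-form line is $\Omega^3 - \Omega^2 + \Omega^1 - 1 = \Omega^3 Z - \Omega^2 Z + \Omega^1 Z - \Omega^0 Z$, which is $-\sum_{k=0}^{3}(-1)^k \Lambda^k V^*$; and the RS line is $-V\otimes S_+ + S_+ + S_-$. So the whole virtual bundle is $S^2 V - 1 - V - \sum_{k=0}^3 (-1)^k \Lambda^k V^* - V\otimes S_+ + S_+ + S_-$.

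Next I would use the elementary identity in the representation ring: $V\otimes V = S^2 V + \Lambda^2 V$, hence $S^2 V = V\otimes V - \Lambda^2 V$; I expect the $-\Lambda^2 V$ here to be the natural partner for the $\Lambda^2 V^*$ term once one remembers $V\cong V^*$ as a $G$-module (the $G$-action preserves the holomorphic symplectic-type pairing only on the determinant, but here the Kähler metric itself is $G$-invariant, giving $V\cong \bar V^* = V^*$ — I should double-check whether it is $V$ or $V^*$ that is needed and use the metric/Kähler-form identification accordingly). The key bookkeeping identity is the well-known Koszul-type relation
\begin{equation}
\sum_{k\ge 0}(-1)^k \Lambda^k V^* = \Ld V^* \Big|_{\text{alternating}} = \text{``}(1-V^*)\text{''} \quad\text{in the }\lambda\text{-ring sense},
\end{equation}
combined with $S_+ - S_- = \sum_k (-1)^k \Lambda^k V = \Lambda^{-1}(-V)$ evaluated formally, and $S_+ + S_- = \Ld V$. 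The cleanest route is: write everything as a polynomial identity in the $\lambda$-operations on the single class $[V]$, using $\Lambda^\bullet V = (S_+ + S_-)$, the alternating sum $\sum(-1)^k\Lambda^k V = S_+ - S_-$, and the truncation $\sum_{k=0}^{3}(-1)^k \Lambda^k V^*$ which I'd relate to $S_\mp$ via the Serre-type duality $\Lambda^k V^* \cong \Lambda^{5-k} V \otimes \det V^*$, noting $\det V^* = \Omega^{5,0}Z$ is $G$-trivial by hypothesis, so $\Lambda^k V^* \cong \Lambda^{5-k}V$ as $G$-modules. This last point is exactly where the hypothesis ``$G$ acts trivially on $\Omega^{5,0}Z$'' gets used, and it converts the three-form contribution into $-\sum_{k=0}^{3}(-1)^k\Lambda^{5-k}V = -\Lambda^5 V + \Lambda^4 V - \Lambda^3 V$ (the $j=5,4,3$ terms of $\sum_j(-1)^j\Lambda^j V$ up to an overall sign).

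After these substitutions the identity becomes a finite computation with the classes $\Lambda^j V$, $j=0,\dots,5$ (noting $\dim V = 5$), and I expect all terms to cancel against $-V\otimes(S_+-S_-) = -V\otimes\sum_j(-1)^j\Lambda^j V = -\sum_j(-1)^j V\otimes\Lambda^j V$, after using the standard identity $V\otimes\Lambda^j V = \Lambda^{j+1}V \oplus (\text{the ``interior-product'' complement})$ — more precisely the exact relation in the representation ring coming from the Koszul complex, $\sum_{j}(-1)^j V\otimes \Lambda^j V = 0$ is \emph{not} quite what is needed; rather one uses $V\otimes\Lambda^j V - \Lambda^{j+1}V = $ (the kernel of wedging), and the telescoping $\sum_j (-1)^j(V\otimes\Lambda^j V) = \sum_j(-1)^j\big(\Lambda^{j+1}V + \ker\big)$. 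The main obstacle, and the only genuinely non-formal step, is getting all the \emph{signs and the overall degree shift} right simultaneously — in particular tracking how the ``traceless'' $-1$, the ``modulo exact/diffeomorphism'' $-TZ$ and the $-\Omega^0$, and the two extra spinor terms $+S_+ + S_-$ conspire to leave precisely $-T^{1,0}\otimes(S_+-S_-)$ with no stray summands; I would verify this by a brute-force expansion in a weight basis (diagonalize a generic $g\in G$, write $V = \bigoplus_{i=1}^{5}\C_{x_i}$ with $\prod x_i = 1$) and check that $\prod(1+x_i)^2 \cdots$ — i.e. the generating-function form of both sides — agree identically as Laurent polynomials in $x_1,\dots,x_5$ subject to $\prod x_i = 1$. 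Once the fiberwise identity of $G$-representations holds, the Proposition's conclusion $L^2(\Conf,\Ld\Psi^*) = \Ld\chi(TZ)$ follows by applying the Dolbeault identification \eqref{indchiE} termwise (the symmetric algebra of global sections of $-T^{1,0}\otimes(S_+-S_-)$, with $S_\pm = \Lambda^{\mathrm{even/odd}}T^{1,0}Z$, is by definition $\Ld$ of the Dolbeault Euler characteristic of $TZ$), exactly as in the bundle-level discussion preceding the statement.
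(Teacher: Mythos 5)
Your high-level plan --- diagonalize a generic $g\in G$ on the fiber, reduce everything to a polynomial identity in the weights, and verify it --- is exactly what the paper's one-line ``direct computation with characters'' means, and it does work when set up correctly.  But as written, your reduction contains a substantive error that would cause the verification to fail.

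The gap is in the identification of the bundles.  The terms $S^2 TZ$, $-TZ$, and $\Omega^k Z$ in \eqref{Conf_bundle} are associated to the \emph{full real} tangent bundle of the $10$-real-dimensional $Z$.  After complexification, $TZ\otimes_\R \C = T^{1,0}\oplus T^{0,1}$, and the $G$-invariant K\"ahler metric identifies $T^{0,1}\cong V^*$ where $V=T^{1,0}$.  So the correct fiberwise reductions are
$$
S^2 TZ \;\leadsto\; S^2 V \oplus (V\otimes V^*) \oplus S^2 V^*, \qquad
TZ \;\leadsto\; V\oplus V^*, \qquad
\Omega^k Z \;\leadsto\; \Lambda^k (V\oplus V^*),
$$
not the $S^2 V$, $V$, $\Lambda^k V^*$ that you wrote.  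In particular $\Omega^3 Z$ has rank $\binom{10}{3}=120$, not $\binom{5}{3}=10$, so the identification $\Omega^k Z = \Lambda^k V^*$ (and the subsequent use of $\Lambda^k V^* \cong \Lambda^{5-k}V$ to turn the three-form line into $-\Lambda^5 V + \Lambda^4 V - \Lambda^3 V + \Lambda^2 V$) is incorrect.  The only pieces of $\underline{\mathit{Conf}_\textup{super}}$ that live purely in $\Lambda^\bullet V$ are the spinor bundles $S_\pm$, precisely because $\cK_Z^{1/2}$ is $G$-trivial.  Your proposed ``reduced identity'' is in fact false: writing $e_k=e_k(x_1,\dots,x_5)$ with $\prod x_i = 1$ (so $e_5=1$, $e_k(x^{-1})=e_{5-k}$), your version has character $e_1^2 - e_1 + e_2 + 2e_4 - e_1e_2 - e_1e_4$, while the right-hand side $-V\otimes(S_+-S_-)$ has character $-e_1\prod(1-x_i)=e_1(e_1-e_2+e_3-e_4)$, and these do not agree.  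The brute-force check you sketch at the end (``write $V=\bigoplus \C_{x_i}$ and compare $\prod(1+x_i)^2\cdots$'') perpetuates the same mistake, since it uses only the five weights $x_i$ instead of the ten weights $x_1,\dots,x_5,x_1^{-1},\dots,x_5^{-1}$.

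For reference, with the correct $10$-weight set-up the computation does close.  Using $e_k(y)$ for the elementary symmetric functions of $y=(x,x^{-1})$ one has $e_1(y)=e_1+e_4$, $e_2(y)=e_2+e_3+e_1e_4$, $e_3(y)=e_2+e_3+e_1e_3+e_2e_4$, and
$$
\bigl(e_1(y)^2 - e_2(y) - 1\bigr) - e_1(y) + \bigl(e_3(y)-e_2(y)+e_1(y)-1\bigr)
+ \bigl(1-e_1(y)\bigr)(1+e_2+e_4) + (e_1+e_3+1)
$$
simplifies exactly to $e_1(e_1-e_2+e_3-e_4) = -e_1\prod_i(1-x_i)$, the character of $-V\otimes(S_+-S_-)$.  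So the conclusion of the Proposition, and the final Dolbeault step you describe, are sound --- you just need to start from the right complexification of the real field content.
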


\noindent 
Note that changing the roles of $S_+$ and $S_-$ changes the answer to 
$$
 \overline{\Ld 
\chi  (TZ)} = 
\Sd \chi (T^*Z)\,.
$$
The conjectural formula of \cite{Zth} for degree 0 DT invariants is, 
suitably interpreted, the product of \emph{both} answers, that 
is $\Sd \chi (T^*Z-TZ)$, see Conjecture \ref{c_DT0} below. 

It would be interesting to have a good explanation of this
doubling phenomenon, which may be compared to the squaring that happens
in the degree 0 part of the correspondence between cohomological 
Gromov-Witten and Donaldson-Thomas invariants of 3-folds \cite{mnop}. 
We don't discuss it further in the present paper and refer the 
reader to the original paper \cite{Zth} for more information. 

\subsubsection{}
Suppose $Z$ is the total space of two line bundles $\cL_1$ and 
$\cL_2$ over a 3-fold $X$, such that $\cL_1 \otimes \cL_2 = \cK_X$. 
 Then, by localization, 
$$
\chi(Z,TZ) = \chi
\left(X,\frac{TZ\big|_X}{(1-\cL_1^{-1}) (1-\cL_2^{-1})}\right)
$$
and further
\begin{multline}
  \frac{T^*Z\big|_X- TZ\big|_X}{(1-\cL_1^{-1}) (1-\cL_2^{-1})} =\cK_X
  - \cO_X + \\
  \frac{\cL_1}{(1-\cL_1) (1-\cL_2^{-1})}
\left( TX + \cK_X - T^*X- \cK_X^*\right) \label{deg0class1} \,. 
  \end{multline}
Now note that if $\Cq$ acts on $\cL_1$ and $\cL_2$ with weights 
$q$ and $q^{-1}$ respectively, then all weights occurring in the 
second line of \eqref{deg0class1} are positive, and therefore 
the symmetric algebra of that term is well-defined in
$\Cq$-equivariant
K-theory. Discounting the contribution of the 
first term in the RHS of  \eqref{deg0class1} 
as a (possibly infinite) prefactor, we make contact with 
the following reformulation of a conjecture from \cite{Zth}. 

\subsubsection{}
Let 
$$
\Hilb(X,\textup{points}) = \bigsqcup_{n\ge 0} \Hilb(X,n) 
$$
be the Hilbert scheme of points of $X$ and consider
the following sheaf on it
$$
\tO_\textup{vir} = (-q)^{\chi(\cO/\cI)} \, \Ovir \otimes 
\left(\Kvir \otimes \det
H^0(\cO/\cI \otimes(\cL_1-\cL_2))\right)^{1/2}
$$
where the $\cO/\cI$ is the structure sheaf of the universal
$0$-dimensional subscheme of $X$. This is special case of the 
sheaf $\tO_\textup{vir}$ defined and discussed below, so we don't 
go into a further discussion of it here.

\begin{Conjecture}[\cite{Zth}]\label{c_DT0} For $X$ as above 
$$
\chi\left(\Hilb(X,\textup{points}),\tO_\textup{vir}\right) = 
\Sd \chi\left(X,\frac{q \cL_1 \left( TX + \cK_X - T^*X- \cK_X^*\right)}{(1-q\cL_1) (1-q\cL_2^{-1})}
\right)
$$
\end{Conjecture}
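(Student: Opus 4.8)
The plan is to prove Conjecture \ref{c_DT0} by combining the structural results about the sheaf $\tO_\textup{vir}$ on $\Hilb(X,\textup{points})$ with the localization reduction from the ambient 5-fold $Z$. First I would recall that by the general definition of $\tO_\textup{vir}$ (to be given in Sections \ref{s_DT} and \ref{s_M}), the left-hand side $\chi(\Hilb(X,\textup{points}),\tO_\textup{vir})$ is precisely the degree-zero part of the $K$-theoretic DT partition function of $Z = \mathrm{Tot}(\cL_1\oplus\cL_2)$, computed with the boxcounting variable $q = \Cq$. Since in degree zero the PT and Hilbert-scheme chambers agree and the Chow variety is a point, the right-hand side of Conjecture \ref{sconj1} collapses to the symmetric algebra $\bS_{\Chow}$ of the $\MM(Z)^{\Cq}$-contribution over the point, i.e.\ to $\Sd$ of the index of the M-theory supergravity fields localized to $X$. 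The point of the proof is thus to identify that supergravity-field index with the explicit expression on the right-hand side.

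The key computational step is Proposition \ref{s_Spinor}'s predecessor, the Proposition attributed to \cite{Zth}: for $G$ acting trivially on $\Omega^{5,0}Z$ one has $L^2(\Conf,\Ld\Psi^*) = \Ld\chi(TZ)$ as virtual $G$-modules, together with the dual choice of polarization giving $\Sd\chi(T^*Z)$. Taking the product of both polarizations (the ``doubling'' discussed after that Proposition) yields the relevant class $\Sd\chi(T^*Z - TZ)$. I would then feed in the localization identity
\begin{equation}
\chi(Z,T^*Z - TZ) = \chi\!\left(X,\frac{T^*Z|_X - TZ|_X}{(1-\cL_1^{-1})(1-\cL_2^{-1})}\right)
\end{equation}
and the explicit identity \eqref{deg0class1}, which rewrites the localized class as $\cK_X - \cO_X$ plus a term all of whose $\Cq$-weights are strictly positive once $\cL_1,\cL_2$ carry weights $q,q^{-1}$. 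Substituting $\cL_1 \mapsto q\cL_1$, $\cL_2^{-1}\mapsto q\cL_2^{-1}$ in the denominators and numerator exactly produces the argument $\frac{q\cL_1(TX+\cK_X-T^*X-\cK_X^*)}{(1-q\cL_1)(1-q\cL_2^{-1})}$ appearing inside $\Sd\chi$ on the right-hand side of Conjecture \ref{c_DT0}. The prefactor coming from $\Sd\chi(\cK_X - \cO_X)$ is the ``possibly infinite'' normalization discounted in the statement, so it is set aside as in the excerpt.

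What remains is to check that the twist $(-q)^{\chi(\cO/\cI)}\Ovir\otimes(\Kvir\otimes\det H^0(\cO/\cI\otimes(\cL_1-\cL_2)))^{1/2}$ defining $\tO_\textup{vir}$ on each $\Hilb(X,n)$ is precisely the one that makes the generating function over $n$ reproduce $\Sd$ of the above class rather than some rescaled or sign-twisted variant. Concretely, I would verify on a formal neighbourhood (or via virtual localization with respect to a torus acting on $X$, reducing to the toric/vertex computation) that the $K$-theoretic weight assigned to a length-$n$ subscheme matches term-by-term the $n$-th symmetric power of $\chi$ of the positive-weight class; the square-root twist by $\det H^0(\cO/\cI\otimes(\cL_1-\cL_2))$ is exactly what symmetrizes the otherwise-asymmetric contribution of $\cL_1$ versus $\cL_2$, which is why it appears. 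The main obstacle, as the excerpt itself flags, is that $\tO_\textup{vir}$ and the M2-brane side $\tO_{\MM}$ are only partially constructed in general; so a fully rigorous proof either needs $X$ toric (where everything reduces to a finite combinatorial vertex calculation and the square roots are explicit) or needs the square-root line bundle on $\Hilb(X,\textup{points})$ to be shown to exist and to be canonically trivialized — which is the content of Section \ref{s_sqrt} and is the genuinely delicate input. Granting that input, the identity is a character computation built from the Proposition of \cite{Zth} and \eqref{deg0class1}.
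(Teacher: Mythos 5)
Conjecture \ref{c_DT0} is stated in the paper as a \emph{conjecture}, and the paper offers no proof of it, only a heuristic derivation (via the localization identity \eqref{deg0class1} and the Proposition quoted from \cite{Zth}) and a one-box consistency check for $Z=\C^5$. Your proposal mislabels that heuristic as a proof, and moreover attempts to derive Conjecture \ref{c_DT0} from Conjecture \ref{sconj1}, which is also unproven; so even at best this would be a reduction of one open conjecture to another, not a proof.

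More substantively, the reduction itself does not go through. You assert that ``in degree zero the PT and Hilbert-scheme chambers agree,'' but the opposite is true: the PT moduli space in degree zero is a single point (this triviality is flagged in Section \ref{s1} as ``matching the trivial contribution of empty membranes''), while $\Hilb(X,\textup{points})$ is highly nontrivial. The discrepancy between the two chambers in degree zero is precisely the content of \eqref{ePTDT}. Consequently Conjecture \ref{sconj1}, which is a statement about $\PT(X)$ and M2-brane moduli, says nothing at all about $\chi(\Hilb(X,\textup{points}),\tO_\textup{vir})$: both sides of \eqref{conj1} are trivial in degree zero. Conjecture \ref{c_DT0} lives in a different physical sector entirely --- the paper places it in the section on ``Fields of 11-dimensional supergravity and degree zero DT counts,'' and the $\Sd\,\chi(\cdots)$ on the right-hand side is a Kaluza-Klein partition function of the supergravity multiplet on $Z$, not a localized membrane index. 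Your final paragraph correctly flags that the needed ingredients ($\tO_{\MM}$, square roots, etc.) are only partially constructed, but the earlier claim that ``granting that input, the identity is a character computation'' is too optimistic: an equivariant Euler characteristic over the singular scheme $\Hilb(X,n)$ for a general (not necessarily toric) 3-fold $X$ does not reduce to a character computation by any argument present in the paper. In short, there is no proof here to compare to; the statement is an open conjecture, and the proposal replaces the paper's careful ``conjecture'' label with an argument containing a factual error about the degree-zero PT moduli and a circular appeal to another conjecture.
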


We expect that the conjectural PT/DT correspondence
\cite{PT} extends to K-theoretic invariants as follows 
\begin{equation}
  \label{ePTDT}
  \chi\left(\PT(X),\tO_\textup{vir}\right)  \overset?= 
\frac{\chi\left(\Hilb(X,\textup{curves}),\tO_\textup{vir}\right)}
{\chi\left(\Hilb(X,\textup{points}),\tO_\textup{vir}\right)} \,. 
\end{equation}

\subsubsection{}
To get a sense what this means in concrete 
terms, take $Z=\C^5$ and let $t_1,\dots,t_5$ be the
weights of the coordinate directions. They satisfy 
\begin{equation}
  \label{prodt}
\prod t_i =1 \,.
\end{equation}
We have 
$$
\chi(Z,TZ^*) = \frac{\sum_1^5 t_i^{-1}}{\prod_1^5 (1-t_i^{-1})} \,,
$$
where the denominator may be symmetrized using \eqref{prodt}. 
Identity \eqref{deg0class1} says that 
\begin{multline}
\frac{\sum_1^5 t_i^{-1}-\sum_1^5 t_i}{\prod_1^5 (t_i^{1/2}-t_i^{-1/2})}
= \\ 
\frac{-(t_1 t_2 t_3)^{1/2}+(t_1 t_2 t_3)^{-1/2}}
{\prod_1^3 (t_i^{1/2}-t_i^{-1/2})}  \, + 
\frac{\prod_{i<j\le 3}  \left((t_i t_j)^{1/2}-(t_i t_j)^{-1/2}\right)}
{\prod_1^5 (t_i^{1/2}-t_i^{-1/2})}\label{id0cl}
\end{multline}
whenever \eqref{prodt} is satisfied.

 The first term in the left-hand
side of \eqref{id0cl} does 
not involve $t_4$ or $t_5$ and may be viewed as a perturbative
contribution to the integrals over the Hilbert schemes. 
The second term in the left-hand side of \eqref{id0cl}, which we
will denote $F(t)$, and in which one can already recognize the 
contribution of the Hilbert scheme of 1 point, computes the 
degree zero DT invariants as follows. 

The bundles $\cL_1$ and $\cL_2$ are trivial bundles with weights $t_4$
and 
$t_5$, respectively. Hence on the Hilbert scheme $\Hilb(\C^3,n)$
of $n$ points the line bundle 
$\det H^0(\cO/\cI \otimes(\cL_1-\cL_2))$ is trivial with weight $(t_4/t_5)^n=q^{2n}$. 
Therefore, Conjecture \ref{c_DT0} means 
$$
\sum_{n\ge 0} (-q)^n \chi\left(\Hilb(\C^3,n), \Ovir \otimes 
\Kvir^{1/2}\right) \overset?= 
\exp \left( \sum_{n=1}^\infty \frac{F(t^n)}{n} \right)
$$
with 
$$
t_4 = q^{1/2} (t_1 t_2 t_3)^{-1/2} \,, \quad 
t_5 = q^{-1/2} (t_1 t_2 t_3)^{-1/2} \,. 
$$
The fact that the right-hand side of \eqref{id0cl} has a full
5-dimensional symmetry is a very nontrivial confirmation of the 
M-theory paradigm.

\section{The DT integrand}\label{s_DT} 

\subsection{The modified virtual structure sheaf} 

\subsubsection{}

The DT moduli spaces of $X$, in their original definition \cite{Thom}, 
parameterize ideal sheaves $\cI \subset \cO_X$ of $1$-dimensional 
subschemes in $X$. They have a perfect obstruction theory described by 
\begin{align}
\Def - \Obs &= \chi(\cO_X) - \chi(\cI,\cI) \label{defI}
\\
 &= \chi(\cF) + \chi(\cF,\cO_X) - \chi(\cF,\cF) \notag\,,
\end{align}
where $\cF = \cO/\cI$ is the  universal $1$-dimensional sheaf on $X$. 

Other stability conditions for complexes of sheaves on $X$ lead to alternative 
DT moduli spaces. In particular, in the Pandharipande-Thomas chamber \cite{PT}, 
the moduli spaces parameterize pairs
$$
s: \cO_X \to \cF 
$$
where $\cF$ is a pure $1$-dimensional sheaf and the cokernel of the 
section $s$ has finite length. The formula for the $K$-theory class of
their obstruction theory is the same. 

\subsubsection{}

A perfect obstruction theory defines, in the usual way (see 
for example   
\cite{FG}), 
a virtual structure sheaf $\Ovir$. If, for instance,
 $p\in \PT(X)$ is an isolated
fixed point of a torus $\bT$ and 
$$
\Def_p - \Obs_p = \sum a_i - \sum b_i \,, \quad a_i,b_j \in \bT^\vee\,, 
$$
is the character of the deformation theory at $p$ then
the localization of $\Ovir$ at $p$ in $\bT$-equivariant 
$K$-theory equals
\begin{equation}
\cO_{\textup{vir},\textup{localized}} = 
\frac{\prod \left(1-b_i^{-1}\right)}
{\prod \left(1-a_i^{-1}\right)}  \,. 
\label{locOvir}
\end{equation}

\subsubsection{}

The 
virtual canonical bundle is defined by 
$$
\Kvir = \frac{\det \Obs}{\det \Def} \,. 
$$
Of particular importance to us will be its square root 
$\Kvir^{1/2}$, compare with Section \ref{s_Spinor}.

Different
choice of the square roots (related by the $2$-torsion in 
the Picard group) correspond to different boundary conditions 
for fermions in the theory. This means they define different 
sectors of the theory that have to be matched in concrete
computations. 

If $p$ is an isolated fixed point as in \eqref{locOvir} 
then 
\begin{equation}
\left(\Ovir \otimes \Kvir^{1/2} \right)_\textup{localized} = 
\frac{\prod \left(b_i^{1/2}-b_i^{-1/2}
\right)}{\prod \left(a_i^{1/2}-a_i^{-1/2}\right)}  \,. 
\label{locKvir}
\end{equation}

\subsubsection{}\label{s_Hodge} 
The twist by $\Kvir^{1/2}$ brings K-theoretic DT computations 
much closer to familiar sheaf cohomology problems. 

There is 
a certain degree of duality between deformations and obstructions 
in DT theory, with perfect duality in the case when $\cK_X$ 
restricts to the trivial bundle on the support of $\cF$. 
It is, therefore, useful 
to keep in mind the following baby example of a self-dual 
obstruction theory, which we will revisit below. 

Let $M$ be a smooth algebraic variety, viewed as zero section 
$$
s: M \to T^*M
$$
of its cotangent bundle. The corresponding obstruction theory is 
$$
\Def - \Obs = TM - T^*M
$$
with 
$$
\Ovir = s^*(\cO_M) = \sum_{k=0}^{\dim M} (-1)^k \Lambda^k TM \,.
$$
We have 
$$
\Kvir^{1/2} = \cK_M 
$$
and hence
\begin{equation}
\Ovir \otimes \Kvir^{1/2} = (-1)^{\dim M} 
\sum_{k=0}^{\dim M} (-1)^k \, \Omega^k M \,.
\label{OvirOmega}
\end{equation}%
If a torus scales the fibers of $T^*M$ with weight $t$ then 
it scales the $\Omega^k M$ term in \eqref{OvirOmega} with weight 
$t^{k-\dim M/2}$. Thus 
$$
\chi \left(\Ovir \otimes \Kvir^{1/2}\right) = 
\sum_{p,q} (-1)^{p-\dim/2} (-t)^{q-\dim/2} H^p(\Omega^q M)  
$$
is a specialization of the centered Hodge polynomial of $M$. 

\subsubsection{}

A sheaf $\cG$ on $X$ gives a line bundle 
$$
\bL_\cG 
= \det H^*(X,\cF \otimes \cG)
$$ 
on the DT moduli spaces, where 
$\cF$ is the universal $1$-dimensional sheaf, e.g. $\cF = \cO/\cI$
in the Hilbert scheme chamber. 

\subsubsection{} 

If $\cG$ is $1$-dimensional, then the degree of $\bL_\cG$ 
may be computed as follows. Let $B$ be $1$-dimensional family 
of sheaves and let $\cF$ denote  the corresponding sheaf on $B \times X$. 
Let 
$$
p_X: B \times X \to X 
$$
denote the projection and let 
$$
\sweep(B) = p_X \left(\cycle(\cF)\right)
$$
denote the 2-cycle in $X$ swept by the cycles of sheaves in $B$. 
From Grothendieck-Riemann-Roch, we have 
\begin{equation}
\deg_{B} \bL_\cG = \sweep(B) \, \cdot \, \cycle(\cG) \,. 
\label{degbL}
\end{equation}
Note that families with trivial sweep are precisely those 
contracted by the map to the Chow variety and, in fact, it 
can be shown \cite{JO} that the bundle $\bL_\cG$ is pulled back 
from the Chow variety. 

\subsubsection{} 

Symmetrically, for 1-dimensional $\cG$, 
the bundle $\bL_\cG\in \Pic \DT(X)$ depends
only on $\cycle(\cG)$ and, in fact, only on its rational 
equivalence class. 








\subsubsection{}

We now have prepared all ingredients for the definition of the 
modified virtual structure sheaf $\tO_{\DT}$. 

Let $X$ be the fixed locus of $\Cq$-action on a nonsingular Calabi-Yau 
5-fold $Z$ 
as above. Since $\Cq$ preserves the holomorphic 5-form $\Omega^5$, we
have 
$$
N_{X} Z = \cL_1 \oplus \cL_2
$$
where $\Cq$ acts with weights $q$ and $q^{-1}$, respectively. The 
triviality of $\cK_Z$ implies
$$
\cL_1 \otimes \cL_2 = \cK_X \,.
$$
The roles 
of line bundles $\cL_1$ and $\cL_2$ will \emph{not} be symmetric, reflecting 
the choice stressed in Section \ref{coneCq}: the $\cL_1$ direction 
is attracting as $q\to 0$, while the $\cL_2$ direction is repelling.

Given a $1$-dimensional sheaf $\cF$ on $X$, we denote by 
$$
\chi = \chi(\cF)\,, \quad \beta = \left[\cycle(\cF)\right] \in H_2(X,\Z)
$$
its discrete invariants. The virtual dimension of DT moduli spaces at 
a point corresponding to $\cF$ equals 
$$
\textup{vir dim} = - (\cK_X,\beta) = - (\cL_1+\cL_2,\beta) \,. 
$$
\begin{Definition}
We define 
\begin{equation}
  \label{deftODT}
  \tO_{\DT} = \textup{prefactor}  \, \, 
 \Ovir \otimes \left( \Kvir \otimes \bL_{\cL_1-\cL_2} \right)^{1/2}
\end{equation}
where 
\begin{align}
  \label{deftODT2}
  \textup{prefactor} &= 
(-1)^{\frac{(\cL_1-\cL_2,\beta)}{2}} \, (-q)^{-\frac{\textup{vir dim}}2+\chi} \, 
Q^\beta \\
&= 
(-1)^{(\cL_1,\beta)+\chi} \, q^{-\frac{\textup{vir dim}}2+\chi} \, 
Q^\beta \notag \,. 
\end{align}
\end{Definition}
Note that in the $\bL_{\cL_1-\cL_2}$ term we have the difference of 
$K$-classes and not the ratio $\cL_1 \otimes \cL_2^{-1} \in \Pic(X)$. 
There is a simple explanation for the this form of the DT 
integrand, see Section \ref{s_lowest_term}.

We will see in Section \ref{pull_square} that 
$$
\Kvir \otimes \bL_{\cL_1-\cL_2} = \textup{square} \otimes 
\bL_{c_1(\cL_1) \cap 
c_1(\cL_2)}\,, 
$$
where the second factor is pulled back from the Chow variety of $X$, 
as in the preceding discussion.  

\subsection{The interaction term $\bPhi$}

\subsubsection{$A_{r-1}$ surface fibrations}\label{sAr} 

For the discussion of the interaction between different components of 
$Z^{\Cq}$ it is convenient to keep in mind the following simplest 
example. Let 
$$
Z_1 = 
\begin{matrix}
\cL_1 \oplus \cL_2  \\
\downarrow\\
X
\end{matrix} \,, \quad \cL_1 \otimes \cL_2 = \cK_X \,,
$$
be the total space of two line bundles over $X$. Let 
$\Cq$ scale the fibers by $\textup{diag}(q,q^{-1})$ and 
let $\mu_r\subset \Cq$ be the group of $r$th roots of unity. 
Let 
$$
\xymatrix{
A_{r-1} \ar@{^{(}->}[r] & Z_r  \ar@{->}[d]\\
& X 
}
$$
be the minimal resolution of the quotient $Z_1\big/\mu_r$. It 
fibers over $X$ in $A_{r-1}$-surfaces
$$
A_{r-1} = \widetilde{\C^2/\mu_r}\,,
$$
that is, minimal resolutions of the the singularity $x^{r} = yz$. 
The quotient in 
$$
1 \to \mu_r \to \Cq \xrightarrow{\,\,\,
 q\mapsto q^r \, } \Ct_{q^r} \to 1 
$$
acts canonically on $Z_r$ and
$$
Z_r^{\Ct_{q^r}} = X \times \{ \textup{$r$ points}\}  \,. 
$$
In this example, we will see the rank $r$ Donaldson-Thomas
theory on $X$ appear from the interaction of rank $1$ 
theories on $r$ copies of $X$, see Section \ref{eng_Hr}. 

\subsubsection{Unbroken curves}

Going back to the general situation, let $C\subset Z$ be a reduced
connected 
$\Cq$-invariant curve. We say that $C$ is \emph{unbroken} if 
$\Cq$ acts nontrivially
on each component of $C$. This implies $C$ is rational, at worst nodal, 
and that the 
two branches at each node have opposite weights. It also implies it has
two nonsingular fixed points $p_1,p_2 \in C$ which lie on two different 
components $X_1$ and $X_2$ of the fixed locus. We say that $C$ \emph{flows
from $X_2$ to $X_1$} if the $\Cq$-weight of $T_{p_1} C$ is
positive. We denote by $\cU_{21}$ the moduli space of unbroken curves 
from $X_2$ to $X_1$. 

Since both spaces $T_{p_i} Z$ have three trivial $\Cq$-weights, there 
are two possibilities for the normal bundle to $C$, namely 
\begin{equation}
N_C Z = 
\begin{cases}
\cO(-p_1-p_2) \oplus \cO^{\oplus 3}\,, &\textup{or}\\
\cO(-p_1) \oplus \cO(-p_2) \oplus \cO^{\oplus 2} \,, 
\end{cases}\label{NCcases}
\end{equation}
as $\Cq$-equivariant sheaves. 
In the first case in \eqref{NCcases},
$$
\cU_{21} \cong X_1 \cong X_2 
$$
with the
obstruction bundle 
$$
H^1(C,\cO(-p_1-p_2)) \cong \cL_1\big|_{p_i} \otimes \cL_2\big|_{p_i} 
\cong 
\cK_{X_i} \big|_{p_i} \quad i=1,2 \,,
$$
where, as usual, we denote by $\cL_1$ and $\cL_2$ the $\Cq$-eigenbundles
in the normal bundle to the fixed locus. In the second
 case in \eqref{NCcases}, 
the deformations
are $2$-dimensional and unobstructed. The moduli space of unbroken 
curves embeds 
$$
X_2 \hookleftarrow \cU_{21} \hookrightarrow X_1 
$$
in each of the $X_i$ as a smooth surface. 

\subsubsection{Threefold and surface interactions}\label{s_23inter}

We will refer to the two cases in \eqref{NCcases} as the threefold and 
surface interactions, respectively. For example, there is a threefold 
interaction between any pair of fixed components in the 
example of Section \ref{sAr}. The corresponding unbroken curves 
are the $(-2)$-curves in the $A_{r-1}$-fibers. 

An example of a surface interaction may be constructed as follows. 
Take 
$$
Y = 
\begin{matrix}
\cO(-1)\oplus \cO(-1) \\
\downarrow\\
\pP^1
\end{matrix} 
$$
and make $\Cq$ act on $Y$ by scaling the base $\pP^1$ and so that
there is a trivial weight in the fiber over each fixed point. 
Instead of $Y$ we could have taken many other toric CY threefold
that contain the $\Cq$-invariant curve with an $\cO(-1)\oplus \cO(-1)$ 
normal bundle. Consider a $Y$-bundle over a surface $S$ associated to 
a principal $(\Ct)^3$-bundle $P$
$$
Z = P \times_{(\Ct)^3} Y\,, \quad c_1(P) = \cK_S
$$
We see that $\cK_Z \cong \cO_Z$, $X_1$ and $X_2$ are line
bundles over $S$,  and 
$$
\cU_{21} \cong S
$$
is embedded in each of them as the zero section. Since the surface $S$
is arbitrary, we conclude $\cK_{\cU_{21}}$ may not be 
a square.

\subsubsection{The operators $\Phi_{ij}$}

The obstruction theory 
$$
\Def - \Obs = H^*(C,N_C Z)
$$
gives $\cU_{21}$ a virtual structure sheaf of virtual dimension $2$. 
For 3-fold interactions, the corresponding virtual
canonical bundle is 
always a square with 
$$
\tO_{\cU_{21}} = \cK_{X_i}- \cO_{X_i} \,. 
$$ 
For surface interactions, we \emph{assume} that the square root in 
$$
\tO_{\cU_{21}}  = \cK^{1/2}_{\cU_{21}} 
$$
exists and we define, in either case, 
$$
\Phi_{21} = \eval_*  \tO_{\cU_{21}} \,, 
$$
where 
$$
\eval: \cU_{21} \to X_2 \times X_1
$$
sends an unbroken curve $C$ to the fixed points $(p_2,p_1)$. 
We will denote by the same symbol $\Phi_{21}$ the corresponding 
Fourier-Mukai operator 
$$
\cF_1 \xrightarrow{\,\,\Phi_{21}\,\,} p_{X_2,*} 
\left(\Phi_{21} \otimes
p_{X_1}^* \, \cF_1 \right) \,.
$$
For example, for 3-fold interactions 
$$
\Phi_{21} \, \cF_1 = \cK_X \otimes \cF_1 - \cF_1 \,. 
$$

\subsubsection{The interaction}\label{s_inter} 

Let $X_1$ and $X_2$ be two components of $X$ as above 
and let $\cF_1$ and 
$\cF_2$ denote the universal $1$-dimensional sheaves over the 
DT moduli spaces for $X_1$ and $X_2$. Using the operator $\Phi_{21}$, 
we can define the following $K$-theory class
\begin{equation}
\cX_{21} = \chi(\cO_{X_2}-\cF_2, \Phi_{21} (\cO_{X_1}-\cF_1)) \label{defX21}
\end{equation}
which may be compared to the formula \eqref{defI} for the 
virtual tangent space to DT moduli spaces. 

We define
\begin{equation}
\bPhi = \bigotimes_{i<j} \bS \, Q^{[C_{ij}]} \, \cX_{ji}\,,
\label{defbPhi}
\end{equation}
where $S$ denotes the symmetric algebra, 
$$
[C_{ij}] \in H_2(Z,\Z)
$$
is the class of the unbroken curve flowing from $X_j$ to $X_i$, 
and the indexing is the components is such that curves flow 
from larger components to smaller ones. 

\subsubsection{An example}

For 3-fold interactions, we have, using Serre duality 
\begin{equation}
  \cX_{21} =- 
\chi(\cI_2,\cI_1) - \overline{\chi(\cI_1,\cI_2)} \label{XA}  \\
\end{equation}
where $[\cI]=[\cO_X] - [\cF]$ and bar denotes dual. Such 
interaction terms occur naturally in higher rank DT theory, see 
Section \ref{e_rank} below. 

\subsubsection{Perturbative contributions}\label{s_pert} 

Note that the Euler characteristic \eqref{defX21} may not be
well-defined if $X_i$'s are not proper. However, the difference 
\begin{multline}
 \cX'_{21}= \cX_{21} - \chi(\cO_{X_2},\Phi_{21} \cO_{X_1}) =\\
 -\chi(\cF_2,\Phi_{21} \cO_{X_1})
- \chi(\cO_{X_2},  \Phi_{21}  \cF_1)
 +\chi(\cF_2, \Phi_{21} \cF_1)
\label{X21red}
\end{multline}
is well-defined and differs from \eqref{defX21} only by a constant, 
even if infinite-dimensional, vector space $\chi(\cO_{X_2},\Phi_{21}
\cO_{X_1})$. The character of its symmetric algebra 
$$
\bS \, Q^{[C_{12}]} \, \chi(\cO_{X_2},\Phi_{21}
\cO_{X_1})
$$
may be regularized using any of the traditional
approaches. From the point of view of DT theory on X, it comes 
out as an overall prefactor, also known as a perturbative
contribution.

\section{The index of membranes}\label{s_M} 

\subsection{Membrane moduli}

\subsubsection{Multiple curves} 

Recall that the moduli space $\MM(Z)$ of stable membranes in 
$Z$ is supposed to be a certain compactification of the moduli
space of immersed holomorphic curves $C\subset Z$. One such compactification 
is the moduli space of stable maps; compactifications using moduli
of sheaves on $Z$ may also be considered.  While it is entirely 
possible that the M2-brane contributions to the M-theory indexed may 
be calculated using such moduli spaces, in this paper we pursue 
an alternative route. 

The main geometric difficulty in dealing with holomorphic curves
is degeneration to multiple curves, e.g. the ellipse
$$
y^2 = \lambda (1-x^2)
$$
degenerating to the double line $y^2=0$ as $\lambda\to 0$. A physicist
may call a multiple curve a bound state  
of several M2-branes. In the moduli space of stable maps, the $\lambda\to 0$
limit is the double cover of the $y=0$ line branched over
the points $x=\pm 1$, which remember the branchpoints of the $x$-projection 
of the original ellipse. In the Hilbert scheme of curves, the limit would 
just be the subscheme of the plane cut out by $y^2=0$, with 
no memory of the shape of the original conic. 

One reason we don't try to construct membrane moduli using sheaves
on $Z$ or stable maps to $Z$ is that these don't give natural bounded
moduli spaces for given degree, recall the discussion of Section
\ref{Mbound}. For example, in the above example, there could be 
double covers of $y=0$ with an arbitrary large number of
branchpoints or this line may be the support of a sheaf with an arbitrary large
Euler characteristic.

\subsubsection{Maps from schemes}\label{s_mapsf} 

In this paper, we look at maps
\begin{equation}
f: C \to Z
\label{fCZ}
\end{equation}%
from $1$-dimensional \emph{schemes} $C$ to $Z$. In the above example, 
this would be just the inclusion of the double line. In general 
$f$ need not be injective, like in the case of an immersion of a 
smooth curve $C$. 

In practical terms, a map $f$ may be represented
by a subscheme
$$ 
C \subset Z \times \pP^N
$$
for some $N\gg 0$, with the map $f$ being the projection to the first 
factor. Using such presentation, one defines the normal sheaf to 
the map $f$ by 
$$
N_f = N_C \left(Z \times \pP^N\right) - \cO_C \otimes T \pP^N \,.
$$
Here 
$$
N_C \left(Z \times \pP^N\right) = \Hom (\cI_C, \cO_C)\,,
$$
where $\cI_C$ is the ideal sheaf of $C$ and $\cO_C$ is its structure
sheaf. 

When $C$ is nice, e.g.\ smooth or a local complete intersection, 
$N_f$ is a vector bundle of rank 
$4$ and degree $2g-2$. However, in general it can be much larger, 
reflecting the singularities of the moduli spaces of maps 
\eqref{fCZ}. Some strategies for dealing with large $N_f$ 
will be discussed below.

\subsubsection{Stability conditions}\label{s_stabil} 

We impose the following \emph{stability} conditions on the maps \eqref{fCZ}: 
\begin{enumerate}
\item[(1)] The map $f$ is an isomorphism on its image 
away from a finite set of points in $C$. 
\item[(2)] For any proper subscheme 
$C' \subset C$
\begin{equation}
\frac{\chi(\cO_{\C'})}{\deg f(C')} >  
\frac{\chi(\cO_{\C})}{\deg f(C)}
\label{slopestab}
\end{equation}
which means that $\chi(\cO_{\C'}) f(C) - \chi(\cO_{\C}) f(C')$ 
is a nonnegative and nonzero linear combination of the components of
$\supp f(C)$. 
\end{enumerate}

\noindent 
For example, a double line $C\subset \pP^2$ is stable since 
$\chi(\cO_{\C}) = 1$ and $\chi(\cO_{\C'}) \ge 1$ for any subscheme of $C$.
 
More generally, let $C$ be double zero section inside the total space 
of line bundle $\cL$ over a curve $B$. Then 
$$
\chi(\cO_C) = 2 \chi(\cO_B) - \deg \cL 
$$
and so \eqref{slopestab} means 
$$
\textup{$C$ is stable} \,\, \Leftrightarrow \,\, \deg \cL > 0 \,. 
$$
In other words, membranes can only stack up in \emph{positive direction} 
of the normal bundle. 

\subsubsection{CM  property} 

A $1$-dimensional scheme is Cohen-Macaulay if for every point $x\in C$ 
there is a function $f$ vanishing at $x$ which is not a zero-divisor.
 
If this condition is violated at some point $x\in C$ then  
$$
\cI_{C'}=\Ann \fm_x \subset \cO_C\,,
$$
where $\fm_x$ is the ideal of functions vanishing at $x$, is a 
nontrivial ideal of finite length. Thus
$$
\cO_{C'} = \cO_{C} / \cI_{C'} 
$$
is a proper subscheme with 
$$
\chi(\cO_{C'}) < \chi(\cO_{C})\,, \quad [f(C')] =  [f(C)] \,. 
$$
Therefore, the sources $C$ of all stable maps \eqref{fCZ} are 
Cohen-Macaulay. 

Maps from 1-dimensional 
Cohen-Macaulay schemes to projective varieties
were studied by H{\o}nsen 
in \cite{Hon}, who constructed their moduli space as a proper separated
algebraic space for given $\deg f(C)$ and $\chi (\cO_C)$. 
He imposes the first, but not the second stability condition 
in Section \ref{s_stabil}. 

\subsubsection{Boundedness}

For any  map \eqref{fCZ}, the Euler characteristic $\chi(\cO_C)$ 
may be bounded \emph{below} in terms of the degree of $f(C)$. 
Stability \eqref{slopestab} also bounds it from \emph{above}. 
Therefore, stable maps \eqref{fCZ} form a bounded 
family once the degree of the map is fixed. 

This is natural from the M-theory perspective. In M-theory there 
is a 3-form which couples to the worldvolume 
$C \times S^1$ of the membrane and thus keeps track of its degree. 
On the other hand, there are no fields that couple to the Euler
characteristic of $C$ and, besides, the Euler characteristic 
of  $C \times S^1$ vanishes, as it does for any smooth real 3-fold. 

This means that on the membrane side of our conjectures, we 
sum over all Euler characteristics of membranes with \emph{no weight}. 
The $q^\chi$-weight on the DT side appears only 
because of the $\Cq$-action, the existence of which is an additional 
hypothesis on $Z$. 

\subsection{Deformations of membranes} 

\subsubsection{}

When the normal sheaf $N_f$ becomes too big, the deformation 
theory of a map \eqref{fCZ} becomes very complicated and technical. 
Perhaps some form of a virtual structure sheaf may be constructed
from the normal complex of $f$. At this time, however, we are planning 
to pursue a more geometric approach, namely to take as $\MM(Z)$ a 
certain \emph{virtual Nash blowup} of the H{\o}nsen space. 

Recall that the ordinary Nash blowup of a singular space $\cM$
remembers the limits of tangent spaces at the smooth points 
$\cM_0 \subset \cM$ as they approach the singularities. A point 
of Nash blowup of $\cM$ is described by a pair $(p,N)$, where 
$p\in \cM$ and 
$$
N \subset T_p \cM \,, \quad \dim N = \dim \cM_0 \,.
$$

\subsubsection{}

Our hypothetical virtual blowup $\MM(Z)$ 
of the H{\o}nsen space should parameterize 
maps \eqref{fCZ} together with a subsheaf 
\begin{equation}
N \subset N_f\label{NinNf}
\end{equation}
of class 
$$
[N] = 3 \left[\cO_C\right] + \left[\omega_C \right] 
$$
in $K$-theory of $C$, where $\omega_C$ is the dualizing sheaf of a 
Cohen-Macaulay scheme $C$. 
Additional conditions on $N$ form a 
subject of current research and will be discussed separately. 

A possible physical interpretation of the extra data 
contained in \eqref{NinNf} is the following. The map 
$f: C\to Z$ is really the bosonic part of a map of 
\emph{superschemes}, the fermionic part of which 
is uniquely reconstructed in the case when $f$ is an 
immersion or a more general l.c.i.\ map. The 
uniqueness of the reconstruction fails when $f$ develops
singularities and the subsheaf \eqref{NinNf} stores the 
missing information. 

\subsubsection{}

With these additional conditions, we hope $\MM(Z)$ to have 
an obstruction theory with 
$$
\left[ \Def - \Obs \right] = \left[H^*(N)\right]
$$
in $K$-theory of $\MM(Z)$. We don't expect these virtual bundles
to be isomorphic, it is only their pieces with respect to a certain
filtrations that should be identified.

\section{Examples}
\label{s_exa}

\subsection{Reduced local curves}\label{s_slc} 

\subsubsection{}

Let $Z$ be the total space of 4 line bundles 
$$
Z = 
\begin{matrix}
\cL_1 \oplus \cL_2 \oplus \cL_3 \oplus \cL_4 \\
\downarrow\\
C
\end{matrix} \,, \quad \bigotimes \cL_i = \cK_B \,,
$$
over a smooth curve $C$. As before, we make $\Cq$ act on $\cL_1$ and 
$\cL_2$ with weights $q$ and $q^{-1}$ and hence $X$ is the total space
of $\cL_3\oplus \cL_4$. We want to compare the DT and M-theoretic 
counts for the zero section $C$ inside $Z$. 

\subsubsection{}

A 3-dimensional torus
$\bT$ acts on $Z$ scaling the individual $\cL_i$'s. Clearly, 
$$
\MM(Z,[C])^\bT = \{0\}
$$
is a point representing the curve $C$ itself. We have 
$$
\left(\Def^{\MM}-\Obs^{\MM} \right)\Big|_{0} = H^*(N_C Z) \,, \quad N_C Z = 
\bigoplus_{i=1}^4 \cL_i \,. 
$$
Therefore 
$$
\chi\left(\MM,\tO_\textup{vir}\right) = \left(\det H
\right)^{-1/2}  \Sd  H^\vee \,, \quad H=H^*(N_C Z)\,. 
$$
In practice, this means that if $\sum a_i - \sum b_i$ is the character 
of $H^*(N_C Z)$ then 
\begin{equation}
\chi\left(\MM,\tO_\textup{vir}\right) = 
\prod \frac{b_i^{1/2} -
  b_i^{-1/2} }
{a_i^{1/2} -
  a_i^{-1/2} } \,. \label{prodm}
\end{equation}

\subsubsection{}

For comparison with DT theory, we need to expand \eqref{prodm} in 
powers of $q$. It is convenient to separate the $\Cq$-moving directions 
$$
N_{12} = \cL_i \oplus \cL_j\,, \quad H_{12} = H^*(N_{12}) \, ,
$$
and their contribution 
$$
\chi(\MM,\tO_\textup{vir})_{12} = \textup{contribution of
  $N_{12}$}
$$
to \eqref{prodm}. 
We compute 
\begin{multline}
\chi(\MM,\tO_\textup{vir})_{12}= 
(-1)^{h_1} q^{\frac{h_1 + h_2}2} \otimes \\ 
(\det H^*(\cL_1-\cL_2))^{1/2} \,\,  \Sd \left(q H^*(\cL_1) \oplus
q H^*(\cL_2)^\vee\right) \,, 
 \label{Minq} 
\end{multline}
where 
$$
h_i = \rk H^*(\cL_i) = \deg \cL_i + 1 - g(C) \,.
$$
In particular 
\begin{equation}
\chi(\MM,\tO_\textup{vir})_{12} = 
(-1)^{h_1} q^{\frac{h_1 + h_2}2} \, 
(\det H^*(\cL_1-\cL_2))^{1/2} \, (1+O(q)) \,,
 \label{Minql} 
\end{equation}
as $q\to 0$. 

\subsubsection{} 

On the Donaldson-Thomas side, we have 
$$
\PT(X,[C])^\bT = \bigsqcup_{n\ge 0} S^n C \,. 
$$
The deformation theory of these spaces consists of  deforming 
$C$ in the $N_{34}$-direction and a certain twisted cotangent 
bundle on $S^nC$, see below. In particular, the contribution of 
$H_{34}$ to PT counts is precisely $\chi(\MM,\tO_\textup{vir})_{34}$. 

Recall that by definition \eqref{deftODT}
$$
\tO_{\DT}  = \textup{prefactor}  \, \, 
 \Ovir \otimes \left( \Kvir \otimes \det H^*(\cL_1-\cL_2)
 \right)^{1/2}
$$
where, dropping the constant $Q^C$ term, 
\begin{align}
  \textup{prefactor}
&= 
(-1)^{(\cL_1,\beta)+\chi} \, q^{-\frac{\textup{vir dim}}2+\chi} \, 
\notag \\
&= 
(-1)^{h_1+n} \, q^{\frac{h_1+h_2}2+n} \, 
\label{PTinql}
\end{align}
because $\chi(\cF)=n+1-g(C)$ for sheaves $\cF$ parameterized by $S^n C$ 
and 
$$
- \textup{vir dim} = \deg \cK_X = \deg \cL_1 + \deg \cL_2  \,. 
$$

\subsubsection{}\label{s_lowest_term} 

The lowest term in the $q$-expansion corresponds to $n=0$. Comparing 
\eqref{Minql} to \eqref{PTinql} we find a perfect agreement. In fact,
  we see that the form of the prefactor \eqref{deftODT2} is dictated
by the lowest $q$-term for reduced local curves. 

\subsubsection{}

For $n>0$ there is a nontrivial obstruction bundle on $S^n C$. 
When $\cK_X$ is trivial, that is, when 
$$
\cL_3 \otimes \cL_4 = \cK_C 
$$
this is the cotangent bundle to $S^n C$ by the duality between 
deformations and obstructions. In general, it is a certain twisted
version of $T^* S^n C$. 

Let 
$$
\Delta \subset S^n C \times C
$$
be the universal subscheme. Recall \cite{ACGH} that 
\begin{equation}
T^* S^n C = (p_1)_* \, \cO_\Delta \otimes p_2^*(\cK_C)\,,\label{TSn}
\end{equation}
where $p_i$ are the projections to the two factors.
More generally, 
$$
\Obs =   (p_1)_* \, \cO_\Delta \otimes p_2^*(\cL_3 \otimes \cL_4 )\,. 
$$
We note that the formula
$$
\boxed{\chi\left( \oSd C, \tO_{\DT} \right) = \left(\det H
\right)^{-1/2}  \Sd  H^\vee\,,} \qquad H=H^*(\oplus \cL_i )\,, 
$$
is a generalization of a classical formula of Macdonald for the 
Hodge numbers of symmetric powers of a curve. Presumably, 
it has an elementary proof.

\subsection{Double curves}

\subsubsection{}
Let $\cL$ be a line bundle on a smooth curve $B$ and let 
$S_\cL$ be 
the total space of this line bundle. If $z$ is the local coordinate
along the fibers of $S_\cL$ then 
$$
\bB_\cL = \{z^2=0\} \subset S_\cL
$$
is the infinitesimal thickening of the base $B$ in the fiber
direction. We have 
$$
\cO_{\bB_\cL} = \cO_{B} \oplus \cL^{-1}
$$
as $\cO_B$-module and, in particular,
$$
\chi(\bB_\cL) = 2 \chi(B) - \deg \cL \,.
$$
The normal bundle to $\bB_\cL$
$$
N_{\bB_\cL} S_\cL = \cL \oplus \cL^2 
$$
may be seen concretely as deformations of the form 
$$
\{ z^2 + p_1 z + p_2 = 0\}\,, \quad p_i \in \Gamma(\cL^i) \,.
$$
A very familiar example, in which there is no $H^1$ of the normal 
bundle, 
is the deformations of the double line to a conic in $\pP^2$. 

\subsubsection{}
Let $D\subset B$ is an effective divisor of degree $d$ and let
$$
s_D \in H^0(B,\cO(D))
$$
be the tautological section. It defines a map 
$$
F_D: S_{\cL(-D)} \owns (b,z) \mapsto (b,s_D(b) z) \in S_\cL 
$$
where $(b,z)$ are the base and the fiber coordinates in the domain 
of $F_D$.  

The map $F_D$ is the blowup of $S_\cL$ in the subscheme $D\subset
S_\cL$. Its deformations have the form 
$$
\Def(F_D)= T_{D} \Hilb(S_{\cL},d) = T_D \, S^d B + H^0(B,\cO_D
\otimes \cL)
$$
and they are unobstructed. We already saw the tangent space 
to the symmetric power $S^dB$ of a curve $B$ in \eqref{TSn}.

\subsubsection{}
Now let $Z$ be the total space of 4 line bundles 
$$
Z = 
\begin{matrix}
\cL_1 \oplus \cL_2 \oplus \cL_3 \oplus \cL_4 \\
\downarrow\\
B
\end{matrix} \,, \quad \bigotimes \cL_i = \cK_B \,,
$$
over a smooth curve $B$, and let us look for 
$\bT$-invariant stable membranes in 
the class 
$$
[C] = 2 [B] \,. 
$$
Here $\bT\cong (\Ct)^3$ is the torus scaling the fibers with
determinant 1. 

\subsubsection{}
We will make the simplifying assumption that 
$$
\deg \cL_1 > 0 \ge \deg \cL_i\,,\quad i=2,3,4 \,,
$$
in which case $C$ can only double in the direction of $\cL_1$ as 
discussed in Section \ref{s_stabil} and all $\bT$-invariant
stable membranes have the form 
\begin{equation}
f_D : \,\, \bB_{\cL_1(-D)} 
\hookrightarrow S_{\cL(-D)}  
\xrightarrow{\,\,\, F_D \,\,\,} S_{\cL_1} \hookrightarrow Z\,, \label{f_D}
\end{equation}
where $D\in S^d B$ is an effective divisor of degree 
$$
0 \le d < \deg \cL_1 \,. 
$$
This range is restricted by the stability condition 
$
\chi\left(\bB_{\cL_1(-D)} \right)< 2  \chi(B) 
$. 

\subsubsection{}
The deformation theory of the map \eqref{f_D} may be described 
as follows 
$$
\Def(f_D) - \Obs(f_D) = \Def(F_D) + H^*\left(B,N_{\bB_{\cL_1(-D)}} S_{\cL_1(-D)}
+ f_D^* N_{S_{\cL_1}} Z\right)
$$
where
\begin{align*}
 N_{\bB_{\cL_1(-D)}} S_{\cL_1(-D)}  &= \cL_1(-D) +  \cL_1^2(-2D)\,,\\
f_D^* N_{S_{\cL_1}} Z &= (\cO + \cL_1^{-1}(D)) \otimes (\cL_2
+\cL_3+\cL_4) \,. 
\end{align*}

\subsubsection{}
The corresponding membrane integrals are particularly easy to 
compute for $B=\pP^1$ as then one can use the extra torus action 
on the base. They may be compared to the corresponding degree 2
PT integrals, which can also be computed by localization. As usual, 
there is, in fact, more than one PT check, as different 
tori may be designated as $\Cq$.

\subsection{Single interaction between smooth curves} 

\subsubsection{} 

Let $X_i$ be the components of $X=Z^{\Cq}$ and let 
$C_i \subset X_i$ be a collection of smooth reduced curves, possibly 
empty, in each component. 
As in Figure \ref{f_connectors}, let
 $f: C \to Z$ be a $\Cq$-invariant stable membrane such that 
$$
\pi_{\MM} (f(C)) \subset \sum \left[C_i\right]
$$
where the map $\pi_{\MM}$ 
of Section \ref{s_pi_DTM} keeps only those components that are fixed 
point-wise.  

We denote by $C'$ the other components of $C$ and 
focus here on the case when, unlike the situation depicted in 
Figure \ref{f_connectors},  $C'$ is closure of a single $\Cq$-orbit 
that flows from $p_2 \in X_2$ to $p_1\in X_1$. The general case, when $C'$ may be 
reducible or nonreduced, is expected to be covered by taking the 
symmetric algebra in \eqref{defbPhi}. 

\subsubsection{} 

There are 4 possible cases, namely 
$$
C = 
\begin{cases}
C' \,, \\
C' \cup C_1 \,, \\
C' \cup C_2 \,, \\
C' \cup C_1 \cup C_2 \,, 
\end{cases}
$$
corresponding to the 4 terms in the expansion of \eqref{defX21}. We
consider the last, most interesting case, assuming $C_1 \ne
\varnothing \ne C_2$.  We denote by 
$$
\Delta N =   N_CZ  - \big(N_{C'}Z + N_{C_1}Z + N_{C_2}Z \big) 
$$
the difference between the normal bundle to $Z$ and the normal 
bundles of its components. It may described as follows 
\begin{align}
\Delta N =  & \, 
 T_{p_1} C' \otimes T_{p_1} C_1  + T_{p_2} C' \otimes T_{p_2} C_2 
\notag \\
& - T_{p_1} Z + T_{p_1} C' + T_{p_1} C_1 \notag \\
& - T_{p_2} Z + T_{p_2} C' + T_{p_2} C_2 \label{NNC} \,,
\end{align}
where the first line corresponds to smoothing of 
the two nodes, while the second line is the condition of 
preserving the node at $p_1$ if it is not smoothed. 

\subsubsection{}

The contribution of $\Delta N$ to $\tO_\MM$ equals 
\begin{align*}
  \left(\det \Delta N \right)^{-1/2} \Sd \Delta N^\vee & = \Sd \left( -
    N^\vee_{C_1} X_1 \Big|_{p_1}- N_{C_2} X_2 \Big|_{p_2} \right) +
  O(q) \\
&= \eval^*\left(\cO_{C_2}^\vee \boxtimes \cO_{C_1} \right) + O(q) \,, 
\end{align*}
as $q\to 0$. We thus see that the form of the interaction described in
Section \ref{s_inter} is dictated already by the lowest $q$-term in 
the simplest interacting geometry.

\subsection{Higher rank DT counts}\label{e_rank} 

\subsubsection{}

By analogy with PT moduli spaces, one may consider $1$-dimensional 
sheaves $\cF$ with $r$ sections, that is, complexes of the form 
\begin{equation}
\cO_X^{r} \xrightarrow{\,\,s\,\,} \cF\,,
\label{PTr} 
\end{equation}
subject to the same stability conditions. They have a natural action
of $GL(r)$ by automorphisms of $\cO_X^r$. 

In contrast to the 
case $r=1$, the deformations of \eqref{PTr} for $r>1$ generally lead
to complexes not of the form \eqref{PTr}. This is a well-known 
phenomenon even if $X$ is a surface, where the points of the form
\eqref{PTr}   in the moduli space of all framed torsion-free sheaves
$\cG$ correspond to torsion-free sheaves $\cG=\Ker s$ with $\cG^{\vee \vee} \cong
\cO_X^r$, in other words, to instantons of zero size. 

\subsubsection{}

While constructing a proper moduli space,
with an $GL(r)$-action, that contains the deformations of \eqref{PTr}
is certainly an interesting problem with many potential applications, 
this problem remains currently open even for the simplest surface
$\C^2$. 

Instead, here we take a pragmatic approach and define 
higher-rank PT invariants by localization with respect to the 
maximal torus $\bA\subset GL(r)$. The corresponding fixed loci 
are direct sums 
\begin{equation}
\cO_X^{r} \xrightarrow{\,\,\oplus s_i \,\,} \cF = \bigoplus \cF_i \,,
\label{PTrs}
\end{equation}
and thus $r$-fold products of PT moduli spaces of $X$, with the
natural 
direct sum obstruction theory. To account for 
modification required in rank $r$, we define 
\begin{equation}
  \label{deftODTr}
  \tO_{\DT,r} = \textup{prefactor}  \, \, 
 \Ovir \otimes \left( \Kvir \otimes \bL^{\otimes r}_{\cL_1-\cL_2}
 \right)^{1/2}
\otimes \textup{cross-terms} \,. 
\end{equation}
The form of the prefactor changes to 
\begin{equation}
  \label{deftODT2r}
  \textup{prefactor} = 
(-1)^{(r \cL_2 + \cK,\beta)+r \chi} \, q^{(\beta,\cK)+\chi} \, 
Q^\beta \notag \,,
\end{equation}
where $\beta = \ch_2(\cF)$ and  $\chi=\chi(\cF)$. 

\subsubsection{}
The cross-terms in the deformation theory of \eqref{PTrs} decompose 
according to the weights of $\bA$, the term 
\begin{equation}
N_{ji} = \chi(\cF_i) + \chi(\cF_j^\vee) - \chi(\cF_j,\cF_i) \label{Nji}
\end{equation}
having the weight $a_i/a_j$. We have 
$$
\textup{cross-terms} = \bigotimes_{i\ne j} \tSd N_{ji} 
$$
where for a K-theory class $V$, we set, for brevity 
$$
\tSd V  =\left( \det V \right)^{-1/2} \otimes \Sd V^\vee  \,. 
$$
The argument of 
Section \ref{pull_square} is modified easily to show that the square 
root in \eqref{deftODTr}, including the square root present in the 
cross-terms, is well-defined modulo line bundles pulled back from 
the Chow variety.

\subsection{Engineering higher rank DT theory}\label{eng_Hr}

\subsubsection{} 

Let $Z_r$ be an $A_{r-1}$-surface fibration over $X$ as in Section
\ref{sAr}.  We label the components 
$$
Z^{\Cq} = X_1 \sqcup X_2 \sqcup \dots \sqcup X_r 
$$
of the fixed locus so that the unbroken curves flow from larger indices to smaller. 
With such labeling 
$$
N_{X_i} Z = \cL_1^{r} \cK^{i-r} \oplus \cL_2^{r} \cK^{1-i} \,, \quad
i=1,\dots, r\,, 
$$
where $\cK = \cK_X$. These have $\Cq$-weights $(q,q^{-1})$ by our
convention, although this $q$ is the $r$th power of the variable that 
originally acted on $Z_1$ before the quotient and the resolution. 

See Figure \ref{f_Z4} for a schematic representation of the geometry
of $Z_r$. 

\subsubsection{}
Our goal in this section is to prove the following

\begin{Proposition}\label{p_eng}
Assuming Conjecture \ref{sconj1}, the M2-brane index of $Z_r$ equals 
the rank $r$ Donaldson-Thomas partition function of $X$. 
\end{Proposition}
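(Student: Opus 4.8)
The plan is to apply Conjecture \ref{sconj1} to the specific geometry $Z_r$ of Section \ref{sAr} and to identify both sides with objects in rank $r$ DT theory. On the membrane side, $\pi_{\MM,*}\,\iota_*^{-1}\,\tO_\MM$ is, by definition, a sum of contributions over the $r$ fixed components $X_1,\dots,X_r$ together with all the unbroken curves interpolating between them. The $\bS_{\Chow}$ operation then builds in the symmetric-algebra structure over the Chow variety. On the DT side, the right-hand side of \eqref{conj1} reads $\pi_{\PT,*}(\tO_\PT\otimes\bPhi)$, where now the DT moduli space is $\prod_{i=1}^r \PT(X_i)$ and $\bPhi=\bigotimes_{i<j}\bS\,Q^{[C_{ij}]}\,\cX_{ji}$, assembled from the pairwise interactions $\cX_{ji}$ of \eqref{defX21}. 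So the proof is really an exercise in recognizing the data $(\prod_i\PT(X_i),\ \tO_\PT,\ \bPhi)$ as precisely the fixed-point data for rank $r$ PT theory on $X$ under localization with respect to the maximal torus $\bA\subset GL(r)$, as set up in Section \ref{e_rank}.

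First I would compute the normal bundles $N_{X_i}Z_r = \cL_1^r\cK^{i-r}\oplus\cL_2^r\cK^{1-i}$ recorded above and check that the prefactor attached to $X_i$ in $\tO_{\DT}$ of \eqref{deftODT} — which depends on $(\cL_1|_{X_i},\beta)$, $\chi$, and $\textup{vir dim}$ — matches, after accounting for the extra twist by powers of $\cK$, one summand of the rank $r$ prefactor \eqref{deftODT2r} with its $(-1)^{(r\cL_2+\cK,\beta)+r\chi}q^{(\beta,\cK)+\chi}$. Next I would identify the interactions. The unbroken curves in $Z_r$ are the $(-2)$-curves in the $A_{r-1}$-fibers, which are all threefold interactions (Section \ref{s_23inter}); hence by the computation in Section \ref{s_inter} each $\cX_{ji}$ is exactly the expression \eqref{defX21} with $\Phi_{ji}$ the threefold-interaction operator $\Phi_{ji}\cF_i=\cK_X\otimes\cF_i-\cF_i$. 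Comparing with \eqref{Nji}, i.e.\ $N_{ji}=\chi(\cF_i)+\chi(\cF_j^\vee)-\chi(\cF_j,\cF_i)$, and using Serre duality on the Calabi-Yau-in-fibers directions (this is the content of \eqref{XA}), one sees $\bS\,Q^{[C_{ij}]}\cX_{ji}$ is precisely the cross-term factor $\tSd N_{ji}$ appearing in \eqref{deftODTr}, up to the square-root normalization $\tSd V=(\det V)^{-1/2}\otimes\Sd V^\vee$ and line bundles pulled back from the Chow variety. The bookkeeping of the $\bL^{\otimes r}_{\cL_1-\cL_2}$ twist and the square roots is handled exactly as indicated at the end of Section \ref{e_rank}, by the modified version of the argument of Section \ref{pull_square}.

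With these identifications in hand, the two sides of \eqref{conj1} for $Z=Z_r$ become, respectively, (i) the M2-brane index of $Z_r$, and (ii) the $\bA$-fixed-point localization contribution to rank $r$ PT theory of $X$, summed over all $r$-tuples of PT data on $X$ and weighted by the cross-terms. Since rank $r$ PT invariants of $X$ are \emph{defined} in Section \ref{e_rank} by exactly this $\bA$-localization sum, the right-hand side is by definition the rank $r$ DT partition function of $X$ (in the PT chamber), and invoking \eqref{ePTDT} if one prefers the Hilbert-scheme normalization. This establishes the Proposition.

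The main obstacle is the matching of the prefactors and, more delicately, of the square roots: one must check that the $q$-powers, signs, and the auxiliary twists by $\cK_X$-powers that enter through $N_{X_i}Z_r$ conspire to turn the $r$ separate rank $1$ modified virtual structure sheaves $\tO_{\DT}$ on the $X_i$ into a single rank $r$ sheaf $\tO_{\DT,r}$ with the prefactor \eqref{deftODT2r}, rather than merely up to an undetermined character. The sign $(-1)^{(r\cL_2+\cK,\beta)}$ versus the product of $(-1)^{(\cL_1^r\cK^{i-r},\beta_i)}$ over $i$, together with the cross-term contributions to the determinant, is the kind of identity that has to be verified carefully; the $\Cq$-weight bookkeeping (recalling that the $q$ here is the $r$th power of the original $\Ct$-parameter on $Z_1$) is where errors would most easily creep in. Everything else is a direct translation between the abstract formulation of Conjecture \ref{sconj1} and the concrete localization recipe of Section \ref{e_rank}.
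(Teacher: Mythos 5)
Your overall strategy is the paper's: apply Conjecture \ref{sconj1} to $Z_r$, recognize the product of PT moduli spaces with the threefold interactions as the $\bA$-localization data defining rank $r$ theory in Section \ref{e_rank}, and then match prefactors, cross-terms and square roots (the latter via the modified argument of Section \ref{pull_square}, discarding perturbative pieces as in Section \ref{s_pert}). However, there is a concrete missing ingredient, not just a matter of ``careful verification.'' The proposition only makes sense after one fixes a homomorphism $\gamma:\bA\twoheadrightarrow\TK(Z_r)$ converting the K\"ahler parameters of $Z_r$ into $GL(r)$-equivariant parameters, and the paper spends the first half of the proof constructing $\gamma$ (via the divisors $D_s$) and computing its dual on curve classes: $\gamma^*[C_{ij}]=\varepsilon_i-\varepsilon_j$ and the formula \eqref{gvbe} for $\gamma^*(\vec\beta)$. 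Your sketch silently treats $Q^{[C_{ij}]}$ as if it were already $a_i/a_j$ and never addresses the degree part $Q^{\vec\beta}$ at all; without $\gamma$ the claimed identification of $\bS\,Q^{[C_{ij}]}\cX_{ji}$ with the cross-terms cannot even be written down.

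This matters because the discrepancy between $\Sd\bigl(\tfrac{a_i}{a_j}(N_{ji}+\overline{N_{ij}})\bigr)$ and the cross-term $\tSd$ of Section \ref{e_rank} is not merely ``square-root normalization and Chow pullbacks'': by \eqref{SoverS} it involves the sign $(-1)^{\rk N_{ji}}$, the weight $\left(\tfrac{a_i}{a_j}\right)^{-\frac12(\rk N_{ij}+\rk N_{ji})}$, and the ratio $\bigl(\det N_{ij}/\det N_{ji}\bigr)^{1/2}$. The heart of the paper's verification is that the first nontrivial factor is absorbed precisely by $a^{\gamma^*(\vec\beta)}$ coming from the prefactor \eqref{prefC}, yielding in \eqref{powa} exactly the $\bA$-weight of $\bL^{r/2}_{\cL_1-\cL_2}$ appearing in \eqref{deftODTr}; and that the determinant ratios, rewritten via Proposition \ref{p_symm} and Serre duality as in \eqref{eq:6}, convert the fiberwise twists $\bL_{\cL_1^r\cK^{i-r}-\cL_2^r\cK^{1-i}}$ of \eqref{vcC} into $\boxtimes\,\bL_{\cG_i}$ with $\cG_i=r(\cL_1-\cL_2)+(\textup{codimension }3)$, i.e.\ into the rank $r$ twist up to classes invisible to $\bL$. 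Your proposal names the right tools (threefold interactions, Serre duality, the square-root argument) but omits the $\gamma$-bookkeeping that makes the ranks, half-integral $a$-weights, and determinants close up; supplying it is exactly what turns your outline into the paper's proof.
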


For this statement to make sense, one has to substitute
$GL(r)$-equivariant parameters for 
K\"ahler parameters of $Z_r$, in other words, one 
needs a surjective map 
$$
\gamma: \bA \twoheadrightarrow \TK(Z_r)  \,. 
$$
We start with the description of $\gamma$.

\subsubsection{}
To define $\gamma$, it suffices to give the images of the 
coordinate cocharacters 
$$
\delta_i : \Ct \to \bA 
$$
in the cocharacter lattice $H^2(X,\Z)$ of $\TK(Z_r)$. 

For $s\in \{\frac12,\frac32,\dots,r+\frac{1}{2}\}$, let
\begin{align*}
D_s &= \textup{attracting manifold of $X_{s-1/2}$} \\
      &= \textup{repelling manifold of $X_{s+1/2}$} 
\end{align*}
where attracting and repelling manifolds are defined for the action of
$q\to 0$. This is illustrated in Figure \ref{f_Z4}.

By construction, this means that 
\begin{equation}
D_s\Big|_{X_i} = 
\begin{cases}
c_1(\cL_2^r \cK^{1-i})\,, \quad &i = s-1/2 \,,\\
c_1(\cL_1^r \cK^{i-r})\,, \quad &i = s+1/2 \,, \\
0 \,,  \quad &\textup{otherwise} \,. 
\end{cases}\label{DsX}
\end{equation}
We set 
$$
\gamma(\delta_i) = \frac12 \sum_{s<i} D_s - \frac12 \sum_{s>i} D_s \,.
$$

\psset{unit=1.8 cm}
\psset{yunit=1.2 cm}
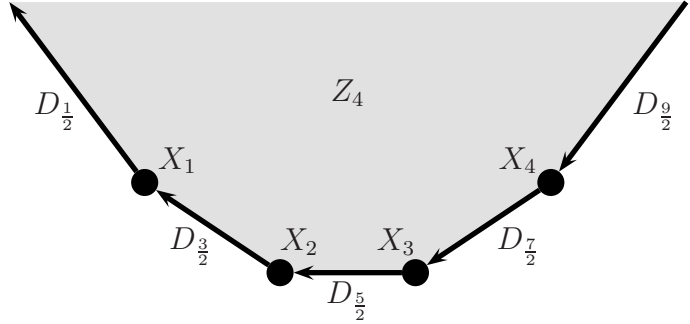
\begin{figure}[!htbp]
  \centering
   \begin{pspicture}(0,-0.5)(5,3)
\newgray{lg}{.88}
\pspolygon*[linecolor=lg](0,3)(1,1)(2,0)(3,0)(4,1)(5,3)
\pnode(0,3){A}
\cnode*(1,1){.1}{X1}
\cnode*(2,0){.1}{X2}
\cnode*(3,0){.1}{X3}
\cnode*(4,1){.1}{X4}
\pnode(5,3){B}
\ncline[linewidth=2pt]{<-}{A}{X1}
\ncline[linewidth=2pt]{<-}{X1}{X2}
\ncline[linewidth=2pt]{<-}{X2}{X3}
\ncline[linewidth=2pt]{<-}{X3}{X4}
\ncline[linewidth=2pt]{<-}{X4}{B}
\rput[lb](1.1,1.1){$\displaystyle X_1$}
\rput[lb](2,0.2){$\displaystyle X_2$}
\rput[rb](3,0.2){$\displaystyle X_3$}
\rput[rb](3.9,1.1){$\displaystyle X_4$}
\rput[tr](0.5,2){$\displaystyle D_{\frac12}$}
\rput[tr](1.5,0.5){$\displaystyle D_{\frac32}$}
\rput[t](2.5,-0.1){$\displaystyle D_{\frac52}$}
\rput[tl](3.6,0.5){$\displaystyle D_{\frac72}$}
\rput[tl](4.6,2){$\displaystyle D_{\frac92}$}
\rput[c](2.5,2){$\displaystyle Z_4$}
\end{pspicture}
 \caption{The moment map for the fiberwise $(\C^\times)^2$-action
   offers a schematic representation of $Z_4$. 
The 3-folds $X_i$ and the divisors $D_s$ are mapped to 
vertices and edges, respectively. The arrows indicate $q\to 0$
limits.}
  \label{f_Z4}
\end{figure}

\subsubsection{}
It is easy to describe the dual map 
\begin{equation}
\gamma^*: H_2(Z_r,\Z) \hookrightarrow \bA^\wedge\,,\label{convA}
\end{equation}
between the character groups. The lattice 
$\bA^\wedge$ is spanned by coordinate characters
$\varepsilon_i$,  where $a^{\varepsilon_i} =a_i$. They form the 
basis dual to $\{\delta_i\}$. 

The classes of unbroken curves $C_{ij}$ from 
$X_j$ to $X_i$ 
are mapped to positive roots 
$$
\gamma^*\left([C_{ij}]\right)  =  \varepsilon_i -  \varepsilon_j \,,
\quad i < j \,. 
$$
while on homology classes supported on $\bigsqcup X_i$ the map 
$\gamma^*$ is given by the following formula.  

Fix a curve class $\beta_k$ in each of the $X_k$'s and let 
\begin{equation}
\vec \beta = \sum_k \iota_{k,*} \beta_k \in H_2(Z,\Z) \label{vecb} 
\end{equation}
be their union in $Z$, where $\iota_k$ is the inclusion of $X_k$. 
{}From \eqref{DsX}, we have 
\begin{equation}
\gamma^*(\vec\beta ) = \frac{r}{2} \sum_i (L_1-L_2,\beta_i) \, 
\varepsilon_i -  \frac{1}{2} \sum_{i<j} (\cK,\beta_i+\beta_j) ( \varepsilon_i -
\varepsilon_j) \,. \label{gvbe} 
\end{equation}

\subsubsection{}
Now let the PT data on $\bigsqcup X_i$ be specified by collections 
of sheaves $\cF_i$ with sections $s_i$ as in \eqref{PTrs}.  We define 
$\beta_i = \ch_2 \cF_i \in H_2(X_i,\Z)$, denote by \eqref{vecb} the
union in $Z$ of these classes, and set
$$
\beta = \sum \beta_i \in H_2(X,\Z)\,.
$$
By the main conjecture, the contribution of \eqref{PTrs} to the
membrane index equals the product of a certain prefactor, virtual class 
contribution, and the interaction $\Phi$. The prefactor equals, including 
the replacement of K\"ahler parameters by equivariant ones, 
\begin{align}
 \textup{prefactor} = & (-1)^{\chi(\cF)+\sum(r \cL_1 + (i-r)
    \cK,\beta_i)} \notag \times \\
& q^{\chi(\cF) + (\beta,\cK)/2} \, Q^\beta \, a^{\gamma^*(\vec\beta)}
\,.  
 \label{prefC}
\end{align}
The virtual class contribution equals 
\begin{equation}
  \label{vcC}
  \begin{matrix}
 \textup{virtual class} \\
\textup{contribution} 
  \end{matrix}  = 
\boxtimes\,\, \Ovir \otimes \left( \Kvir \otimes \bL_{\cL_1^r \cK^{i-r}-
\cL_2^r \cK^{1-r}} \right)^{1/2} \,.
\end{equation}
Finally, in the interaction terms, we discard the perturbative terms as 
discussed in Section \ref{s_pert}, and for the remainder we get 
the following identification 
$$
\cX'_{ji} = N_{ji} + \overline{N_{ij}}\,,
$$
where $N_{ji}$ was defined in \eqref{Nji} and bar denotes dual. 
Therefore, we have 
\begin{equation}
  \label{crossC}
  \textup{interaction $\Phi$} = \bigoplus_{i<j} \Sd 
\left(\frac{a_i}{a_j} (N_{ji} +
  \overline{N_{ij}})\right)
\,. 
\end{equation}
This is clearly beginning to look like higher rank DT theory, and we
will now systematically check the agreement. 

\subsubsection{}

We start with the following identity 
\begin{equation}
\frac{\Sd 
\left(\frac{a_i}{a_j} (N_{ji} +
  \overline{N_{ij}})\right)} 
{\tSd 
\left(\frac{a_i}{a_j} N_{ji} +
  \frac{a_j}{a_i} N_{ij} \right)} = 
(-1)^{\rk N_{ji}} 
\left(\frac{a_i}{a_j}\right)^{\!\!
- \frac12 (\rk N_{ij} + \rk
  N_{ji})} 
\left(\frac{{\det N_{ij}}}{{\det N_{ji}}}\right)^{1/2}
\,. \label{SoverS} 
\end{equation}
We have 
$$
\rk N_{ji} = \chi(\cF_i) - \chi(\cF_j) - (\cK,\beta_j) \,,
$$
and so, in particular, 
$$
- \frac12 (\rk N_{ij} + \rk
  N_{ji}) = \frac12 (\cK,\beta_j+\beta_i) \,.
$$
Thus from \eqref{gvbe} we conclude 
\begin{equation}
  \label{powa}
 a^{\gamma^*(\vec \beta)} \, \left(\frac{a_i}{a_j}\right)^{\!\!
- \frac12 (\rk N_{ij} + \rk
  N_{ji})} = \prod a_i^{\frac12 r(\cL_1-\cL_2,\beta_i)}  \,. 
\end{equation}
We observe that this is the natural $\bA$-weight of the bundle 
$\bL^{r/2}_{\cL_1-\cL_2}$ that appears in \eqref{deftODTr}. 

Recall
that we define higher rank DT invariants as integrals over
$\bA$-fixed loci, and thus all line bundle contributions have a 
line bundle part, which is defined in DT theory of $\bigsqcup X_i$
and an $\bA$-character part that comes from converting K\"ahler 
parameters to equivariant ones.

A similar check finds the agreement between the minus signs and 
the sign in the prefactor in \eqref{deftODT2r}. 

\subsubsection{}

We now turn to the determinants in the right-hand side of
\eqref{SoverS}. By Proposition \ref{p_symm} and Serre duality, we have 
\begin{equation}
\frac{{\det N_{ij}}}{{\det N_{ji}}} = \frac{\det \chi(\cF_j\otimes
  (\cO -\cK))}{\det \chi(\cF_i\otimes
  (\cO -\cK))} \,.\label{eq:6}
\end{equation}
Thus 
\begin{equation*}
\boxtimes \, \bL_{\cL_1^r \cK^{i-r}-
\cL_2^r \cK^{1-r}} 
\otimes \bigotimes_{i<j} \frac{{\det
    N_{ij}}}{{\det N_{ji}}} = 
\boxtimes \, \bL_{\cG_i}
\end{equation*}
where
$$
\cG_i = \cL_1^r \cK^{i-r}-
\cL_2^r \cK^{1-r} - (2i-r-1)(\cO-\cK) \,, \quad i=1,\dots,r \,. 
$$
A direct computation shows that 
$$
\cG_i = r (\cL_1 - \cL_2) + \dots 
$$
where dots stand for a $K$-theory class of codimension 3 in $X$ which 
thus does not affect line bundles of the form $\bL_\cG$. 

This completes the proof of Proposition \ref{p_eng}.

\section{Existence of square roots} 
\label{s_sqrt}

\subsection{Symmetric bundles on squares}

We start with the following general observation. 

\begin{Lemma}
Let $Y$ be a algebraic variety and let $L$ be a line
bundle on $Y \times Y$ such that 
$$
(12)^* L \cong L
$$
where $(12)$ is the permutation of factors. Then the restriction 
$L_\Delta$ 
of $L$ to the diagonal $\Delta \subset Y \times Y$ has a square root. 
\end{Lemma}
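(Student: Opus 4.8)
The plan is to use the symmetry of $L$ to produce the square root on the diagonal as the pullback of $L$ along a map whose ``square'' is the diagonal restriction. First I would consider the map
\[
\delta : Y \to Y\times Y,\qquad \delta(y)=(y,y),
\]
so that $L_\Delta = \delta^* L$. The idea is not to take a literal square root of $\delta^* L$ directly, but to exploit that $L$ is pulled back — up to the swap — in a way that lets us ``halve'' its restriction to the diagonal. Concretely, I would try to realize $L_\Delta$ as $\mathscr{N}\otimes\mathscr{N}$ for $\mathscr{N}$ a line bundle built functorially from $L$ and the symmetry isomorphism.

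The key technical device I would use is the standard trick for $\mathbb{Z}/2$-equivariant (or symmetric) line bundles: given $\phi : (12)^* L \xrightarrow{\ \sim\ } L$, one can normalize $\phi$ so that it is an honest descent datum, i.e. so that the composite $(12)^*\phi \circ \phi = \mathrm{id}$ on $(12)^*(12)^* L = L$. The only obstruction to this normalization is a scalar, which (after possibly rescaling $\phi$) can be arranged to be $\pm 1$; when it is $+1$, the bundle $L$ descends along the quotient $q: Y\times Y \to \mathrm{Sym}^2 Y$ to a line bundle $\overline{L}$, and $q^*\overline{L}\cong L$. Restricting to the diagonal, the quotient map $q$ is ramified to order $2$ along $\Delta$, so $q|_\Delta$ factors as $Y \xrightarrow{\ \sim\ } \Delta \hookrightarrow \mathrm{Sym}^2 Y$ and pulling back $\overline{L}$ picks up exactly a square: $\delta^*L \cong (\text{pullback of }\overline{L}\text{ to the diagonal copy})^{\otimes 2}$, coming from the branching. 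When the scalar obstruction is $-1$, one instead twists by the sign character — but since we only care about the \emph{diagonal} restriction, where the $\mathbb{Z}/2$-action is trivial on the reduced structure, the two cases can be reconciled: the sign twist restricts trivially to $\Delta$, and one still gets a square root of $L_\Delta$.

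The step I expect to be the main obstacle is making the branching argument rigorous when $Y$ is singular or when $\mathrm{Sym}^2 Y$ is badly behaved, since then $q$ may not be flat and ``ramified of order $2$'' needs care. To sidestep this I would instead argue purely with the equivariant structure: view $L$ restricted to a formal/infinitesimal neighborhood of $\Delta$, where $(12)$ acts, and observe that the normalized $\phi$ makes $L|_\Delta$ into a $\mathbb{Z}/2$-equivariant bundle on $\Delta$ with trivial underlying action, hence $\phi|_\Delta = \pm 1$; combining $L|_\Delta$ with its image under $\phi$ and taking the ``difference'' of the two square-root candidates coming from the two directions transverse to $\Delta$ yields $\mathscr{N}$ with $\mathscr{N}^{\otimes 2}\cong L_\Delta$. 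Equivalently, and perhaps cleanest: pick any line bundle $M$ on $Y$, and compare $L$ with $p_1^* M \otimes p_2^* M$ — the symmetry lets us reduce to the case $L_\Delta$ trivial, modulo a square, by a direct Picard-group computation. I would present whichever of these two routes the later sections of the paper (Section \ref{pull_square}) make most convenient, since the Lemma is used precisely to guarantee the square roots $\Kvir^{1/2}$, $\bL_{\cL_1-\cL_2}^{1/2}$ exist after an allowed twist by a bundle pulled back from the Chow variety.
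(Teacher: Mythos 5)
The central step of your primary (descent) argument is not correct, and it is worth pinpointing exactly where. Suppose $L$ does descend to $\overline{L}$ on $\mathrm{Sym}^2 Y$, so $L = q^*\overline{L}$. You claim that since $q$ is ramified to order $2$ along $\Delta$, ``pulling back $\overline{L}$ picks up exactly a square.'' But ramification to order $2$ is a statement about pullback of \emph{divisors}: $q^*\mathscr{O}(B)\cong\mathscr{O}(2\Delta)$ where $B\subset\mathrm{Sym}^2Y$ is the branch divisor. Pullback of line bundles, and restriction, do not pick up multiplicities. Since $q|_\Delta\colon\Delta\to B$ is an isomorphism, one simply has
$$
L_\Delta \;=\; (q^*\overline{L})\big|_\Delta \;=\; (q|_\Delta)^*\bigl(\overline{L}\big|_B\bigr) \;\cong\; \overline{L}\big|_B\,,
$$
with no square appearing. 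Taken literally, the isomorphism you write down, $\delta^*L\cong(\text{pullback of }\overline L\text{ to }\Delta)^{\otimes 2}$, would say $\overline{L}|_B\cong(\overline{L}|_B)^{\otimes 2}$ and hence force $\overline{L}|_B$ to be trivial --- far stronger than (and generally contradictory to) the Lemma. The issue is a confusion between divisor pullback and line-bundle restriction.

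The paper's proof is purely cohomological and you do not take this route. It uses that a line bundle on $Y$ has a square root iff $c_1$ is divisible by $2$ in $H^2(Y,\Z)$, then checks divisibility of $c_1(L_\Delta)$ directly. Since $H^0(Y,\Z)$ and $H^1(Y,\Z)$ are torsion-free, K\"unneth gives a clean decomposition of $H^2(Y\times Y,\Z)$, and symmetry of $L$ forces $c_1(L) = \alpha\otimes 1 + \beta + 1\otimes\alpha$ with $\alpha\in H^2(Y,\Z)$ and $\beta$ in the \emph{antisymmetric} part $\Lambda^2 H^1(Y,\Z)\subset H^1\otimes H^1$ (the sign from graded commutativity is what makes the fixed part of $H^1\otimes H^1$ antisymmetric rather than symmetric). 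Restricting to $\Delta$, the outer pieces contribute $2\alpha$, and the middle piece maps by $\beta_1\otimes\beta_2\mapsto \beta_1\cup\beta_2$, which is skew on $H^1$; hence the image of $\Lambda^2 H^1$ lands in $2\,H^2(Y,\Z)$. This $H^1\otimes H^1$ contribution is exactly the part of the Lemma with content --- nothing about it is \emph{a priori} even --- and your descent argument never engages with it. Your last suggestion, comparing $L$ with $p_1^*M\otimes p_2^*M$, is closer in spirit (it handles the $\alpha\otimes 1 + 1\otimes\alpha$ piece, since $p_1^*M\otimes p_2^*M$ restricts to $M^{\otimes 2}$ on $\Delta$), but by itself it does not touch the $H^1\otimes H^1$ piece and so does not complete the proof. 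The formal-neighborhood alternative is too vague to evaluate; I don't see a construction of $\mathscr{N}$ there.
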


\noindent
Our original claim was weaker. We are very grateful to 
Davesh Maulik who pointed out to us that the argument works in full
generality presented here. 

\begin{proof}
{}From an \'etale exact sequence of sheaves
$$
1 \to \{\pm 1\} \to \cO_Y^* \xrightarrow{f\mapsto f^2} \cO_Y^* \to 1 
$$
we have an exact sequence of groups 
$$
H^1(Y,\Z/2) \to \Pic Y \xrightarrow{\cL\mapsto \cL^{\otimes 2}} \Pic Y
\to H^2(Y,\Z/2)\,,
$$
where the last map is the reduction of $\cL \mapsto c_1(\cL)$ modulo
$2$. Therefore, a line 
bundle on $Y$ has a square root if and only if its first Chern class
is divisible by $2$ in $H^2(Y,\Z)$. 

Since 
$$
\textup{torsion} \left(H^k(Y,\Z)\right) \cong 
\textup{torsion} \left(H_{k-1}(Y,\Z)\right)
$$
the groups $H^0(Y,\Z)$ and $H^1(Y,\Z)$ are torsion-free. Therefore, 
K\"unneth decomposition takes the form 
\begin{equation}
H^2(Y \times Y,\Z) = \bigoplus_{i+j=2} H^i(Y,\Z) \otimes H^j(Y,\Z) \,. 
\label{Kunn}
\end{equation}
Assuming $Y$ is connected, the symmetry of $L$ implies that 
$$
c_1(L) = \alpha \otimes 1 +  \beta + 1 \otimes \alpha\,, \quad 
\alpha \in H^2(Y,\Z),\, \beta\in \Lambda^2 H^1(Y,\Z)\,,
$$
in the decomposition \eqref{Kunn}. The restriction to the diagonal 
of the middle piece is the map 
$$
\beta_1 \otimes \beta_2 \mapsto \beta_1 \cup \beta_2 
$$
and from the skew-symmetry of cup product on $H^1(Y,\Z)$ we conclude that
$c_1(L_\Delta)$ 
is even. 
\end{proof}

\subsection{Square roots in DT theory} 

\subsubsection{} 

Symmetric line bundles on products appear
naturally in the DT theory of 3-folds.
Let $\cF$ be the universal family of the $1$-dimensional sheaves
over $\PT(X)$. Consider the line bundle 
$$
\bL_{12} = \det \chi(\cF_1,\cF_2)
$$
over the product of two PT moduli spaces. 

\begin{Proposition}\label{p_symm} 
There is a canonical isomorphism $\bL_{12} \cong \bL_{21}$. 
\end{Proposition}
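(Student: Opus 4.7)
The plan is to reduce the claim to Serre duality on the 3-fold $X$, combined with a dimension-count via Grothendieck-Riemann-Roch. Let $\pi : \PT(X) \times \PT(X) \times X \to \PT(X) \times \PT(X)$ be the projection, and denote by $\cF_1, \cF_2$ the universal families pulled back from the two PT factors, so that $\bL_{12} = \det R\pi_* R\Hom(\cF_1,\cF_2)$.

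First, I would invoke relative Grothendieck-Serre duality: since $\pi$ is smooth of relative dimension $3$ with relative dualizing complex $p_X^* \cK_X[3]$, one obtains a canonical quasi-isomorphism in $D^b(\PT(X)\times\PT(X))$,
\[
R\pi_* R\Hom(\cF_1,\cF_2) \;\cong\; \bigl(R\pi_* R\Hom(\cF_2,\cF_1 \otimes p_X^*\cK_X)\bigr)^{\!\vee}[-3].
\]
Applying $\det$ and using the identities $\det(V^\vee) = (\det V)^{-1}$ and $\det(V[-3]) = (\det V)^{-1}$, which compose to the identity, I conclude
\[
\bL_{12} \;\cong\; \det R\pi_* R\Hom(\cF_2,\cF_1 \otimes \cK_X).
\]

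Next I would compare this with $\bL_{21} = \det R\pi_* R\Hom(\cF_2,\cF_1)$ by studying the ``twist'' line bundle
\[
M \;:=\; \det R\pi_* R\Hom\bigl(\cF_2,\,\cF_1 \otimes (\cK_X - \cO_X)\bigr),
\]
so that the Proposition reduces to the assertion $M \cong \cO$. A Grothendieck-Riemann-Roch calculation gives
\[
\ch(M) \;=\; \pi_*\!\bigl(\ch(\cF_2^\vee)\cdot \ch(\cF_1)\cdot (e^{c_1(\cK_X)}-1)\cdot \Td(X)\bigr)_{(1)}.
\]
Here the crucial dimension count is that $\cF_1,\cF_2$ are $1$-dimensional, so $\ch(\cF_i)$ has components of degree $\ge 2$ on the total space, while $(e^{c_1(\cK_X)}-1)$ contributes degree $\ge 1$. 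The integrand thus has total degree $\ge 5$; after push-forward along the $3$-fold $X$ we obtain classes on the base of degree $\ge 2$. In particular both the rank and $c_1$ of $M$ vanish.

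The main obstacle is upgrading this vanishing of $c_1$ to an actual \emph{canonical} isomorphism $M \cong \cO$ (rather than mere numerical triviality). Two complementary routes suggest themselves: first, $M$ is built from a K-class whose Chern character, by \eqref{degbL}-style analysis, depends only on the cycle classes of the $\cF_i$, so one expects $M$ to be pulled back from $\Chow(X) \times \Chow(X)$, where the above degree estimate becomes a direct Chow-theoretic triviality statement; second, one can symmetrize the Serre duality construction, noting that the isomorphism produced above is compatible with the swap $(12)$ up to the preceding Lemma applied to the symmetric line bundle $\bL_{12}\otimes\bL_{21}$, which controls the remaining $2$-torsion ambiguity. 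Carrying out either route rigorously — and in particular exhibiting the trivializing section as coming from the Serre duality trace pairing — is the real content of the proof.
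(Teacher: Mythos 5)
Your opening move coincides with the paper's: Serre duality in the family identifies $\bL_{12}=\det\chi(\cF_1,\cF_2)$ with $\det\chi(\cF_2,\cF_1\otimes\cK_X)$, so everything hinges on showing that twisting the second argument by $\cK_X$ does not change the determinant line, i.e.\ that your $M=\det\chi\bigl(\cF_2,\cF_1\otimes(\cK_X-\cO_X)\bigr)$ is \emph{canonically} trivial. This is precisely the paper's Lemma \ref{l_otimesL}, and it is here that your argument has a genuine gap. The Grothendieck--Riemann--Roch count only shows that the degree-$0$ and degree-$1$ parts of the pushed-forward Chern character vanish, i.e.\ that $c_1(M)$ is trivial \emph{rationally}. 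That does not make $M$ trivial as a line bundle (the PT moduli spaces can have nontrivial $\Pic^0$ and torsion in $\Pic$), and it certainly does not produce a canonical isomorphism, which is what the Proposition asserts and what is actually used downstream (e.g.\ in Proposition \ref{p_Ksquare}, where the identification must be natural enough to descend and to control square roots). You acknowledge the obstacle, but neither of your suggested routes closes it: the claim that such determinant bundles are pulled back from $\Chow(X)$ is itself a nontrivial statement (the paper defers it to \cite{JO}), and the lemma on symmetric bundles on $Y\times Y$ concerns square roots of restrictions to the diagonal, not the symmetry $\bL_{12}\cong\bL_{21}$, so invoking it to ``control the $2$-torsion ambiguity'' does not prove the isomorphism.

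The paper's proof of Lemma \ref{l_otimesL} is an explicit construction rather than a numerical one: write $\cL$ as a ratio of very ample bundles to reduce to $\cL$ very ample; for a generic section $s$ the cokernel $\cG$ of $s:\cF_2\to\cF_2\otimes\cL$ is $0$-dimensional and filtered by skyscrapers $\cO_x$, and $\det\chi(\cF_1,\cO_x)=(\det\cF_1)^*_x$ is canonically trivial because $\cF_1$ is $1$-dimensional, so $\det\cF_1=\cO_X$; this produces an isomorphism $\phi_s$, and independence of $s$ follows since $\phi_s\phi_{s_0}^{-1}$ is homogeneous of degree $\rk\chi(\cF_1,\cF_2)=0$ and regular away from a locus of codimension $>1$, hence identically $1$. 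Your codimension bookkeeping is the numerical shadow of the same fact (both exploit that $\cF_1$ has $1$-dimensional support), but without a construction of this kind the central step — the canonical trivialization — is missing, so the proposal as written does not prove the Proposition.
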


We expect the same symmetry to hold for 
Donaldson-Thomas moduli space of X in any stability chamber. 
The proof below will have to be modified to account 
for $0$-dimensional subsheaves in $\cF$.

Observe that this statement is consistent with the following 
special case of the Serre duality. Suppose $\cK_X$ is trivial 
and let $\bk$ be the weight of $\Aut X$ action on $\cK_X$. 
Then Serre duality gives
\begin{multline}
   \det \chi(\cF_1,\cF_2) =  \det \chi(\cF_2,\cF_1 \otimes \cK_X) =\\
=  \det \chi(\cF_2,\cF_1) \otimes \bk^{\rk \chi(\cF_2,\cF_1)} = 
\det \chi(\cF_2,\cF_1) \,.
\label{deghomchi}
\end{multline}
It is clear that the Proposition follows, by Serre duality, from the 
following 

\begin{Lemma}\label{l_otimesL} 
For any line bundle $\cL$ on $X$ we have
$$
\det \chi(\cF_1,\cF_2) = \det \chi(\cF_1,\cF_2\otimes \cL)\,, 
$$
canonically. 
\end{Lemma}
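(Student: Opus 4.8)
The plan is to reduce the statement to a Chern-class computation on $\PT(X)\times\PT(X)$ and to show that $c_1$ of the ``difference'' line bundle $\det\chi(\cF_1,\cF_2\otimes\cL)\otimes\det\chi(\cF_1,\cF_2)^{-1}$ vanishes (or at least is torsion, which then has to be excluded by a finer argument giving the canonical isomorphism). By additivity of $\det\chi(-,-)$ in each variable and the projection formula, this difference is $\det\chi(\cF_1,\cF_2\otimes(\cL-\cO_X))$, so everything comes down to understanding $\det\chi(\cF_1,\cF_2\otimes\cM)$ where $\cM=\cL-\cO_X$ is a $K$-theory class of rank $0$ on $X$.

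First I would apply Grothendieck--Riemann--Roch to the projection $p_{12}:\PT(X)\times\PT(X)\times X\to\PT(X)\times\PT(X)$ to express $c_1\det\chi(\cF_1,\cF_2\otimes\cM)$ as the degree-$2$ part of $p_{12*}\big(\ch(\cF_1)^\vee\,\ch(\cF_2)\,\ch(\cM)\,\Td(T_X)\big)$, where I write $\cF_i$ for the pullback of the universal sheaf from the $i$-th factor. The key structural input is that $\cF_1$ and $\cF_2$ are $1$-dimensional sheaves: $\ch(\cF_i)$ starts in codimension $2$ on the total space, i.e.\ $\ch_0(\cF_i)=\ch_1(\cF_i)=0$, with $\ch_2(\cF_i)$ supported on the (relative) curve. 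Since we also multiply by $\ch(\cM)$ with $\ch_0(\cM)=\rk\cM=0$, the integrand over $X$ that can contribute to the codimension-$1$ output on $\PT(X)\times\PT(X)$ must have total codimension $4$ along $X$; but the available classes $\ch_{\ge 2}(\cF_1)$, $\ch_{\ge 2}(\cF_2)$, $\ch_{\ge 1}(\cM)$ already force codimension $\ge 2+2+1=5>4$ unless one of the ``extra'' factors is taken in its lowest piece — and a dimension count on the $3$-fold $X$ then kills the relevant term. Making this bookkeeping precise is the technical heart of the argument: one lists the finitely many ways to distribute the available codimension among $\ch(\cF_1)$, $\ch(\cF_2)$, $\ch(\cM)$, $\Td(T_X)$ so that the part integrated over $X$ lands in $H^6(X)$ and the part surviving on $\PT(X)\times\PT(X)$ is in $H^2$, and checks each term vanishes because it would require a class of degree exceeding $6$ on $X$, or because it only involves $\ch_1(\cM)$, $\ch_2(\cM)$ paired against curve classes in a way that integrates to $0$ since $\cM$ has rank $0$.

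Alternatively, and perhaps more cleanly, I would argue geometrically in the spirit of Section~\ref{s_DT}: the line bundle $\bL_{\cG}=\det\chi(\cF,\cG)$ on a single DT moduli space, for $1$-dimensional $\cG$, was noted to be pulled back from the Chow variety and to depend only on $\cycle(\cG)$ up to rational equivalence (equation~\eqref{degbL} and the surrounding discussion). The present Lemma is the ``two-variable'' analogue: one wants $\det\chi(\cF_1,\cF_2\otimes\cL)\cong\det\chi(\cF_1,\cF_2)$, and since tensoring a $1$-dimensional sheaf by a line bundle does not change its cycle class, the GRR computation above should exhibit the isomorphism class of $\det\chi(\cF_1,\cF_2\otimes\cL)$ as depending only on $\cycle(\cF_1)$ and $\cycle(\cF_2)$, hence on $\cF_2$ rather than $\cF_2\otimes\cL$. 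The canonicity of the isomorphism I would get by constructing it on the level of determinant lines: choose a two-term locally free resolution (up to quasi-isomorphism) of $R(p_{12})_*R\!\Hom(\cF_1,\cF_2\otimes(-))$ and use the canonical trivialization of $\det$ of the complex $R(p_{12})_*R\!\Hom(\cF_1,\cF_2\otimes(\cO_X-\cL))$ coming from the fact that $\cO_X-\cL$ is a rank-$0$, degree-$0$ (in the appropriate sense) class; in fact one can filter $\cL$ by sections of a very ample twist and reduce to the case $\cL=\cO_X(D)$ with $D$ a smooth divisor, where $\cO_X-\cL$ is resolved by $\iota_{D*}$ of a line bundle on the surface $D$, and the contribution $\det\chi(\cF_1,\cF_2\otimes\iota_{D*}(\cdots))$ is then computed from $2$-dimensional sheaves restricted to $D$, which I expect to trivialize canonically by another dimension count now on the $2$-fold $D$.

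The main obstacle I anticipate is not the vanishing of $c_1$ — the dimension count makes that fairly robust — but upgrading ``$c_1$ vanishes'' to a \emph{canonical} isomorphism of line bundles, which is what the Lemma and the subsequent Proposition~\ref{p_symm} really need (to propagate through $\bPhi$ and the square-root constructions). The danger is a residual $2$-torsion or $\mu_2$-gerbe ambiguity, and also the fact that $\PT(X)$ may be badly singular or non-proper so that ``$c_1=0\Rightarrow$ trivial'' is false at the level of genuine line bundles. I would handle this by working with the determinant of the perfect complex $R(p_{12})_*R\!\Hom(\cF_1,\cF_2\otimes(\cO_X-\cL))$ directly and producing its trivialization from an explicit resolution, rather than through Chern classes; the resolution-by-divisors reduction described above is the concrete mechanism, and the one step I would expect to require genuine care is checking that the trivialization is independent of the auxiliary choices (the divisor $D$, the twist, the resolution), which is exactly the kind of compatibility that Serre duality — invoked right after the Lemma to deduce Proposition~\ref{p_symm} — is sensitive to.
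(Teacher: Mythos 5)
There is a genuine gap: the content of the Lemma is the \emph{canonical} isomorphism, and neither of your two routes actually produces it. Your first (GRR/$c_1$) route at best controls the first Chern class of $\det\chi(\cF_1,\cF_2\otimes(\cL-\cO_X))$, which, as you yourself note, cannot be upgraded to a trivialization on a possibly singular, non-proper moduli space; and the assertion that a ``canonical trivialization comes from the fact that $\cO_X-\cL$ is a rank-$0$ class'' is unjustified -- rank zero alone gives nothing canonical (compare $\bL_{\cL_1-\cL_2}$, which is built from a rank-$0$ class and is certainly nontrivial). Your second route -- reduce to $\cL$ very ample and use a divisor/section so that the difference is governed by a torsion correction term -- is indeed the shape of the paper's argument, but you stop exactly where the proof begins. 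The paper takes a generic section $s$ of (very ample) $\cL$, so that $\cG=\Coker(\cF_2\xrightarrow{s}\cF_2\otimes\cL)$ is $0$-dimensional and filtered by skyscrapers $\cO_x$, and then the trivialization mechanism is \emph{not} a dimension count on the surface $D$: it is the identity $\det\chi(\cF_1,\cO_x)=(\det\cF_1)^*_x$ together with $\det\cF_1=\cO_X$, which holds because $\cF_1$ is $1$-dimensional (codimension-$2$ support) -- i.e.\ the triviality comes from the \emph{first} argument $\cF_1$, not from the geometry of $D$. Your proposal never identifies this, so even granting the divisor reduction you have no construction of the isomorphism $\phi_s$.

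The second missing piece is the one you explicitly defer: independence of the auxiliary choice ($s$, or $D$). This is not a routine compatibility check to be postponed; it is half of the proof. The paper settles it by observing that the bad locus $\Delta\subset H^0(X,\cL)$ of sections for which $\cG$ fails to be $0$-dimensional has codimension $>1$, and that for fixed $s_0\notin\Delta$ the ratio $\phi_s\phi_{s_0}^{-1}$ is a regular function of $s$ off $\Delta$, homogeneous of degree $\rk\chi(\cF_1,\cF_2)=0$, hence identically $1$. Without an argument of this kind your ``isomorphism'' depends on $D$ and the subsequent uses of the Lemma (Proposition~\ref{p_symm}, the square roots in Proposition~\ref{p_Ksquare}) would not go through. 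So: right overall strategy in your second approach, but both the actual trivialization of the $0$-dimensional correction and the independence-of-section argument -- the two steps that constitute the paper's proof -- are absent.
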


\begin{proof}
By writing $\cL$ as a ratio of two very ample line bundles, we may 
reduce to the case when $\cL$ is very ample. Let $s$ be a  generic 
section of $\cL$. The choice 
of $s$ in not unique and the dependence on the choice of $s$ 
will be analyzed later. Consider the $0$-dimensional sheaf 
$$
\cG = \Coker \left(\cF_2 \xrightarrow{s} \cF_2 \otimes \cL\right) \,.
$$
It has a 
canonical filtration by direct sums of sky-scraper sheaves 
$\cO_x$, $x\in X$.  For any sheaf $\cF$ on $X$ we have 
$$
\det \chi(\cF,\cO_x) = \left(\det \cF\right)^*_x \,,
$$
by taking a locally free resolution of $\cF$. Since $\cF_1$ is 
$1$-dimensional, 
$$
\det \cF_1 = \cO_X
$$
which gives an isomorphism 
$$
\phi_s: \det \chi(\cF_1,\cF_2)  \to \det \chi(\cF_1,\cF_2\otimes
\cL)\,. 
$$
It remains to analyze the dependence of this isomorphism on $s$. 

Denote
$$
\Delta \subset H^0(X,\cL)
$$
the set of sections $s$ for which $\cG$ fails to be $0$-dimensional. 
This is a conical subset of codimension $>1$. For any $s_0 \in
H^0(X,\cL)\setminus \Delta$ the function $\phi_s \phi_{s_0}^{-1}$ 
is homogeneous in $s$ of degree
$$
\deg_s \phi_s \phi_{s_0}^{-1}  = \rk \chi(\cF_1,\cF_2) = 0 
$$
and regular away from $\Delta$, hence identically $1$. 
\end{proof}

\subsubsection{}\label{pull_square} 

We have the following 

\begin{Proposition}\label{p_Ksquare} 
For any $\cL_1$ and $\cL_2$ such that $\cL_1\otimes \cL_2 = 
\cK_X$ we have 
$$
\Kvir \otimes \bL_{\cL_1-\cL_2} = \textup{square} \otimes 
\bL_{c_1(\cL_1) \cap c_1(\cL_2)} \,,
$$
where the last term in pulled back by the Hilbert-Chow map. 
\end{Proposition}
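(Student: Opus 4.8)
The plan is to reduce the statement to an application of Lemma~\ref{l_otimesL} together with the symmetry result of Lemma~\ref{p_symm} and an explicit computation of Chern classes. First I would recall that, by definition of the virtual canonical bundle and the obstruction theory \eqref{defI},
$$
\Kvir = \det \Obs / \det \Def = \left(\det \chi(\cI,\cI)\right)^{-1} \otimes \det \chi(\cO_X) = \det \chi(\cF,\cF) \otimes \left(\det \chi(\cF,\cO_X)\right)^{-1} \otimes \left(\det \chi(\cF)\right)^{-1},
$$
using $[\cI] = [\cO_X] - [\cF]$ and the fact that $\det\chi(\cO_X,\cO_X)$ is trivial (a constant). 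Similarly $\bL_{\cL_1 - \cL_2} = \det H^*(X,\cF\otimes(\cL_1 - \cL_2)) = \det\chi(\cF,\cL_1)\otimes\left(\det\chi(\cF,\cL_2)\right)^{-1}$ after using that $\cF$ is $1$-dimensional so there is no distinction between $\chi(\cF\otimes\cL_i)$ and a $\Hom$-pairing up to duality bookkeeping; I would fix conventions carefully here.

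The key reduction is that the term $\det\chi(\cF,\cF)$, the only piece of $\Kvir$ that is quadratic in $\cF$, is a square \emph{after} correcting by a pullback from the Chow variety. Here is where Lemma~\ref{p_symm} enters: $\det\chi(\cF_1,\cF_2)\cong\det\chi(\cF_2,\cF_1)$ canonically on $\PT(X)\times\PT(X)$, so its restriction to the diagonal $\Delta\subset\PT(X)\times\PT(X)$ — which is exactly $\det\chi(\cF,\cF)$ — carries a symmetry, and by the Lemma of Section~\ref{s_sqrt} (symmetric line bundles on squares) it has a square root. More precisely I would run the Künneth argument from that Lemma directly: $c_1(\det\chi(\cF,\cF))$ is the diagonal restriction of a class of the form $\alpha\otimes 1 + \beta + 1\otimes\alpha$ with $\beta\in\Lambda^2 H^1$, and the diagonal restriction of the middle term, being a cup product of $H^1$-classes with itself, records precisely the obstruction to divisibility by $2$; the skew-symmetry of the cup product on $H^1$ then shows $c_1(\det\chi(\cF,\cF))$ differs from an even class by something that, by the Grothendieck–Riemann–Roch computation below, is pulled back from $\Chow(X)$.

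The remaining step is to identify the non-square residue. Combining the two displays above,
$$
\Kvir\otimes\bL_{\cL_1-\cL_2} = \det\chi(\cF,\cF)\otimes\left(\det\chi(\cF,\cO_X)\right)^{-1}\otimes\left(\det\chi(\cF)\right)^{-1}\otimes\det\chi(\cF,\cL_1)\otimes\left(\det\chi(\cF,\cL_2)\right)^{-1}.
$$
I would apply Lemma~\ref{l_otimesL} repeatedly to replace $\det\chi(\cF,\cL_i)$ by $\det\chi(\cF,\cO_X)$ at the cost of nothing — so the linear-in-$\cF$ terms collapse to a trivial contribution — leaving $\Kvir\otimes\bL_{\cL_1-\cL_2}$ equal, up to bundles of the form $\bL_\cG$ with $\cG$ a cycle class, to $\det\chi(\cF,\cF)$. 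Then a Grothendieck–Riemann–Roch calculation, using $\ch(\cF)$ supported in codimension $\ge 2$ and $\Td(X)$, pins down the first Chern class of $\det\chi(\cF,\cF)$ modulo $2$: the non-square part is governed by $\ch_2(\cF)^2$-type terms integrated against $X$, which by the projection formula and \eqref{degbL}–type reasoning is a bundle pulled back from the Chow variety, and I would show it equals $\bL_{c_1(\cL_1)\cap c_1(\cL_2)}$ on the nose by matching $c_1(\cL_1)c_1(\cL_2) = c_1(\cL_1)c_1(\cK_X - \cL_1)$ against the relevant GRR coefficient. \emph{The main obstacle} I anticipate is the bookkeeping of dualities and the precise meaning of $\chi(\cF,\cL_i)$ versus $H^*(X,\cF\otimes\cL_i)$ — getting signs and the placement of canonical twists right so that the linear terms genuinely cancel rather than leaving a stray $\cK_X$-twist — together with verifying that the ``square'' factor can be chosen canonically (or at least that the $2$-torsion ambiguity is harmless), which is really the content of invoking the symmetric-bundles Lemma correctly on the diagonal.
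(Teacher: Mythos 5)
There is a genuine gap in your treatment of the linear-in-$\cF$ terms. You propose to ``apply Lemma~\ref{l_otimesL} repeatedly to replace $\det\chi(\cF,\cL_i)$ by $\det\chi(\cF,\cO_X)$ at the cost of nothing,'' but Lemma~\ref{l_otimesL} does not give this. Its proof crucially requires the second argument of $\chi(-,-)$ to be a $1$-dimensional sheaf: one chooses a generic section $s$ of $\cL$ and uses that the cokernel of $\cF_2 \xrightarrow{s} \cF_2 \otimes \cL$ is $0$-dimensional, so that it can be filtered by skyscrapers and paired against $\det\cF_1 = \cO_X$. If instead $\cF_2$ is $\cO_X$ or a line bundle, that cokernel is a divisor (codimension $1$), the filtration argument collapses, and the statement is in fact false: by Serre duality $\det\chi(\cF,\cL_i) = \bL_{\cK_X \otimes \cL_i^{-1}}$, and these vary genuinely with $\cL_i$. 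Indeed, if the linear terms did collapse to something trivial you would be proving that $\Kvir\otimes\bL_{\cL_1-\cL_2}$ is a square on the nose with no Chow correction, contradicting the statement you are trying to prove.

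What the linear piece actually contributes, after applying Serre duality to $\chi(\cF,\cO_X)$ and writing $\cK_X = \cL_1\otimes\cL_2$, is $\bL_{-(\cO-\cL_1)(\cO-\cL_2) - 2\cL_2}$, i.e.\ a manifest square $\bL_{\cL_2}^{-2}$ times $\bL_{(\cO-\cL_1)(\cO-\cL_2)}^{-1}$. The residual class $(\cO-\cL_1)(\cO-\cL_2)$ is supported in codimension $\geq 2$, and it is \emph{there} that Lemma~\ref{l_otimesL} legitimately enters: the paper factors each $\cL_i$ as a ratio $\cA_i\cB_i^{-1}$ of very ample bundles, expands $(\cO-\cL_1)(\cO-\cL_2)$ into terms of the form $\cA_1\cA_2(1-\cA_1^{-1})(1-\cB_2^{-1})$ etc., each supported on a complete-intersection curve, and applies Lemma~\ref{l_otimesL} to reduce each to $\bL_{\cO_C}$ — a pullback from $\Chow(X)$ with the expected class $c_1(\cL_1)\cap c_1(\cL_2)$. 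Your proposed GRR replacement for this step is plausible but not carried out, and it would still need to produce an actual isomorphism of line bundles (not just an equality of $c_1$ mod $2$), so the reduction to codimension-$2$ supports cannot be skipped. Separately, note that the paper handles the quadratic-versus-linear split differently and more economically: rather than isolating $\det\chi(\cF,\cF)$ outright, it first treats the case $(\cL_1,\cL_2)=(\cO_X,\cK_X)$, where Serre duality yields $\Kvir\otimes\bL_{\cO-\cK} = \det\bigl[\chi(\cF,\cF) - 2\chi(\cF\otimes\cK_X)\bigr]$, manifestly a square once Proposition~\ref{p_symm} and the symmetric-line-bundle lemma are applied to the first factor; the general case is then reduced to this one by the expansion above. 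Your Künneth argument for $\det\chi(\cF,\cF)$ is fine and matches the paper, but in your arrangement it carries more weight than it needs to.
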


\begin{proof}
We first note that for $\cL_1=\cO_X$ and $\cL_2=\cK_X$ we have, 
by Serre duality 
$$
   \Kvir \otimes \bL_{\cO-\cK} = \det 
\big[ \chi(\cF,\cF) - 2 \chi(\cF\otimes \cK_X)
\big] \,,
$$
which is a square by Proposition \ref{p_symm}. On the 
other hand, 
$$
\bL_{\cL_1-\cL_2} \otimes \bL_{\cO-\cK}^{-1} = 
\bL_{\cL_1-\cO}^2  \otimes \bL_{(\cO-\cL_1)(\cO-\cL_2)} \,.
$$
Write 
$$
\cL_i = \cA_i \, \cB_i^{-1}\,, \quad i=1,2\,,
$$
where $\cA_i$ and $\cB_i$ are very ample. Then 
\begin{align*}
(\cO-\cL_1)(\cO-\cL_2) = \cA_1 \cA_2 \Big[
&(1-\cA_1^{-1})(1-\cA_2^{-1}) - (1-\cA_1^{-1})(1-\cB_2^{-1})\\
-&(1-\cB_1^{-1})(1-\cA_2^{-1}) + (1-\cB_1^{-1})(1-\cB_2^{-1}) \Big]
\,. 
\end{align*}
By Lemma \ref{l_otimesL}, all terms in the right-hand side produce 
bundles $\bL_{\cO_C}$ where $C\subset X$ is a complete intersection of 
two very ample divisors. By construction, such bundles are pulled 
back by the Hilbert-Chow map. Clearly, the resulting rational 
equivalence 
class of curves equals $c_1(\cL_1) \cap c_1(\cL_2)$. 
\end{proof}

\subsection{Square roots in M-theory} 

\subsubsection{}

Since the moduli space of stable membranes is still under
construction, we restrict ourselves here to numerical checks
under simplifying assumptions. 

\begin{Proposition}
If $\phi: B \to \MM(Z)$ is a map of a smooth curve
to the locus of local complete intersections then 
$$
\deg \phi^*\Kvir \equiv \int_{\textup{sweep of $B$}} c_2(Z)  
\mod 2 \,. 
$$
\end{Proposition}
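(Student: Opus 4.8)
The plan is to base-change the universal family along $\phi$, evaluate $\deg\phi^*\Kvir$ by Grothendieck--Riemann--Roch, and read off the congruence from a divisibility observation. First I would form $\pi\colon\mathcal C\to B$, the pullback of the universal curve, together with the universal map $f\colon\mathcal C\to Z$, and let $\mathcal N$ be the pullback of the universal relative normal sheaf; over the l.c.i.\ locus $\mathcal N$ is a vector bundle of rank $4$. By the description of the membrane obstruction theory in Section~\ref{s_M}, $[\mathcal N]=f^*[TZ]-[\mathbb T_\pi]$ in $K^0(\mathcal C)$, where $\mathbb T_\pi$ is the relative tangent complex of $\pi$; since a flat family of l.c.i.\ curves is an l.c.i.\ morphism, $\mathbb T_\pi$ is perfect of rank $1$ with $\det\mathbb T_\pi=\omega_\pi^{-1}$. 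As $\Kvir=\det\Obs\otimes(\det\Def)^{-1}$ depends only on the $K$-theory class $[\Def]-[\Obs]=[R\pi_*\mathcal N]$, we get $\phi^*\Kvir=(\det R\pi_*\mathcal N)^{-1}$, so $\deg\phi^*\Kvir=-\deg\det R\pi_*\mathcal N$, and since $-n\equiv n\bmod 2$ it suffices to compute $\deg\det R\pi_*\mathcal N$ modulo $2$.

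Next I would apply GRR for the l.c.i.\ morphism $\pi$, which gives
\[
\deg\det R\pi_*\mathcal N=\int_{\mathcal C}\big[\ch(\mathcal N)\,\Td(\mathbb T_\pi)\big]_{2}.
\]
Put $\kappa=c_1(\omega_\pi)$, so $c_1(\mathbb T_\pi)=-\kappa$; because $Z$ is Calabi--Yau, $c_1(f^*TZ)=0$, hence $c_1(\mathcal N)=\kappa$ and $\ch_2(f^*TZ)=-f^*c_2(Z)$. A short computation of the codimension-$2$ part of $\ch(\mathcal N)\,\Td(\mathbb T_\pi)$ then yields
\[
\big[\ch(\mathcal N)\,\Td(\mathbb T_\pi)\big]_{2}=-f^*c_2(Z)-\tfrac23\,\kappa^2+\tfrac43\,c_2(\mathbb T_\pi).
\]
Integrating over $\mathcal C$ and using the projection formula, $\int_{\mathcal C}f^*c_2(Z)=\int_{\textup{sweep of $B$}}c_2(Z)$, I obtain
\[
\deg\phi^*\Kvir=\int_{\textup{sweep of $B$}}c_2(Z)+\tfrac23\Big(\textstyle\int_{\mathcal C}\kappa^2-2\int_{\mathcal C}c_2(\mathbb T_\pi)\Big).
\]

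The conclusion is now forced by integrality alone: $\deg\phi^*\Kvir$ is the degree of a line bundle and $\int_{\textup{sweep of $B$}}c_2(Z)\in\Z$, so the remaining term $\tfrac23\big(\int_{\mathcal C}\kappa^2-2\int_{\mathcal C}c_2(\mathbb T_\pi)\big)$ lies in $\Z$; since $\gcd(2,3)=1$ this means $\int_{\mathcal C}\kappa^2-2\int_{\mathcal C}c_2(\mathbb T_\pi)=3m$ for some $m\in\Z$, whence that term equals $2m$, which is even. Therefore $\deg\phi^*\Kvir\equiv\int_{\textup{sweep of $B$}}c_2(Z)\bmod 2$. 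In particular neither the individual value of $\int_{\mathcal C}\kappa^2$ nor the behaviour of $\mathbb T_\pi$ at singular fibres (where $c_2(\mathbb T_\pi)\neq0$) needs to be evaluated.

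The step I expect to be the main obstacle is the $K$-theoretic identity $[\mathcal N]=f^*[TZ]-[\mathbb T_\pi]$, together with the assertion that no class pulled back from $B$ contaminates $\mathcal N$. This rests on the precise (in this paper only partially constructed) definition of the universal normal sheaf on the H{\o}nsen space / virtual Nash blowup, and on checking that its restriction to a fibre recovers the normal sheaf $N_f$ of an l.c.i.\ map; granting that, and the fact that $\pi$ is l.c.i.\ so that GRR applies verbatim, the remaining Chern-class bookkeeping is routine.
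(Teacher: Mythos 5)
Your proposal is correct and takes essentially the same route as the paper's proof: realize the family as an l.c.i.\ surface fibered over $B$, identify the rank-$4$ normal class with $f^*[TZ]$ minus the relative tangent class, apply GRR to get $\deg\phi^*\Kvir$ as the integral of the degree-$2$ part of $\ch(\mathcal{N})\Td$ of the relative tangent, and conclude by a divisibility-by-$3$/parity argument. The paper only packages the integrand differently, as $\tfrac13\int_S\bigl(c_2(Z)+4\ch_2(N)\bigr)$ with $N=N_S-T\pP^N$, and gets evenness from the integrality of $2\ch_2$, which is the same bookkeeping as your $\tfrac23\bigl(\int\kappa^2-2\int c_2\bigr)$ term; also note that your worry about the universal normal sheaf is moot, since the paper's definition of $N_f$ via the auxiliary embedding $C\subset Z\times\pP^N$ relativizes directly over $B$.
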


\begin{proof}
Let 
$$
S \subset B \times Z \times \pP^N
$$
be the surface corresponding to the map $\phi$, where 
$\pP^N$ is the auxiliary projective space as in Section
\ref{s_mapsf}. By hypothesis, $S$ is locally a complete 
intersection, hence has a normal bundle $N_S$.   We set 
$$
N = N_{S} - T \pP^N, 
$$
this is a rank 4 bundle on $S$. From definitions, 
$$
\phi^*\Kvir = \left( \det \pi_* N \right)^{-1} 
$$
where $\pi: S \to B$ is the projection. By Grothendieck-Riemann-Roch, we have 
$$
\ch \pi_* N  = \pi_*  \left(\ch  N \,  \frac{\Td S}{\Td B}
\right) = \pi_*  \left(\ch  N \,  \frac{\Td Z}{\Td N} 
\right) \,. 
$$
Since $\rk N = 4$, we have
$$
\frac{ \ch N} {\Td N}  = 4 - c_1(N) + \frac{4}{3} \, 
\ch_2(N) 
$$
while 
$$
\Td Z = 1+ \frac{1}{12} c_2(Z) 
$$
since $c_1(Z)=0$. Putting everything together, we see that 
\begin{align*}
\deg \pi_* N &= \frac{1}{3} \int_S 
\left( c_2(Z) + 4 \ch_2(N) \right) \\
&\equiv  \int_S c_2(Z)  \mod 2 
\end{align*}
because $2\ch_2$ is an integral characteristic class and division by 3
does not affect parity. 
\end{proof}

\subsubsection{}

We now compare the parity computations in DT and M-theories. 
We consider the case when 
$$
Z = 
\begin{matrix}
\cL_1 \oplus \cL_2  \\
\downarrow\\
X
\end{matrix} \,, \quad \cL_1 \otimes \cL_2 = \cK_X \,,
$$
is a rank 2 bundle over a 3-fold $X$. We have the following 

\begin{Proposition}
 For $Z$ as above, 
$$
c_2(Z) \equiv c_1(\cL_1) \, c_1(\cL_2) \mod 2 \,. 
$$
\end{Proposition}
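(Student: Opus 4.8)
The plan is to push the whole computation down to the base threefold $X$. First I would use that the bundle projection $\pi\colon Z\to X$ is a homotopy equivalence, so $\pi^*$ identifies $H^\bullet(X,\Z)$ with $H^\bullet(Z,\Z)$ and it is enough to compute $c_2(TZ)$ as a class on $X$. The vertical-tangent sequence of the total space of a vector bundle gives $0\to\pi^*(\cL_1\oplus\cL_2)\to TZ\to\pi^*TX\to 0$, so by the Whitney sum formula $c(TZ)=\pi^*\big(c(TX)\,(1+c_1(\cL_1))(1+c_1(\cL_2))\big)$, and reading off the degree-$4$ part,
\[
c_2(Z)=c_2(TX)+c_1(TX)\big(c_1(\cL_1)+c_1(\cL_2)\big)+c_1(\cL_1)c_1(\cL_2).
\]
Now I would feed in the Calabi-Yau hypothesis $\cL_1\otimes\cL_2=\cK_X$, which says $c_1(\cL_1)+c_1(\cL_2)=c_1(\cK_X)=-c_1(TX)$, so that
\[
c_2(Z)=c_2(TX)-c_1(TX)^2+c_1(\cL_1)c_1(\cL_2).
\]

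Thus the proposition is equivalent to the intrinsic congruence $c_2(TX)\equiv c_1(TX)^2\pmod 2$ in $H^4(X,\Z/2)$ for a complex threefold $X$, and this is where the real work lies. The plan here is to invoke Wu's formula on the closed oriented $6$-manifold $X$. Reducing mod $2$, $c_i(TX)$ becomes $w_{2i}(TX)$, and the odd Stiefel-Whitney classes of $TX$ vanish because $TX$ carries a complex structure (its total Stiefel-Whitney class is the mod-$2$ reduction of its total Chern class, hence concentrated in even degrees). Writing $v=1+v_1+v_2+v_3$ for the total Wu class of $X$ (with $v_k=0$ for $k>3$ and $w=\mathrm{Sq}(v)$): from $w_1=0$ one gets $v_1=0$; the degree-$2$ part gives $v_2=w_2$; and $w_3=0$ forces $v_3=\mathrm{Sq}^1 v_2$. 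Substituting into the degree-$4$ part of $w=\mathrm{Sq}(v)$ yields $w_4=\mathrm{Sq}^1 v_3+\mathrm{Sq}^2 v_2=\mathrm{Sq}^1\mathrm{Sq}^1 v_2+v_2^2=v_2^2$, using $\mathrm{Sq}^1\mathrm{Sq}^1=0$ and that $\mathrm{Sq}^2$ is the squaring operation on $H^2$. Hence $c_2(TX)\equiv w_4(TX)=w_2(TX)^2\equiv c_1(TX)^2$, and combining with the display above gives $c_2(Z)\equiv c_1(\cL_1)c_1(\cL_2)\pmod 2$.

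I expect the Wu-formula step to be the only non-formal ingredient: the congruence $c_2\equiv c_1^2$ is an honest identity of classes in $H^4$, not a numerical one (contrast the surface case, where $c_1^2+c_2=12\chi(\cO)$ lives in top degree and is just Noether's formula), so it cannot be checked by a pairing argument and one genuinely has to run the Wu-class bookkeeping on the $6$-manifold. Everything else — the homotopy equivalence $\pi$, the normal bundle sequence, the Whitney formula, and the substitution $c_1(\cL_1)+c_1(\cL_2)=-c_1(TX)$ — is routine. If one wants to allow non-compact $X$, it suffices to observe that only the pairing of $c_2(Z)$ with compact sweep surfaces enters the comparison in the surrounding propositions, and such pairings factor through a compact subvariety of $X$ to which the closed-manifold argument applies.
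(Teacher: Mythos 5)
Your first step --- pushing everything down to $X$ via the Whitney formula and the Calabi--Yau relation $c_1(\cL_1)+c_1(\cL_2)=-c_1(TX)$, so that the claim becomes $c_2(X)\equiv c_1(X)^2 \bmod 2$ --- is exactly the paper's first line. After that you diverge: the paper writes $c_2(X)+c_1^2(X)=12\,\Td_2(X)$ and proves the required evenness \emph{as a statement about pairings with complete divisors} $D\subset X$, deducing $6\int_D\Td_2(X)\in\Z$ from the integrality of $k\mapsto\chi(\cO_D(kD))$ together with Riemann--Roch; this is the form in which the proposition is used (pairing $c_2(Z)$ against compact sweep surfaces), and it works verbatim for noncompact $X$. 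You instead prove the stronger class-level congruence $w_4(TX)=w_2(TX)^2$ in $H^4(X;\Z/2)$ by Wu's formula, and your Wu bookkeeping ($v_1=0$, $v_2=w_2$, $v_3=\mathrm{Sq}^1v_2$, $w_4=\mathrm{Sq}^1\mathrm{Sq}^1v_2+v_2^2=v_2^2$) is correct --- for a \emph{closed} oriented $6$-manifold. So for compact $X$ your argument is complete and even gives more than is needed; note, however, that contrary to your closing remark the congruence \emph{can} be checked by a pairing argument, and that is precisely what the paper does, since the mod-$2$ statement is only ever used against $2$-cycles.

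The gap is your last sentence, and it matters because the $X$'s of interest in this paper (toric Calabi--Yau threefolds, local curves, local surfaces) are noncompact, where Wu's formula is unavailable. Saying that the pairing ``factors through a compact subvariety of $X$ to which the closed-manifold argument applies'' does not work as stated: a compact subvariety containing the sweep is (at most) a surface $D$, not a closed $6$-manifold, and for an arbitrary rank-$3$ complex bundle $E$ on a closed surface the pairing of $c_1^2(E)+c_2(E)$ need not be even (e.g.\ $E=\cO(1)\oplus\cO^{\oplus 2}$ on $\pP^2$ gives $1$). What saves the day is the extra geometric input $TX|_D\cong TD\oplus\cO_D(D)$, and with it the evenness of $\int_D\bigl(c_1^2(X)+c_2(X)\bigr)$ is exactly the paper's Riemann--Roch integrality for $\cO_D(kD)$. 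Alternatively you can keep your topological route by taking a smooth projective compactification $X\subset\bar X$, applying Wu on $\bar X$, and restricting, since $TX=T\bar X|_X$ and the kernel of reduction mod $2$ is $2H^4(\bar X;\Z)$. Either repair closes the gap; as written, the noncompact case --- the one the proposition is really for --- is not proved.
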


\begin{proof}
We have 
$$
c_2(Z) - c_1(\cL_1) \, c_1(\cL_2) = 
c_2(X) - c_1^2(X) \equiv c_2(X) +c_1^2(X) = 12 \Td_2(X) \,,
$$
so it remains to see that 
$$
6 \int_D \Td_2(X) \in \Z
$$
for any complete divisor $D\in X$. This is an easy consequence of the 
integrality of the function 
$$
k\mapsto \dim \chi(\cO_D(kD))
$$ 
and the Hirzebruch-Riemann-Roch formula.
\end{proof}

\section{Refined invariants}\label{s_refined} 

\subsection{Actions scaling the 3-form}

\subsubsection{}\label{ss_sc1} 

By far the most popular manifolds $Z$ for M-theory constructions 
have the form 
$$
Z = X \times \C^2 
$$
where $X$ is a Calabi-Yau 3-fold and $\C^2\cong \R^4$ is the 
space-time of our everyday experience (which may be 
replaced by an $A_n$ surface in everything that follows).  

 As a $\Cq$ torus 
for such $Z$ we can take the maximal torus 
$$
\begin{bmatrix}
 q \\ & q^{-1} 
\end{bmatrix} \subset SL(2) 
$$
acting
on the $\C^2$ factor.
Since 
$\cL_1 = \cL_2 = \cO_X$, we have 
$$
\tO_{\PT} = (-q)^\chi \, Q^\beta \, \Ovir \otimes \Kvir^{1/2} \,. 
$$
In this entire section, we will work with individual components of the 
DT moduli spaces and will drop the prefactor $(-q)^\chi \, Q^\beta$ for 
brevity. 

\subsubsection{}

Let $G$ be a connected group acting on $X$ and let 
$\bk$ be the determinant of this action, that is, 
$$
\bk = \textup{weight}  \left(\Lambda^{3} T^{1,0} X \right)\,. 
$$
The letter $\bk$ is supposed to remind of the canonical class $\cK_X$, 
except it is the \emph{inverse} of the $G$-weight of $\cK_X$. 

Let $\Gk$ 
be the minimal cover of $G$ 
on which the character $\bk^{1/2}$ is defined. 
We define 
$$
\Gk \hookrightarrow G_q = \Aut(Z,\Omega^5)^{\Cq} 
$$
by 
$$
 g \mapsto \left(g, \begin{bmatrix}
 \bk(g)^{-1/2} \\ & \bk(g)^{-1/2} 
\end{bmatrix} \right) \,.
$$
The square root $\bk^{1/2}$ is needed to make $\cL_1 = \cL_2$ 
as $\Gk$-equivariant line bundles. 

The results of this 
section will be particularly interesting if $\bk$ is
\emph{nontrivial}. Since $\Gk$ acts trivially on cohomology, we have 
$$
\bk \ne 1  \Rightarrow [\Omega^3_X] = 0 \in H^3(X) \,, 
$$
and so $X$ has to be noncompact for this to happen. Examples of 
$X$ with $\bk\ne 1$ include toric Calabi-Yau varieties, local curves, 
and local surfaces. 

We note that even for noncompact $X$ the PT moduli spaces may 
very well be compact which will be important below. 

\subsubsection{}

The main result of this section is the following 

\begin{Theorem}\label{t_kappa} 
For any Calabi-Yau 3-fold $X$, the sheaf $\tO_{\PT}$ has a
canonical $\Gk$-equivariant structure. If $\cM$ is a proper 
component of $\PT(X)$, then 
$$
\chi(\cM,\tO_{\PT} ) \in \Z\left[\bk^{\pm 1/2}\right] \subset 
K_{\Gk} (\pt) \,,
$$
that is, the $\Gk$ action on $\chi(\cM,\tO_{\PT} )$ factors 
through the character $\bk^{1/2}$. Further, the polynomial
$\chi(\cM,\tO_{\PT} )$ is symmetric with respect to 
$$
\bk^{1/2} \mapsto \bk^{-1/2}\,.
$$
\end{Theorem}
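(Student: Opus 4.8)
The plan is to reduce all three assertions to the structure of the obstruction theory of $\PT(X)$ on the Calabi--Yau threefold $X$. First I would dispose of the prefactor: since $X$ is Calabi--Yau and $Z=X\times\C^2$, one has $\cL_1=\cL_2=\cO_X$ as bundles, so $\bL_{\cL_1-\cL_2}$ is a power of $q$ and $(\Kvir\otimes\bL_{\cL_1-\cL_2})^{1/2}$ differs from $\Kvir^{1/2}$ only by such a power; absorbing it, together with $(-q)^\chi Q^\beta$, into a monomial prefactor — which depends only on the boxcounting and K\"ahler variables and so carries no $\Gk$-weight — it suffices to give $\Ovir\otimes\Kvir^{1/2}$ a canonical $\Gk$-structure, to show its Euler characteristic over a proper component $\cM$ lies in $\Z[\bk^{\pm1/2}]$, and to show that characteristic is invariant under $\bk^{1/2}\mapsto\bk^{-1/2}$.

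For the equivariant square root I would use the self-duality of the PT obstruction theory on a Calabi--Yau threefold. Relative Serre duality for $\cM\times X\to\cM$, whose relative dualizing sheaf is $\cK_X[3]$, hence $\bk^{-1}[3]$ on $\Gk$, combined with the symmetry of $\RHom(\mathbb{I}^\bullet,\mathbb{I}^\bullet)_0$, yields in $\Gk$-equivariant $K$-theory the identity $\Obs=\bk\otimes\overline{\Def}$; equivalently this is the Calabi--Yau specialization of Proposition \ref{p_Ksquare}, $\Kvir=\det\chi(\cF,\cF)\otimes\det\chi(\cF\otimes\cK_X)^{-2}$, whose first factor is a square by Proposition \ref{p_symm}. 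It follows that $\Kvir=\bk^{\,\rk\Def}\otimes(\det\Def)^{-2}$, so one may set
$$\Kvir^{1/2}:=\bk^{\,\rk\Def/2}\otimes(\det\Def)^{-1},$$
which is legitimate because $\bk^{1/2}$ is a character of $\Gk$. A short lemma records that this does not depend on the chosen two-term representative of the obstruction complex: any other choice alters $\Def$ by a class $U$ with $U=\bk\otimes\overline U$, and every such $U$ has $\det U=\bk^{\,\rk U/2}$, so the two candidate square roots agree. The step I expect to be the main obstacle is making this square root genuinely global and canonical over all of $\cM$ — upgrading the $K$-theoretic identity $\Obs=\bk\otimes\overline{\Def}$ (or the topological square root supplied by the first lemma of Section \ref{s_sqrt} together with Proposition \ref{p_symm}) to a canonical isomorphism of $\Gk$-equivariant line bundles, uniform in the choice of obstruction data; this is precisely the price of working with the symmetrized virtual structure sheaf.

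For the rigidity I would run virtual $K$-theoretic localization along an arbitrary one-parameter subgroup $\sigma\colon\Ct\to G$ lying in the kernel of $\bk$, i.e. acting trivially on $\cK_X$. For such $\sigma$ the identity $\Obs=\bk\otimes\overline{\Def}$ specializes to $\Obs|_\sigma=\overline{\Def}|_\sigma$, so the obstruction theory is genuinely symmetric and the virtual normal bundle of every fixed component $F\subset\cM^\sigma$ is anti-self-dual. Its inverse symmetrized Euler class is then merely a sign, and that sign combines with the residual factor $(\det\Def^{\mathrm{mov}})^{-1}$ of $\Kvir^{1/2}|_F$ to cancel all $\sigma$-dependence, so the contribution of $F$ is the analogous symmetrized Euler characteristic built from $F$'s induced obstruction theory times the monomial prefactor, carrying no $\sigma$-weight. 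Summing over the components $F$, which are proper because $\cM$ is, shows that $\chi(\cM,\tO_{\PT})$ restricted to $\sigma(\Ct)$ is constant; since $\sigma$ was arbitrary among subgroups killing $\bk$, the $\Gk$-character $\chi(\cM,\tO_{\PT})$ is constant on the identity component of $\ker\bk$, hence — by minimality of the cover $\Gk$ — a $\Z$-linear combination of powers of $\bk^{1/2}$, i.e. an element of $\Z[\bk^{\pm1/2}]\subset K_{\Gk}(\pt)=\Rep\Gk$.

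For the symmetry I would use virtual Serre duality on the proper space $\cM$: for an obstruction theory of virtual dimension $0$ it reads $\overline{\chi(\cM,\Ovir\otimes\mathcal{E})}=\chi(\cM,\Ovir\otimes\Kvir\otimes\overline{\mathcal{E}})$. Taking $\mathcal{E}=\Kvir^{1/2}$ and using $\overline{\Kvir^{1/2}}\otimes\Kvir=\Kvir^{1/2}$ gives $\overline{\chi(\cM,\Ovir\otimes\Kvir^{1/2})}=\chi(\cM,\Ovir\otimes\Kvir^{1/2})$. By the previous paragraph this class lies in $\Z[\bk^{\pm1/2}]$, on which the duality involution acts by $\bk^{1/2}\mapsto\bk^{-1/2}$; hence the class is symmetric, and the monomial prefactor, having no $\bk$-weight, does not interfere.
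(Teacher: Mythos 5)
Your skeleton (equivariant square root, then rigidity along $\Ker\bk$, then Serre duality for the $\bk^{1/2}\mapsto\bk^{-1/2}$ symmetry) parallels the paper, and the last step is exactly the paper's (weak virtual Serre duality from \cite{FG}); but the first two steps have genuine gaps. For the square root: your formula $\Kvir^{1/2}:=\bk^{\rk\Def/2}\otimes(\det\Def)^{-1}$ presupposes a \emph{global} $\Gk$-equivariant two-term representative in which $\Obs\cong\bk\otimes\Def^\vee$ as bundles. For an arbitrary global resolution only the self-duality of the difference $\Def-\Obs$ holds in $K$-theory, and taking determinants of that identity yields a tautology, so no formula for $\Kvir$ in terms of $\det\Def$ follows; the existence of a global symmetric representative is precisely the point you concede you cannot establish, so the construction never gets off the ground. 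Your uniqueness lemma is also false as stated: $U\cong\bk\otimes\overline U$ only gives $(\det U)^{2}\cong\bk^{\rk U}$, so $\det U$ may differ from $\bk^{\rk U/2}$ by $2$-torsion. The paper sidesteps all of this: nonequivariant existence of the square root is Proposition \ref{p_Ksquare} (via the symmetry $\bL_{12}\cong\bL_{21}$ of Proposition \ref{p_symm}), and Proposition \ref{p_equiv12} then puts a canonical $\Gk$-structure on \emph{any} chosen square root by a soft argument: the squaring map gives the section module $\bigoplus_n H^0(\Kvir^{1/2}(n))$ a $\Lie\Gk$-action, the Def/Obs duality \emph{at the fixed points} shows all weights lie in the weight lattice of $\Gk$, and the finite-dimensional modules integrate. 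No symmetric resolution is ever needed.

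The rigidity step is where the argument actually fails. First, the contribution of a fixed component does \emph{not} ``carry no $\sigma$-weight'': pairing a normal weight $w$ with its partner $\bk w^{-1}$, the symmetrized factor is $(\bk^{1/2}w^{-1/2}-\bk^{-1/2}w^{1/2})/(w^{1/2}-w^{-1/2})$, which evaluated at $t\sigma(z)$ depends on $z$ whenever $\bk(t)\neq 1$; it collapses to the sign $-1$ only on elements where $\bk=1$, i.e.\ only after restricting to $\Ker\bk$ itself. Second, even granting what your argument would then give — constancy of the character on (the identity component of) $\Ker\bk$ — that is strictly weaker than the theorem: the character $\mu-\bk\,\mu$, for any weight $\mu$ that is not a power of $\bk^{1/2}$, restricts to zero on $\Ker\bk$ yet does not lie in $\Z[\bk^{\pm1/2}]$. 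What one must prove is constancy on the $\bA$-\emph{cosets}, i.e.\ invariance of the character under translation by $\bA=\bT\cap\Ker\bk$, and for that one has to evaluate at $t\sigma(z)$ with $\bk(t)\neq1$, where the individual localization terms genuinely vary with $z$. This is the Bott--Taubes-type step in Proposition \ref{p_rig}: by properness of $\cM$ the restriction to $t\sigma(z)$ is a Laurent polynomial in $z$, the localization formula shows it has finite limits as $z\to0$ and $z\to\infty$ (given by the fixed loci twisted by $(-\bk^{\pm1/2})^{\sind_\fC}$), hence it is constant in $z$ for every $t$; only this yields coset-constancy and thus membership in $\Z[\bk^{\pm1/2}]$. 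Your proposal is missing exactly this polynomiality-plus-two-limits argument, and without it the conclusion does not follow.
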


For $\bk\ne 1$, we conjectured the polynomials from Theorem 
\ref{t_kappa} to agree with the motivically refined DT invariants 
studied in \cite{BBS,BJM,KS} and many other papers. This conjecture
has been proved by Davesh Maulik in \cite{Dmot}. 

\subsubsection{}

The conclusions of Theorem \ref{t_kappa} hold, in fact, for the sheaf 
$$
\tO_{\vir} = \Ovir \otimes \Kvir^{1/2}
$$
of any 
symmetric perfect obstruction theory on which a group $\Gk$ 
acts scaling the symmetry of the obstruction theory
\begin{equation}
\Obs \cong \Def^\vee \otimes \C(\bk) \label{symm_bk}\,, 
\end{equation}
where $\C(\bk)$ is a $1$-dimensional representation of weight 
$\bk$. 

For example, in Section \ref{s_Hodge}  one substitutes $t=\bk$ to 
see that 
\begin{equation}
\chi \left(\Ovir \otimes \Kvir^{1/2}\right) = 
\sum_{p,q} (-1)^{p-d/2} (-\bk)^{q-d/2} H^p(\Omega^q M)  
\label{hpk}
\end{equation}
where $H^p(\Omega^q M)$ are trivial representations of $\Gk$
and $d = \dim M$. 
The symmetry 
$$
h_{p,q} = h_{d-p,d-q} 
$$
of the Hodge diamond of $M$ implies the 
$\bk^{1/2} \mapsto \bk^{-1/2}$ symmetry of the polynomial
\eqref{hpk}. 

\subsubsection{} 

The basic properties of the index \eqref{hpk} are very classical 
and can be seen from many different angles, in particular, from the 
point of view of Morse theory as in \cite{WittenMorse}. We will find 
the Morse theory point of view to be 
very useful for general symmetric perfect 
obstruction theories. 

The triviality of the $\Aut(M)_0$ action on 
$H^p(\Omega^q M)$ is an example of \emph{rigidity}, see in particular 
\cite{BT}.  The argument used in Section \ref{s_rig} would be very 
familiar to anyone who read \cite{BT}.

\subsection{Localization for $\bk$-trivial tori}
\label{s_local}

\subsubsection{Equivariant structure on square roots}

In this section we assume $\cM$ to be a projective scheme with 
a symmetric perfect obstruction theory and an action of an  
algebraic group 
$G$ that scales the symmetry of the obstruction theory by 
a character $\bk$ as in \eqref{symm_bk}.  As before, we denote by 
$\Gk$ the minimal cover of $G$ on which the character 
$\bk^{1/2}$ is defined. 

We assume that 
the line bundle $\Kvir$ is a square in the nonequivariant 
Picard group of $\cM$. This holds for PT moduli spaces by 
Proposition \ref{p_Ksquare}.  Let $\Kvir^{1/2}$ be a choice 
of a square root of $\Kvir$. 

\begin{Proposition}\label{p_equiv12}
There is a canonical $\Gk$-action on $\Kvir^{1/2}$. 
\end{Proposition}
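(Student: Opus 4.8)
The statement to prove is Proposition~\ref{p_equiv12}: that a choice of square root $\Kvir^{1/2}$ of the virtual canonical bundle carries a canonical $\Gk$-action, where $\Gk$ is the minimal cover of $G$ on which $\bk^{1/2}$ exists. The plan is to first observe that $\Kvir$ itself is canonically $G$-equivariant — it is built functorially from the perfect obstruction theory $\Def-\Obs$, on which $G$ acts — and then to bootstrap an equivariant structure onto a fixed non-equivariant square root by the standard trick: the obstruction to equivariance lives in a group that is killed by passing to $\Gk$.

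**Key steps, in order.** First I would recall that $\Kvir = \det\Obs\otimes(\det\Def)^{-1}$ is manifestly $G$-equivariant, since the perfect obstruction theory (a two-term complex on $\cM$) is $G$-equivariant by hypothesis and $\det$ is functorial. Second, I would fix once and for all a non-equivariant square root $M := \Kvir^{1/2}$, which exists by the hypothesis that $c_1(\Kvir)$ is $2$-divisible (guaranteed for PT moduli spaces by Proposition~\ref{p_Ksquare}). Third — the crux — for each $g\in G$ the pullback $g^*M$ is again a square root of $g^*\Kvir\cong\Kvir$, so $g^*M\otimes M^{-1}$ is a $2$-torsion line bundle on $\cM$; but since $\cM$ is projective and connected, and the two square roots agree after squaring, one checks $g^*M\otimes M^{-1}$ depends on $g$ only through a homomorphism $G\to \Pic(\cM)[2]$ (or rather through the component group, as $G$ is connected this is actually trivial) — more precisely, the relevant obstruction is that $M$ transforms under $G$ only up to the character $\bk^{1/2}$, because $\Kvir$ contains the $\bk$-twist coming from the symmetry $\Obs\cong\Def^\vee\otimes\C(\bk)$. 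Concretely: Serre duality / the self-duality \eqref{symm_bk} forces $\Kvir \cong (\det\Def)^{-2}\otimes \C(\bk)^{\rk\Def}$ up to the Chow-pullback corrections, so a natural equivariant square root would be $(\det\Def)^{-1}\otimes\C(\bk)^{\rk\Def/2}$, and $\bk^{1/2}$ — hence $\Gk$ — is exactly what is needed to make sense of the half-power. Fourth, I would check that over $\Gk$ the ambiguity disappears: any two $\Gk$-equivariant structures on $M$ differ by a character $\Gk\to\C^\times$, and the one pinned down by matching with $(\det\Def)^{-1}\otimes\bk^{\rk/2}$ on the nose (after restricting to the open locus where the obstruction theory is Chow-trivial, then extending by density/normality of $\cM$) is canonical. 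Since $G$ is connected, $\Pic(\cM)[2]$-valued cocycles are trivial, so the equivariant structure is unique.

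**Main obstacle.** The hard part is step three: making "canonical" precise. The subtlety is that $\Kvir$ is only a \emph{square} non-equivariantly, and its natural equivariant avatar involves $\det\Def$ with a $\bk$-twist that is itself only half-integral — so one must verify that the square root of the $\bk$-twist (available precisely on $\Gk$) is the \emph{only} source of non-canonicity, i.e.\ that no further genuinely $2$-torsion choice enters. I expect this to follow from connectedness of $G$ together with $\Pic(\cM)$-rigidity (a section of a line bundle homogeneous of degree $0$ under a connected group is determined up to scalar, cf.\ the argument in Lemma~\ref{l_otimesL}), but nailing down that the scalar is canonically $1$ — rather than an undetermined unit — is where care is needed; one normalizes using the given identification \eqref{symm_bk} at, say, a fixed point or on the Chow-trivial open set and invokes separatedness of $\cM$ to propagate it.
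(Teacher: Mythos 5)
The paper's proof takes a fundamentally different — and cleaner — route than your proposal, and your proposal has a genuine gap in its existence argument.

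You correctly identify the central observation (the weight of $\Kvir^{1/2}$ at fixed points is, via the symmetry $\Obs\cong\Def^\vee\otimes\C(\bk)$, of the form $(\det\Def)^{-1}\otimes\bk^{\rk\Def/2}$, so $\bk^{1/2}$ is exactly what one must adjoin), and you correctly flag the main obstacle (pinning down the scalar). However, your step three misstates the obstruction. For connected $G$, the fact that each individual $g^*M\cong M$ is cheap — the map $g\mapsto[g^*M\otimes M^{-1}]\in\Pic(\cM)[2]$ is a continuous map to a discrete group, hence constant, hence trivial. The real obstruction to equivariance is to choosing these isomorphisms \emph{coherently}: any two choices of $g^*M\cong M$ differ by an element of $\Aut_{\cO_\cM}(M)=H^0(\cM,\cO^\times)=\C^\times$, so what must vanish is a class in $H^2(G,\C^\times)$, a central extension — not something controlled by ``$\Pic(\cM)[2]$-valued cocycles.'' Your proposal never closes this gap, and the proposed fix (normalize on a ``Chow-trivial'' open set and extend by density/normality) is vague and would be troublesome for moduli spaces that need not be reduced or normal.

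The paper sidesteps all of this by working at the Lie algebra level and then integrating. First, one reduces an equivariant structure on a line bundle over projective $\cM$ to an equivariant structure on the graded module $\Gamma(\Kvir^{1/2}(n))$, $n\gg0$. Next — and this is the decisive move — the fiberwise squaring map $\Kvir^{1/2}\to\Kvir$ endows $\Kvir^{1/2}$ with a \emph{canonical} $\fg$-module structure, $\fg=\Lie G=\Lie\Gk$: differentiating the $G$-action on $\Kvir$ and dividing by $2$ is unambiguous on the Lie algebra, so there is no cocycle and no sign to choose. The only remaining issue is whether this $\fg$-action integrates to $\Gk$, and this is settled by a weight count: by the self-duality of the obstruction theory, the $\ft$-weight of $\Kvir^{1/2}$ at any $\bT$-fixed point lies in $\textup{weights}(G)+\Z\bk^{1/2}=\textup{weights}(\Gk)$, and since sections of $\Kvir^{1/2}(n)$ are determined by their Taylor expansions at fixed points, every weight of $\Gamma(\Kvir^{1/2}(n))$ lies in the weight lattice of $\Gk$. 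A finite-dimensional $\fg$-module with weights in the lattice of $\Gk$ integrates uniquely to a $\Gk$-module, giving existence and canonicality in one stroke. You should adopt this Lie-algebra-plus-weight-lattice strategy: it is precisely what replaces your ``normalize the scalar'' step with something watertight.
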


\begin{proof}
Any line bundle $\cL$ on a projective scheme $\cM$ is 
uniquely determined by the module
$$
\Gamma(\cL) = \bigoplus_{n\gg 0} H^0(\cL(n))
$$
over the homogeneous coordinate ring of $\cM$ and an 
equivariant structure on $\cL$ is an equivariant structure
on this module. 

The fiberwise squaring map $\Kvir^{1/2} \to \Kvir$ gives 
$\Kvir^{1/2}$ and the corresponding module a canonical 
$\fg$-module structure, where 
$$
\fg = \Lie G = \Lie \Gk \,. 
$$
We claim that 
\begin{equation}
\textup{weights} \, \Gamma(\Kvir^{1/2}(n)) 
\subset \textup{weights}(G) + 
\Z \bk^{1/2} = \textup{weights}(\Gk) \,. \label{eq:7}
\end{equation}
Indeed, let $\bT\subset G$ be the maximal torus and $\ft = \Lie
\bT$.  By the duality between deformations and
obstructions, the $\ft$-weight of $\Kvir^{1/2}$ at any fixed point 
is a weight of $\Gk$. Since sections of $\Kvir^{1/2}(n)$ are uniquely 
determined by their series expansion at fixed points, \eqref{eq:7}
follows. 

Thus, $H^0(\Kvir^{1/2}(n))$ is a finite-dimensional $\fg$-module 
satisfying \eqref{eq:7}, hence integrates to a $\Gk$-module. 
\end{proof}

In particular, the proposition makes 
$$
\tO_\cM = \Ovir \otimes \Kvir^{1/2}
$$
a $\Gk$-equivariant sheaf on $\cM$. 

\subsubsection{}

Fix a maximal torus $\bT \subset G$ and a subtorus 
$\bA \subset \bT$ in the kernel of $\bk$. This, in particular, 
means that $\bA$ acts canonically on $\Kvir^{1/2}$. 
Let $\cM^\bA\subset \cM$ be the locus of $\bA$-fixed points.
Equivariant localization for virtual Euler characteristics takes the 
following form \cite{GP,FG}. 

Restricted
to $\bA$, the obstruction theory decomposes
$$
\left(\Def-\Obs\right)\Big|_{\cM^\bA} =  
\left(\Def-\Obs\right)^\textup{fixed} \oplus 
\left(\Def-\Obs\right)^\textup{moving}
$$
where the moving part is the one that transforms in nontrivial
representations
of $\bA$. The triviality of $\bk$ on $\bA$ implies 
$$
\Obs ^\textup{fixed} \cong \left(
\Def ^\textup{fixed}\right)^\vee \otimes \C(\bk)\,,
$$
and similarly for the moving part. In particular, the fixed part of 
the obstruction theory defines a symmetric perfect obstruction 
theory for the fixed locus, with its own virtual structure sheaf. 
We have 
\begin{equation}
\chi(\cM,\tO_\cM) = \chi\left(\cM^\bA, \cO_{\cM^\bA,\textup{vir}} 
\otimes \Kvir^{1/2}\big|_{\cM^\bA} \otimes \Sd \cN^\vee \right)\,,
\label{vir_loc}
\end{equation}
where 
$$
\cN = \left(\Def-\Obs\right) ^\textup{moving} 
$$
is the virtual normal bundle.  

Our next goal is to rewrite the formula \eqref{vir_loc} using a
specific choice of a square root of the virtual canonical 
bundle of the fixed loci.

\subsubsection{}

The $\bA$-weights that appear in $\cN$ 
partition $\Lie \bA$ into finitely many chambers. We fix 
one chamber $\fC$, this separates all weights into positive and 
negative, so we can write 
$$
\cN =  
\cN_{+} \oplus 
\cN_{-} 
$$
with 
$$
\cN_{-} = - \cN_{+}^\vee 
\otimes \C(\bk) \,. 
$$
We have the following 

\begin{Lemma}
The nonequivariant line bundle
 $\det \cN_+$ does not depend on the choice of 
$\fC$ and satisfies 
$$
\det \cN = (\det \cN_+)^2 \,. 
$$
\end{Lemma}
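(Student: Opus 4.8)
The plan is to prove both assertions — independence of $\det\cN_+$ from the chamber $\fC$, and the factorization $\det\cN = (\det\cN_+)^2$ — simultaneously, by exploiting the duality $\cN_- = -\cN_+^\vee\otimes\C(\bk)$ together with the fact that $\bA$ lies in the kernel of $\bk$. First I would observe that, as $K$-theory classes with $\bA$-action, $\cN_+$ and $\cN_-$ together exhaust $\cN$, and $\det$ is multiplicative on short exact sequences (equivalently, on sums in $K$-theory), so $\det\cN = \det\cN_+\otimes\det\cN_-$. From $\cN_- = -\cN_+^\vee\otimes\C(\bk)$ we get $\det\cN_- = (\det\cN_+^\vee)^{-1}\otimes\C(\bk)^{-\rk\cN_+}$, where the exponent records that $\det$ of a virtual class of rank $r$ twisted by a character $\chi$ acquires $\chi^{r}$. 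Hence $\det\cN = \det\cN_+\otimes\det\cN_+\otimes\C(\bk)^{-\rk\cN_+}$ up to the obvious identification $\det\cN_+^\vee = (\det\cN_+)^{-1}$. Since $\bA$ acts trivially on $\bk$, restricting to $\bA$ kills the character factor, and one is left with $\det\cN\big|_{\bA} = (\det\cN_+)^{2}$ as $\bA$-equivariant, hence \emph{a fortiori} as nonequivariant, line bundles. I should be careful that the identity $\det(V)\otimes\det(V^\vee)=\cO$ really is the canonical one here; this is routine.

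For independence of the chamber, the key point is that two adjacent chambers $\fC$, $\fC'$ differ by flipping the sign of the $\bA$-weights lying on a single wall, i.e.\ by moving a sub-class $W$ with $W\subset\cN_+$ for $\fC$ to $\cN_-$ for $\fC'$ (and correspondingly its dual-twist $-W^\vee\otimes\C(\bk)$ moves the other way). Because any two chambers are connected by a chain of such wall-crossings, it suffices to treat one wall-crossing. Under it, $\cN_+' = (\cN_+ - W) + (-W^\vee\otimes\C(\bk))$ as $K$-classes. Then
\begin{equation}
\det\cN_+' = \det\cN_+\otimes(\det W)^{-1}\otimes(\det W^\vee)^{-1}\otimes\C(\bk)^{-\rk W} = \det\cN_+\otimes\C(\bk)^{-\rk W},
\end{equation}
using $\det W\otimes\det W^\vee=\cO$ canonically. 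Restricting to $\bA$ again kills $\C(\bk)$, so $\det\cN_+' = \det\cN_+$ nonequivariantly. Strictly one must also check that the \emph{weights themselves} on the wall pair off under $w\mapsto -w$ via the duality $\cN_-=-\cN_+^\vee\otimes\C(\bk)$ combined with triviality of $\bk$ on $\bA$ — that is exactly what makes $W$ and its partner swap roles cleanly — and that no weight lies identically on every wall (weights genuinely appearing in the virtual normal bundle are nonzero by definition of $\cN$ as the moving part).

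The main obstacle I anticipate is bookkeeping the character twists $\C(\bk)$ honestly rather than prematurely restricting to $\bA$: one wants the \emph{full} $G$- (or $\Gk$-) equivariant statement $\det\cN = (\det\cN_+)^2\otimes\C(\bk)^{-\rk\cN_+}$ to be visibly correct, with all isomorphisms canonical, before invoking $\bk|_{\bA}=1$. A secondary subtlety, worth a sentence in the write-up, is that $\cN_+$ is only a virtual class (a difference of honest $\bA$-moving bundles), so "$\det\cN_+$" means the determinant line of that virtual class and "$\rk\cN_+$" its virtual rank; the wall-crossing formula above and the duality manipulations are valid at the level of $K$-theory and determinant lines precisely because $\det$ descends to a homomorphism $K(\cM^\bA)\to\Pic(\cM^\bA)$. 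Once those conventions are pinned down, both claims fall out of the two displayed computations.
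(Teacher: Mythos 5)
Your proof is correct. The factorization half is essentially the paper's argument: from $\cN_-=-\cN_+^\vee\otimes\C(\bk)$ you get $\det\cN_-=\det\cN_+\otimes\C(\bk)^{-\rk\cN_+}$, and the character factor disappears nonequivariantly (the paper simply suppresses the $\C(\bk)$-twist from the outset, since the statement is about nonequivariant line bundles). Where you genuinely differ is the chamber independence: you argue by wall-crossing, identifying the piece that flips sign at a facet as $W+(-W^\vee\otimes\C(\bk))$ and then connecting two arbitrary chambers by a chain of adjacent ones, whereas the paper compares two arbitrary chambers $\fC_1,\fC_2$ in a single step: writing $\cN_{\pm,\pm}$ for the parts of $\cN$ of given signs on the two chambers, the duality gives $\cN_{+,-}=-\cN_{-,+}^\vee$ (up to the $\bk$-twist), hence $\det\cN_{+,-}=\det\cN_{-,+}$, and the two candidate determinants $\det\cN_{+,+}\otimes\det\cN_{+,-}$ and $\det\cN_{+,+}\otimes\det\cN_{-,+}$ coincide. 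The paper's four-fold decomposition buys economy: it needs neither the existence of a chain of adjacent chambers nor the bookkeeping of exactly which weights vanish on a given facet, and it delivers $\det\cN_+=\det\cN_-$ (hence the squaring identity) as the special case $\fC_2=-\fC_1$. Your route, in exchange, keeps the full equivariant statement $\det\cN=(\det\cN_+)^2\otimes\C(\bk)^{-\rk\cN_+}$ visible throughout, which is a harmless and even useful refinement, though not needed for the lemma as stated; your caveats (virtual ranks, weights on walls pairing under $w\mapsto -w$ because $\bk$ is trivial on $\bA$, nontriviality of all weights of the moving part) are exactly the right ones and are easily discharged.
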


\begin{proof}
Let $\fC_1$ and $\fC_2$ be two chambers and denote by 
$\cN_{\pm,\pm}$ the parts of $\cN$ spanned by characters
of given sign on $\fC_1$ and $\fC_2$. Then 
$$
\cN_{+,-} = - \cN_{-,+}^\vee
$$
whence 
$$
\det \cN_{+,-} = \det \cN_{-,+}
$$
which implies the independence. In particular $\det \cN_+ =
\det \cN_-$ and hence 
$$
\det \cN = \det \cN_+  \det \cN_- = (\det \cN_+)^2 \,. 
$$
\end{proof}

We give $\det \cN_+$ the canonical 
$\bT_{\bk}$-equivariant structure provided 
by the proof of Proposition \ref{p_equiv12} and set 
\begin{equation}
\cK_{\cM^\bA}^{1/2} = \det \cN_+
\otimes \Kvir^{1/2} \big|_{\cM^\bA} \label{eq:9}
\end{equation}
This provides a consistent choice of the square-root for the fixed
loci which depends on the choice of the square root on $\cM$.  

\subsubsection{}

We denote 
$$
\tO_{\cM^\bA} = \cO_{\cM^\bA,\textup{vir}} \otimes
\cK_{\cM^\bA}^{1/2} 
$$
and let $\rho$ be a map from equivariant K-theory to its completion
such that 
$$
\rho(A\oplus B) = \rho(A) \otimes \rho(B) 
$$
and 
$$
\rho(\cL) = \frac{\bk^{1/2}-\bk^{-1/2} \cL}{\cL-1} 
$$
for a line bundle $\cL$. Rewriting \eqref{vir_loc} using \eqref{eq:9} 
and duality, we obtain the following 

\begin{Proposition}\label{p_rho}
For any chamber $\fC\subset \Lie \bA$, we have 
\begin{equation}
\chi(\cM,\tO_\cM) = \chi\left(\cM^\bA, \tO_{\cM^\bA} 
\otimes \rho(\cN_{+}) \right) \,. 
\label{vir_loc2}
\end{equation}
\end{Proposition}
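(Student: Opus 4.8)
The plan is to obtain \eqref{vir_loc2} as a purely $K$-theoretic rewriting of the virtual localization formula \eqref{vir_loc}; the content is the identity
\[
\Kvir^{1/2}\big|_{\cM^\bA}\otimes \Sd\cN^\vee \;=\; \cK_{\cM^\bA}^{1/2}\otimes \rho(\cN_{+})
\]
in the completed, localized $\Gk$-equivariant $K$-theory of $\cM^\bA$ (i.e.\ after inverting $1-$character for the $\bA$-characters occurring in $\cN$). Granting this, one substitutes it into \eqref{vir_loc} and uses $\tO_{\cM^\bA}=\cO_{\cM^\bA,\textup{vir}}\otimes\cK_{\cM^\bA}^{1/2}$ to conclude.

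First I would reduce $\Sd\cN^\vee$ to an expression in $\cN_{+}$ alone, using the symmetry $\cN_{-}=-\cN_{+}^\vee\otimes\C(\bk)$ (the ``duality'' input, coming from $\Obs\cong\Def^\vee\otimes\C(\bk)$). Thus $\cN^\vee=\cN_{+}^\vee-\cN_{+}\otimes\C(\bk^{-1})$, and by multiplicativity of $\Sd$ it suffices, by the splitting principle, to verify the per-summand identity: for a line bundle $x$ appearing in $\cN_{+}$ (so $-\bk x^{-1}$ appears in $\cN_{-}$),
\[
\Sd\!\left(x^{-1}-\bk^{-1}x\right)=\frac{1-\bk^{-1}x}{1-x^{-1}}=\bk^{-1/2}\,x\otimes\rho(x),
\]
which is a one-line rearrangement from $\rho(x)=\dfrac{\bk^{1/2}-\bk^{-1/2}x}{x-1}$ --- exactly the manipulation that already exhibits \eqref{locKvir} as $\prod_i\rho(a_i)$ at an isolated fixed point. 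Multiplying over the (virtual, signed) summands of $\cN_{+}$ gives $\Sd\cN^\vee=\bk^{-\rk\cN_{+}/2}\otimes\det\cN_{+}\otimes\rho(\cN_{+})$, where $\det\cN_{+}$ here carries the equivariant structure read off tautologically from its summands.

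It then remains to match $\bk^{-\rk\cN_{+}/2}\otimes\det\cN_{+}$ with the line bundle $\det\cN_{+}$ as equipped in \eqref{eq:9}, namely with the equivariant structure of a canonical square root of $\det\cN$ in the sense of Proposition \ref{p_equiv12}. Non-equivariantly these agree by the preceding Lemma, $\det\cN=(\det\cN_{+})^2$; equivariantly, substituting $\cN_{-}=-\cN_{+}^\vee\otimes\C(\bk)$ yields $\det\cN=(\det\cN_{+})^2\otimes\C(\bk)^{-\rk\cN_{+}}$, so its canonical square root is precisely the tautological $\det\cN_{+}$ twisted by $\bk^{-\rk\cN_{+}/2}$. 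Hence $\cK_{\cM^\bA}^{1/2}\otimes(\Kvir^{1/2}|_{\cM^\bA})^{-1}=\det\cN_{+}$ coincides with $\bk^{-\rk\cN_{+}/2}\otimes\det\cN_{+}$, the displayed identity follows, and \eqref{vir_loc} becomes \eqref{vir_loc2}.

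The main obstacle, such as it is, is the consistent bookkeeping of this half-integer power of $\bk$: the determinant line bundle built tautologically from $\cN_{+}$ and the one defined as a square root of $\det\cN$ differ by $\bk^{-\rk\cN_{+}/2}$, and this is exactly the factor thrown off by the symmetric-algebra computation, so the two $\bk^{1/2}$-twists must be tracked with care ($\rk\cN_{+}$ being a signed virtual rank, which is harmless but formal). A minor secondary point is to evaluate $\rho$ and $\Sd$ in the same completion, so that $\cL-1$ is invertible and the splitting principle applies to the virtual class $\cN_{+}$; both are standard features of virtual localization.
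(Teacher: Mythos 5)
Your proposal is correct and follows essentially the same route as the paper, whose entire argument is the remark that \eqref{vir_loc2} is obtained by ``rewriting \eqref{vir_loc} using \eqref{eq:9} and duality''; you have simply made explicit the per-line-bundle identity $\Sd(x^{-1}-\bk^{-1}x)=\bk^{-1/2}x\otimes\rho(x)$ and the bookkeeping showing that the canonical square-root equivariant structure on $\det\cN_+$ is the tautological one twisted by $\bk^{-\rk\cN_+/2}$, which is exactly the intended manipulation.
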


\subsection{Morse theory and rigidity}

\subsubsection{Virtual index of a fixed component}

We denote 
\begin{align}
  \sind_\fC &= 
\rk \cN_+\notag \\
&= \rk \Def_+ - \rk \Def_- \label{sind} \,,
  \end{align}
where the equality between the two lines follows from the 
duality between $\Obs_+$ and $\Def_-$; this also shows the 
independence of the second line on a particular representative
of the obstruction theory. 

Clearly, $\sind_\fC$ is a locally constant function on $\cM^\bA$ 
which we call the virtual index of a fixed component. 

\subsubsection{Rigidity}\label{s_rig}

\begin{Proposition}\label{p_rig}
The $\Gk$ action on $\chi(\cM,\tO_\cM)$ factors through the 
character $\bk^{1/2}$. In fact, 
\begin{equation}
  \label{eq:11}
  \chi(\cM,\tO_\cM) = \chi\left(\cM^\bA, \tO_{\cM^\bA}  \otimes
    (-\bk^{1/2})^{\sind_\fC} \right)
\end{equation}
for any chamber $\fC$ in the Lie algebra of
$$
\bA = \bT \cap \Ker \bk \,,
$$
where $\bT$ is a maximal torus of $G$. 
\end{Proposition}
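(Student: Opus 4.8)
The plan is to run the Bott--Taubes / Atiyah--Hirzebruch rigidity argument in its K-theoretic, virtual incarnation, with the localization formula \eqref{vir_loc2} of Proposition~\ref{p_rho} as the one and only analytic input. Since $\cM$ is projective, $\chi(\cM,\tO_\cM)$ is a genuine virtual character of $\Gk$; restricting it along a cocharacter $\sigma\colon\Ct\to\bA$ produces an honest Laurent polynomial $P(z)$ in $z$ with coefficients in $\Z[\bk^{\pm1/2}]$ -- in particular with \emph{no poles} in $z$. Because $\bA\subset\Ker\bk$, the pulled-back variable $z$ and the ``rigid'' variable $\bk^{1/2}$ are independent. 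First I would record the key local fact that the square root $\cK^{1/2}_{\cM^\bA}=\det\cN_+\otimes\Kvir^{1/2}\big|_{\cM^\bA}$ of \eqref{eq:9} carries no $\bA$-weight at all: using the self-duality $\Obs\cong\Def^\vee\otimes\C(\bk)$ of \eqref{symm_bk} together with $\bk\big|_\bA=1$, the $\bA$-moving part of $\Kvir\big|_{\cM^\bA}$ is $(\det\cN_+)^{-2}$ up to a power of $\bk$, so its square root exactly cancels the prefactor $\det\cN_+$, leaving a pure power of $\bk$. Hence $\tO_{\cM^\bA}=\cO_{\cM^\bA,\textup{vir}}\otimes\cK^{1/2}_{\cM^\bA}$ is a $z$-independent coherent sheaf on $\cM^\bA$, and $\sind_\fC$ is its locally constant virtual index.

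Granting this, I would fix a chamber $\fC$ containing a generic $\sigma$ (so $\cM^{\sigma(\Ct)}=\cM^\bA$) and expand the right-hand side of \eqref{vir_loc2} about $z=0$. All $\bA$-weights occurring in $\cN_+$ are positive on $\fC$, so each elementary factor $\rho(\cL)=\frac{\bk^{1/2}-\bk^{-1/2}\cL}{\cL-1}$ is regular at $z=0$ with value $-\bk^{1/2}$; for virtual $\cN_+$ one divides by an invertible power series, so $\rho(\cN_+)$ is a power series in $z$ with leading coefficient the locally constant function $(-\bk^{1/2})^{\sind_\fC}$ on $\cM^\bA$. Pushing forward term by term against the $z$-independent sheaf $\tO_{\cM^\bA}$ gives $P(z)\in\Z[\bk^{\pm1/2}][[z]]$, i.e.\ no negative powers of $z$. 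Repeating the expansion at $z=\infty$, where now $\rho(\cL)\to-\bk^{-1/2}$, yields $P(z)\in\Z[\bk^{\pm1/2}][[z^{-1}]]$. A Laurent polynomial lying in both rings is constant in $z$, and its value is the $z=0$ limit $\chi\big(\cM^\bA,\tO_{\cM^\bA}\otimes(-\bk^{1/2})^{\sind_\fC}\big)$, which is exactly \eqref{eq:11}. Since $\sigma$ was an arbitrary cocharacter of $\bA$, $\chi(\cM,\tO_\cM)$ is $\bA$-invariant; as $\bA=\bT\cap\Ker\bk$ and $G$ is connected, its $\bT$-character is therefore a Laurent polynomial in $\bk^{1/2}$, which is the asserted factoring through the character $\bk^{1/2}$.

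The main obstacle, and the real content, is the cancellation in the first step: that the prescribed square root \eqref{eq:9} makes $\tO_{\cM^\bA}$ \emph{literally} $\bA$-equivariantly trivial, not merely trivial up to a line bundle with $\bA$-weights -- this is the precise point at which the choice of the factor $\det\cN_+$ in \eqref{eq:9}, and ultimately the symmetry \eqref{symm_bk} of the obstruction theory, is used, and without it the limit argument would manufacture spurious powers of $z$. A secondary technical point is to justify the term-by-term comparison between the rational function on the right of \eqref{vir_loc2} and the Laurent polynomial $P(z)$; this rests on properness of $\cM$ (hence of $\cM^\bA$), which is what guarantees that $P(z)$ genuinely is a pole-free Laurent polynomial, so that its two one-sided expansions agree and the ``power series at both ends $\Rightarrow$ constant'' step is legitimate. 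The argument is formally identical to the one in \cite{BT}, with equivariant Euler characteristics of coherent sheaves replacing the index of the Dirac operator and with the virtual localization formula \eqref{vir_loc2} replacing the Atiyah--Bott fixed point theorem.
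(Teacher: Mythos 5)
Your argument is correct and is essentially the same as the paper's: both rest on the localization formula \eqref{vir_loc2}, properness of $\cM$ (to get a genuine Laurent polynomial in $z$), and the comparison of the $z\to 0$ and $z\to\infty$ expansions to force $z$-independence. The one place you go beyond the paper's explicit text—spelling out that $\cK^{1/2}_{\cM^\bA}=\det\cN_+\otimes\Kvir^{1/2}|_{\cM^\bA}$ has trivial $\bA$-weight so that $\tO_{\cM^\bA}$ really is $z$-independent—is a helpful clarification of what the paper builds into the setup of \eqref{eq:9} and Proposition \ref{p_rho} rather than a different route.
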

\begin{proof}
It is enough to show that the $\bT_{\bk}$ action on $\chi(\cM,\tO_\cM)$
factors through the character $\bk^{1/2}$.  Since $\cM$ is compact, 
the $\bT_{\bk}$-character of $\chi(\cM,\tO_\cM)$ is a Laurent 
polynomial which, we claim, is constant on the $\bA$-cosets.  

Let $\sigma: \C^\times \to \bA$ be a generic homomorphism and let 
$\fC$ be the chamber containing $d\sigma \in \Lie\bA$. 
Then all characters that appear in $\cN_+$ go to zero on 
$\sigma(z)$ and $z\to 0$ and to infinity as $z\to\infty$. Therefore, for any $t\in\bT$ we have 
\begin{alignat*}{2}
   \chi(\cM,\tO_\cM)\big|_{t \sigma(z)} 
 &= \chi\left(\cM^\bA, \tO_{\cM^\bA}  \otimes
    (-\bk^{1/2})^{\sind_\fC} \right)  + O(z)\,, \quad &&z\to 0 \\
&= \chi\left(\cM^\bA, \tO_{\cM^\bA}  \otimes
    (-\bk^{1/2})^{-\sind_{\fC}} \right)  + O(z^{-1})\,, \quad &&z\to \infty
  \,. 
\end{alignat*}
Since this a Laurent polynomial in $z$, it is a constant equal to 
its value at either $0$ or $\infty$. Since $\sigma$ was generic, 
the claim follows. 
\end{proof}

\subsubsection{Conclusion of the proof of Theorem \ref{t_kappa}}

It remains to show the $\bk\mapsto\bk^{-1}$ symmetry. 
Since the virtual dimension of any self-dual theory is $0$,
from 
the weak Serre duality theorem of \cite{FG} we get
$$
\chi(\tO_\cM) = \chi(\tO_\cM)^\vee
$$
and since $\bk^\vee = \bk^{-1}$ we are done. 

\section{Index vertex and refined vertex} \label{s_index_vertex}

\subsection{Toric Calabi-Yau 3-folds}

\subsubsection{}

In this section, we specialize the discussion of Section
\ref{s_refined} to toric Calabi-Yau 3-folds $X$. For such $X$, the 
torus 
$$
\bA = \bT \cap \Ker \bk \cong \left(\C^\times\right)^2 
$$
acts with isolated fixed points on the Hilbert scheme of 
curves, and hence K-theoretic DT invariants of $X$ may be 
given by a combinatorial formula of the same flavor as 
the localization formula for cohomological DT invariants of $X$ \cite{mnop}, 
known to many in the formalism of the topological vertex \cite{AKMV,ORV}. 

\subsubsection{}

Let 
$$
\Delta(X) \subset \left(\Lie \bT\right)^*
$$
be toric polyhedron, that is, 
the image of the moment map for some K\"ahler class on $X$.
The projection of its 1-skeleton to $\left(\Lie \bA\right)^*$ is 
known as the toric diagram of $X$. The combinatorial type of 
these objects does not depend on the choice of the K\"ahler class. 

\begin{figure}[!htbp]
  \centering
   \scalebox{0.6}{\includegraphics{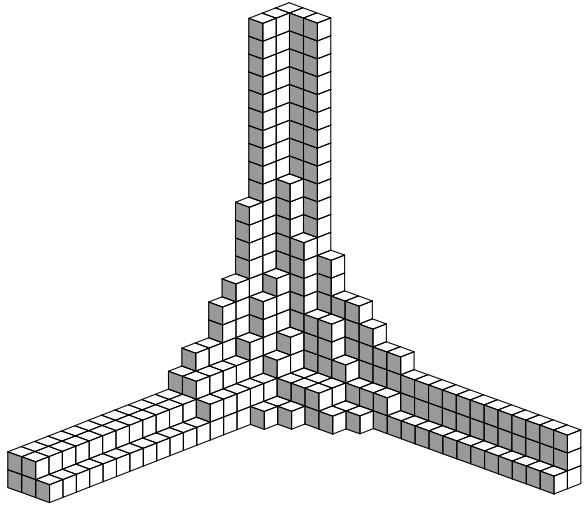}}
 \caption{A 3-legged 3-dimensional partition}
  \label{f_part}
\end{figure}

By a 3-dimensional partition with legs, we mean an object of 
kind shown in Figure \ref{f_part}. These correspond bijectively to
$\bA$-fixed $1$-dimensional ideals in $\cO_{\C^3}$.  An $\bA$-fixed
ideal sheaf on a general toric $X$ is described by a collection 
$\{\pi_v\}$ of  3-dimensional partitions placed at the
vertices of $\Delta(X)$ that glue along the edges of $\Delta(X)$. 
Additionally, nontrivial legs are not allowed along unbounded edges 
of $\Delta(X)$. These rules are illustrated in Figure \ref{f_Om1Om1}.

\subsubsection{} 

The description of $\PT(X)^\bA$ is very similar, see \cite{PT2}. We discuss
localization on the Hilbert scheme here, because we want to make 
contact with the refined vertex of Iqbal, Kozcaz, and Vafa \cite{IKV}. 

Since $\Hilb(X)$ is never compact, Proposition \ref{p_rig} may not 
be directly applied to it. However, if $\PT(X)$ is compact and if we 
assume the conjectural formula \eqref{ePTDT}, then the only chamber 
dependence of the refined invariants comes from the Hilbert scheme 
of points of $X$.  

We have 
$$
\chi\left(X, \cK_X^{1/2} \otimes 
(TX + \cK_X - T^*X- \cK_X^*)\right) = \sum_{x\in X^\bT} 
\rho(T_x X)
$$
and hence by Conjecture \ref{c_DT0} we have 
\begin{multline}
\lim_{z\to 0} \,
\chi\left(\Hilb(X,\textup{points}),\tO_\textup{vir}\right) \Big|_{t\sigma(z)}= \\
\Sd \frac{q}{(1-q\bk^{1/2})(1-q\bk^{-1/2})} 
\sum_{x\in X^T} \left(-\bk^{1/2}\right)^{\sind_\sigma(x)}\label{atoinf} \,,
\end{multline}
for any homomorphism $\sigma: \Ct \to \bA$, where 
$$
\sind_\sigma(x) = \dim (T_x X)_+- \dim (T_x X)_-  = \pm 1
$$
is the index of $x\in X^T$ with respect to $\sigma$. 
The sum over $x$ in \eqref{atoinf} is an equivariant analog of the 
Poincar\'e polynomial of $X$; it jumps across the walls in $\Lie A$ 
dual to the noncompact edges of toric diagram. 

\begin{figure}[!htbp]
  \centering
   \rotatebox{-90}{\scalebox{0.6}{\includegraphics{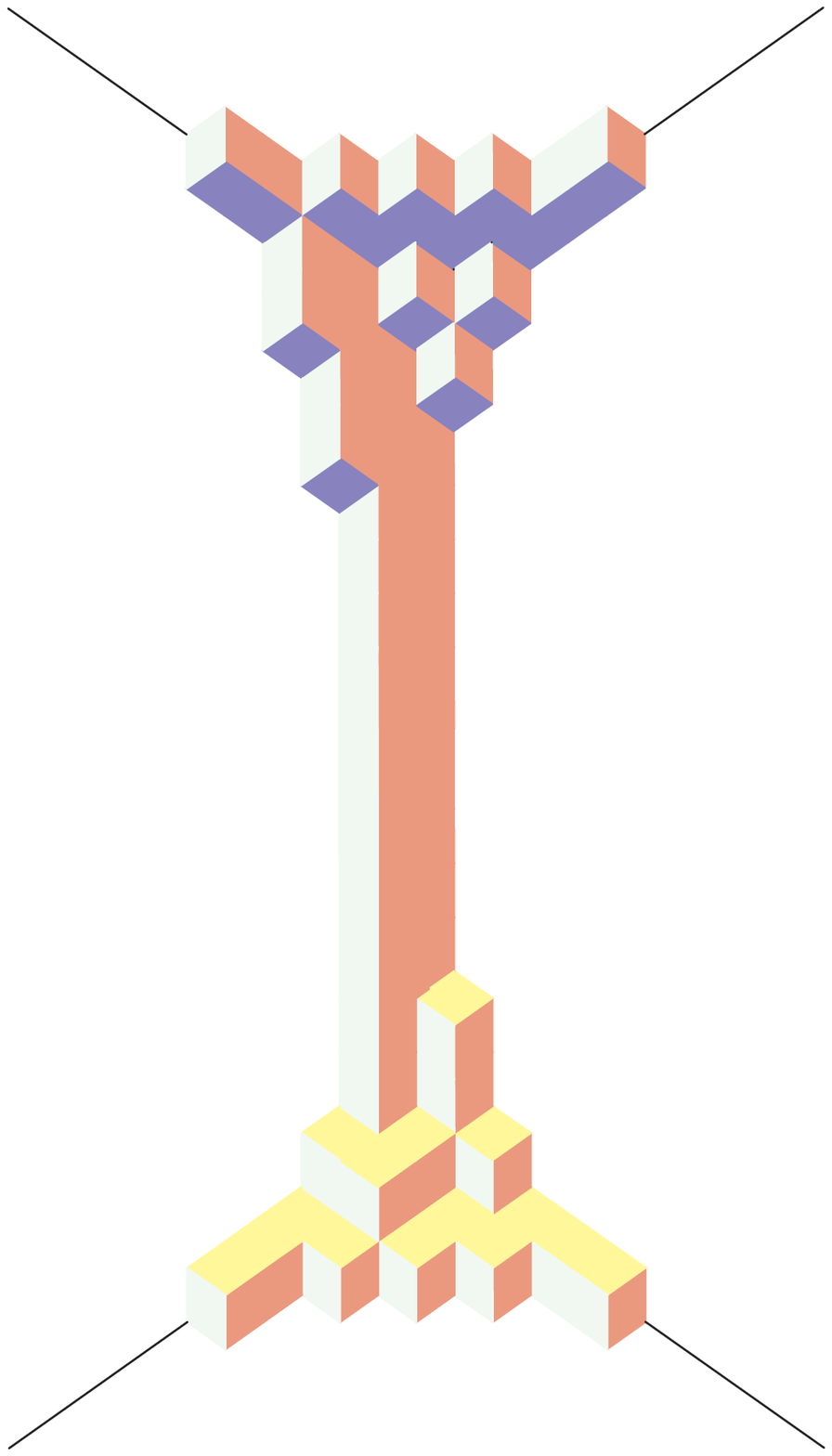}}}
 \caption{An $\bA$-fixed ideal sheaf on $\cO(-1)\oplus\cO(-1)\to \pP^1$}
  \label{f_Om1Om1}
\end{figure}

\subsection{Virtual tangent spaces at fixed points} 

\subsubsection{}

For a $\bT$-fixed ideal sheaf $\cI$ on a 3-fold $X$ we denote 
$$
\cN(\cI) = \chi(\cO_X) -
  \chi(\cI,\cI) \,. 
$$
For $\cI\in \Hilb(X)^\bT$ this is the virtual tangent (and virtual normal) 
space at $\cI$.  
For a $1$-dimensional monomial ideal 
$$
I \subset \C[x_1,x_2,x_3] = \cO_{\C^3}
$$
the character of $\cN(I)$ is well-defined as an element of 
$$
\cN(I) \in \Q(\bT) \supset K_\bT(\pt) \,. 
$$
In fact, the only poles of $\tr_{\cN(I)}$ are first-order poles 
along the weights $t_i$ of the directions of infinite legs, see
\cite{mnop}. For any $\cI$ we have 
$$
\cN(\cI) = \sum_{x \in X^\bT}  \cN(I_x) \,, 
$$
where $I_x$ is the restriction of $\cI$ to the toric chart at $x$, see \cite{mnop}. 

\subsubsection{}

The 
residue of $\tr_{\cN(I)}$ at $t_1=1$ only depends on the saturation 
$$
I^\textup{leg}_1 \supset I 
$$
of $I$ with respect to $x_1$. Combinatorially, $I^\textup{leg}_1$
corresponds to a pure infinite leg in the direction of $x_1$ with the 
same cross-section $\lambda$ as $I$. We have 
$$
\cN(I^\textup{leg}_1) = \C[x_1] \otimes 
T_{I_\lambda} \Hilb(\C^2)\,,
$$
where $I_\lambda \subset \C[x_2,x_3]$ is the corresponding monomial
ideal. 

 If $e$ is an edge of $\Delta(X)$ that joins two vertices $x$
and $x'$, then the corresponding saturations $I^e_{x}$ and $I^e_{x'}$ 
glue to form an ideal sheaf $\cI^e$. Its deformations and obstructions 
are easy to understand using the formula 
$$
\cN(\cI^e) = \cN(I^e_{x}) + \cN(I^e_{x'}) 
$$
and the explicit description of $T_{I_\lambda} \Hilb(\C^2)$ in terms
of arms and legs of the squares of $\lambda$. 

\subsubsection{}

We set 
$$
\cNr(I)=\cN(I) - \sum_{i=1}^3 \cN(I^\textup{leg}_i) \in K_\bT(\pt) \,. 
$$
It satisfies
$$
\cNr(I) = - \C(\bk) \otimes \cNr(I)^\vee \,. 
$$
By construction, 
$$
\cN(\cI) = \sum_{\textup{edges $e$}} \cN(\cI^e) + 
\sum_{\textup{vertices $x$}}  \cNr(I_x) \,.
$$
Since the first term here is explicit, we focus on the vertex
contribution. 

\subsubsection{The K-theoretic vertex}\label{s_K_vertex}

{}From Proposition \ref{p_rho} we have the following formula
for the localization vertex: 
\begin{equation}
\bV(\lambda,\mu,\nu) = \sum_{\pi} (-q)^{|\pi|} \, 
\rho\left(\cNr(I)_+\right)\,, 
\label{full_vert}
\end{equation}
where the sum is over all $3$-legged partitions $\pi$ ending 
on given triple $(\lambda,\mu,\nu)$ of $2$-dimensional partitions, 
$I\in \cO_{\C^3}$ is the corresponding ideal, and the size of 
an infinite partition $\pi$ is defined as a rank
$$
|\pi| = \rk \left(-2\cO-I+\sum I^\textup{leg}_i\right) 
$$
of a finite-dimensional virtual $\bT$-module. Note this size may be 
negative. The decomposition
$$
\cNr(I) = \cNr(I)_+ - \C(\bk) \otimes \cNr(I)^\vee_{+} 
$$
is with respect to sign of the weights on $d\sigma\in \Lie \bA$, 
where $\sigma: \Ct \to \bA$ is generic $1$-parameter subgroup. 
The product $\rho\left(\cNr(I)_+\right)$ is independent of the choice 
of $\sigma$.

\subsubsection{The index vertex}

We now compute the limit of \eqref{full_vert} on $t\sigma(z)\in \bT$ 
as $z\to 0$. This does depend on the choice of $\sigma$ which 
we call the choice of a \emph{slope} since $\bA$ is
2-dimensional. Because we want the slope
to be generic, we  choose is so that 
\begin{equation}
\Q_{>0} \cdot d\sigma  = 
\Q_{>0} \cdot \left(d\sigma_0 + \textup{small perturbation}\right)\label{ray_sigma}
\end{equation}
for some fixed rational slope $d\sigma_0$ and an infinitesimal 
perturbation of a given sign. Given a generic slope like this, we 
define 
\begin{align}
  \label{index_vert}
  \bV^\sigma(\lambda,\mu,\nu) &= \lim_{z\to 0} \, 
\bV(\lambda,\mu,\nu)\big|_{t \sigma(z)} \\
&=
\sum_{\pi} (-q)^{|\pi|} \, 
(-\bk^{1/2})^{\sind_\sigma I} \,, \notag 
\end{align}
where
\begin{equation}
\sind_\sigma I = \rk \cNr(I)_+ \,. \label{slope_index}
\end{equation}
This jumps as the sign of the small perturbation in \eqref{ray_sigma} 
changes, that is, as the ray $\Q_{>0} \cdot d\sigma$ crosses from one side 
of $\Q_{>0} \cdot d\sigma_0$ to the other.  This wall-crossing is analyzed
in \cite{AO}.

\subsection{The refined vertex}

\subsubsection{} 

We call a slope $\sigma$ \emph{preferred} if $\sigma_0$ fixes one of
the coordinate axes. By convention, we choose this to be the $x_3$-axis 
which we plot vertically. In this section, we show that for
preferred
slopes the index vertex specializes to the refined vertex of \cite{IKV}. 

For a general slope, the index \eqref{slope_index} is a quite
complicated function of a 3-dimensional partition because the
character of $\cNr(I)$ 
depends quadratically on the character of $I$ itself. For preferred
slopes, however, there is a big cancellation in $\rk \cNr(I)_+$ 
and the dependence
becomes linear, that is, the index may be computed a certain 
single sum over the boxes $\bx\in\pi$. 

\subsubsection{}

Let $\pi$ be a 3-dimensional partition, let $\pi^\textup{leg}_3$ its 
leg in the preferred direction, and let $\lambda$ the corresponding
$2$-dimensional partition. We view the diagram as a collection of 
squares $\lambda \subset \R_{\ge 0}^2$  in the plane 
and denote by 
$$
f_\lambda = \textup{boundary} \left(\R_{\ge 0}^2 \setminus \lambda
\right)
$$
the \emph{profile} of $\lambda$. This is a zig-zag line going from 
$x_1$-axis to the $x_2$-axis. We label its two possible slopes by 
the corresponding variable $x_i$ and call its corners peaks and
valleys, so that that 
\begin{equation*}
  f_\varnothing = \{x_1\ge 0, x_2=0\} \cup \{x_1= 0, x_2\ge 0\} 
\end{equation*}
has one valley and no peaks.  See \eqref{profile} below for a more
formal definition. 

\subsubsection{}

There is a natural projection 
$$
p: \R^3 \to f_\lambda
$$
along the $(1,1,0)$ and $(0,0,1)$ directions. For a box $\bx\in\pi$ 
we define 
$$
\xi_\lambda(\bx) = 
\begin{cases}
x_1\,, & p(\bx) \in \textup{$x_1$-slope of $f_\lambda$}\,,\\
x_2\,, & p(\bx) \in \textup{$x_2$-slope of $f_\lambda$}\,, \\
x_3^{\pm 1}\,, & p(\bx) \in \textup{peak/valley of $f_\lambda$} \,. 
\end{cases}
$$
This is illustrated in Figure \ref{f_chim}. We define 
\begin{equation}
\Xi(\pi) = \sum_{\bx \in \pi\setminus \pi^\textup{leg}_3}
\xi_\lambda(\bx) - 
\sum_{\bx \in \pi^\textup{leg}_1} 
\xi_\varnothing(\bx)
- 
\sum_{\bx \in \pi^\textup{leg}_2} 
\xi_\varnothing(\bx)\,, \label{Xip}
\end{equation}
which is a finite sum. 

For a preferred slope, the index is computed in the following 

\begin{Theorem}\label{t_pref} 
 If $x_3$ is fixed by $\sigma_0$ then 
$$
\sind_\sigma I = \rk \left(\Xi - \Xi^\vee\right)_{+} \,. 
$$
\end{Theorem}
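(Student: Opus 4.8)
The plan is to compute $\sind_\sigma I=\rk\cNr(I)_+$ directly from the MNOP expression for the vertex character and to exhibit the collapse of the answer to a single sum over boxes. Throughout we use that the preferred slope $\sigma_0$ fixes $x_3$, so it weighs a monomial $t_1^{a}t_2^{b}t_3^{c}$ by $a-b$; in particular $\bk=(t_1t_2t_3)^{-1}$ is $\sigma_0$-neutral, and in fact has weight exactly $0$ under any perturbation as in \eqref{ray_sigma}. Writing $P_\pi=\sum_{\bx\in\pi}t^{\bx}$ for the box generating function and $\overline{(\cdot)}$ for $t_i\mapsto t_i^{-1}$, recall from \cite{mnop} the identity
$$\cN(I)=P_\pi-\bk\,\overline{P_\pi}+\bk\,P_\pi\overline{P_\pi}\,(1-t_1)(1-t_2)(1-t_3),$$
which, suitably regularized in the presence of legs and corrected by $-\sum_{i=1}^3\cN(I^\textup{leg}_i)$, yields the honest Laurent polynomial $\cNr(I)$ satisfying $\cNr(I)=-\bk\,\cNr(I)^\vee$.

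First I would reduce the rank of the positive part. Split $\cNr(I)=\cNr(I)^{>0}+\cNr(I)^{=0}+\cNr(I)^{<0}$ by $\sigma_0$-weight. Since $\bk$ is $\sigma_0$-neutral, the self-duality restricts to each summand, giving $\cNr(I)^{<0}=-\bk\,(\cNr(I)^{>0})^\vee$ and $\cNr(I)^{=0}=-\bk\,(\cNr(I)^{=0})^\vee$; hence $\rk\cNr(I)^{=0}=0$, and the infinitesimal perturbation in \eqref{ray_sigma}, which on the $\sigma_0$-neutral line $t_1^{a}t_2^{a}t_3^{c}=\bk^{-a}t_3^{\,c-a}$ sees only the sign of $c-a$, writes $\cNr(I)^{=0}=A-\bk\,A^\vee$ for a well-defined $A$. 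Therefore
$$\sind_\sigma I=\rk\cNr(I)_+=\rk\cNr(I)^{>0}+\rk A.$$

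The heart of the matter is the collapse of the quadratic term, which is where the preferred slope is used. Slicing $\pi$ by the anti-diagonal planes $a-b=\textup{const}$ — the natural slicing for a slope fixing $x_3$ — the generating function $P_\pi$ becomes on each slice a stack of interlacing $2$-dimensional partitions in the $x_3$-direction, with asymptotics governed by the cross-section $\lambda$ of $\pi^\textup{leg}_3$ and its profile $f_\lambda$. On such a slice the factor $(1-t_3)$ inside $\bk\,P_\pi\overline{P_\pi}(1-t_1)(1-t_2)(1-t_3)$ produces a sum that telescopes in the $x_3$-direction; passing to the rank of the part of fixed $\sigma_0$-sign annihilates all interior terms and leaves only the boundary contributions, which are determined by whether $f_\lambda$ meets that anti-diagonal along an $x_1$-segment, an $x_2$-segment, a peak, or a valley — that is, by the rule defining $\xi_\lambda$. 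The subtraction $-\sum_i\cN(I^\textup{leg}_i)$ is exactly the MNOP regularization and produces the pure-leg terms $\xi_\varnothing$ in \eqref{Xip} along legs $1$ and $2$. Assembling the slices, one obtains
$$\rk\cNr(I)^{>0}+\rk A=\rk\bigl(\Xi-\bk\,\Xi^\vee\bigr)_+ ,$$
and since $\bk$ has perturbed weight $0$, additivity of $\rk(\cdot)_+$ gives $\rk(\Xi-\bk\,\Xi^\vee)_+=\rk(\Xi-\Xi^\vee)_+$, which is the assertion. (Reversing the sign of the perturbation in \eqref{ray_sigma} exchanges $A$ with $-\bk\,A^\vee$ and, on the other side, $\Xi$-peaks with $\Xi$-valleys, so the two chambers match.)

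The main obstacle is this third step: establishing the telescoping of the quadratic vertex character along $x_3$-slices and, above all, identifying its surviving boundary terms — with the correct signs — with the $x_1/x_2$-segment and peak/valley labels entering $\xi_\lambda$. This is the genuine combinatorial content of the theorem; the tool is the interlacing structure of the slices, equivalently the arm–leg description of $T_{I_\lambda}\Hilb(\C^2)$ recalled above, and one must track the non-polynomial leg pieces so that they cancel pairwise and leave precisely the finite expression $\Xi$.
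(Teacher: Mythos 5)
There is a genuine gap, and you have named it yourself: the ``third step'' of your plan is not an argument but a restatement of the theorem. Everything before it (splitting $\cNr(I)$ by $\sigma_0$-weight, using $\cNr(I)=-\C(\bk)\otimes\cNr(I)^\vee$ to kill the weight-zero rank, and noting that $\bk$ is $\sigma$-neutral so that $\rk(\Xi-\bk\,\Xi^\vee)_+=\rk(\Xi-\Xi^\vee)_+$) is routine bookkeeping; the entire content of Theorem \ref{t_pref} is precisely the assertion that the $\sigma$-index of the quadratic piece $\bk\,P_\pi\overline{P_\pi}(1-t_1)(1-t_2)(1-t_3)$, after the leg regularization, collapses to the boundary terms encoded by $\xi_\lambda$. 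You assert that on each anti-diagonal slice the factor $(1-t_3)$ ``telescopes'' and that taking the index ``annihilates all interior terms,'' but you give no mechanism for this cancellation, and it is not a formal telescoping: the weights contributing to the quadratic term depend quadratically on the box configuration, and there is no slice-by-slice identity that makes the interior terms cancel in rank without further input. In the paper this cancellation is exactly Lemma \ref{l_bal} (preceded by the truncation lemma that disposes of legs $1$ and $2$, which you likewise assert rather than prove), and its proof requires genuinely non-combinatorial input: one writes $\cN(Z)=(1-x_3^{-1})\,\cN_2(Z)+x_3^{-N}\chi(\cO_Z)-x_3^{N}\overline{\chi(\cO_Z)}$ by interpreting $\pi_{12*}(\cI_Z/x_3^N)$ as a framed rank-$N$ torsion-free sheaf on $\C^2$, then kills $\sind_\sigma\cN_2(Z)$ using the smoothness of the instanton moduli, the holomorphic symplectic form (which is scaled by the weight of $x_3^{-1}$ and hence pairs attracting with repelling directions), and the monomial-ideal support Lemma \ref{l_incl} to handle the exceptional weights $0$ and $x_3^{-1}$. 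Your proposal contains no substitute for this step, so as it stands it proves nothing beyond the trivial reductions.

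A secondary point: you conflate two different slicings that play separate roles in the paper. The cancellation of the quadratic surplus is proved by slicing in the $x_3$-direction, where the slices are a \emph{nested} chain of monomial ideals $\cI_{\lambda^{(1)}}\supset\dots\supset\cI_{\lambda^{(N)}}$ (the rank-$N$ sheaf above), not interlacing partitions; the anti-diagonal level sets $c(\bx)=a_1-a_2$ and the profile $f_\lambda$ enter only afterwards, in the purely mechanical computation identifying the index of the linear expression \eqref{XiZ} with the formula \eqref{Xip}. If you want a proof along your lines, you must either supply an honest combinatorial proof of your telescoping claim (tracking signs at peaks, valleys, and slope segments, and verifying the pairwise cancellation of the non-polynomial leg pieces), or import the balance lemma, at which point you are reproducing the paper's argument.
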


\noindent 
Of course, one should always bear in mind that the $\bT$-weight of the monomials 
$x_i$ are the \emph{opposite} of the weights of coordinate
directions. 

\subsubsection{}
The proof of Theorem \ref{t_pref} will take several steps. The first 
step is to reduce to the case 
\begin{equation}
\pi^\textup{leg}_1=\pi^\textup{leg}_2=\varnothing  \,. 
\label{trunc12}
\end{equation}
Introduce the following truncation 
$$
I_N = I + (x_1^N,x_2^N) \,,
$$
and the corresponding truncations $I^\textup{leg}_{i,N}$ of 
the saturations $I^\textup{leg}_i$.  The general case of
Theorem \ref{t_pref} is reduced to \eqref{trunc12} by the 
following

\begin{Lemma} We have 
 $$
\sind_\sigma =\rk\left( \cNr(I_N) - \cNr(I^\textup{leg}_{1,N}) - 
 \cNr(I^\textup{leg}_{2,N})\right)_{+} 
$$
for all $N\gg 0$. 
\end{Lemma}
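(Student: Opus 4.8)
The goal is to show that passing from the full ideal $I$ (with its three infinite legs) to the doubly truncated ideal $I_N = I + (x_1^N, x_2^N)$, and correspondingly truncating the saturations $I^\textup{leg}_i$, does not change the rank of the positive part of the relevant virtual module. Here the slope $\sigma$ is generic near a preferred $\sigma_0$ fixing the $x_3$-axis. The plan is to work entirely with the class $\cNr(I) = \cN(I) - \sum_i \cN(I^\textup{leg}_i)$, which lies in $K_\bT(\pt)$ and satisfies the self-duality $\cNr(I) = -\C(\bk)\otimes\cNr(I)^\vee$, so that $\sind_\sigma I = \rk\cNr(I)_+$ is well-defined. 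First I would write $\cNr(I) - \cNr(I_N)$ as an honest $\bT$-character and argue that, for $N$ large, every monomial appearing in it is supported away from the $x_3$-axis — i.e.\ it involves a positive power of $x_1$ or of $x_2$ — because the discrepancy between $I$ and $I_N$ and between the $x_1,x_2$-legs and their truncations all live in the region $\{x_1\ge N\}\cup\{x_2\ge N\}$, while the $x_3$-leg cross-section $\lambda$ is unaffected.

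**Invariance of the index under the correction term.** The key point is that this correction class $\cNr(I) - \cNr(I_N) - \bigl(\text{leg corrections}\bigr)$ — call it $R_N$ — is itself self-dual with respect to $-\C(\bk)\otimes(\,\cdot\,)^\vee$, since both $\cNr(I)$ and $\cNr(I_N^{(\cdot)})$ are, being combinations of $\cN$'s of ideals with the same asymptotic legs minus their saturations. Consequently $\rk R_{N,+}$ is computed by pairing the $\bT$-weights of $R_N$ against $d\sigma$. Because every weight of $R_N$ has a strictly positive $x_1$- or $x_2$-exponent, and $\sigma$ is an infinitesimal perturbation of $\sigma_0$ which pairs to a \emph{definite nonzero sign} on any such weight (the $x_3$-direction is the only one $\sigma_0$ kills, so monomials off the $x_3$-axis are never on the wall for $\sigma_0$, hence the small perturbation cannot flip their sign), the splitting of $R_N$ into $\pm$ parts is the same for $\sigma$ as for $\sigma_0$. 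Then I would invoke the self-duality of $R_N$ to see that $\rk R_{N,+}$ depends only on $\rk R_N$ and a sign count that is manifestly $N$-independent once $N$ is large enough that the truncation has stabilized the finite part — in fact one shows $R_N$ contributes $0$ to the index because its positive and negative halves have equal rank by the $-\C(\bk)\otimes(\,\cdot\,)^\vee$ symmetry combined with the fact that $\bk$-twisting does not move weights off the $x_3$-axis back onto it.

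**Assembling the statement.** Putting this together: $\sind_\sigma I = \rk\cNr(I)_+ = \rk\cNr(I_N)_+ + \rk R_{N,+}$, and by the previous paragraph $\cNr(I_N)_+$ should be replaced by $\bigl(\cNr(I_N) - \cNr(I^\textup{leg}_{1,N}) - \cNr(I^\textup{leg}_{2,N})\bigr)_+$ once one notes the two truncated $x_1,x_2$-legs have trivial $\cNr$ up to the same kind of off-axis, self-dual, index-neutral correction (indeed $I^\textup{leg}_{i,N}$ is a finite object and its $\cNr$ is built from $T_{I_{\lambda_i}}\Hilb(\C^2)$-type pieces that cancel in pairs under the involution). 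The net effect is exactly the claimed formula, with the leftover $R_{N,+}$-type terms contributing rank zero to the signed count. I would finish by remarking that for $N\gg 0$ the diagrams $I_N$, $I^\textup{leg}_{1,N}$, $I^\textup{leg}_{2,N}$ all have their legs in directions $1$ and $2$ truncated to the empty partition as far as the $\cNr$-correction is concerned, which is precisely the hypothesis \eqref{trunc12} needed for the next step of the proof of Theorem \ref{t_pref}.

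**Main obstacle.** The delicate part is not the formal manipulation but verifying rigorously that the correction class $R_N$ genuinely has \emph{all} its weights off the $x_3$-axis and that none of them are fixed by $\sigma_0$ — one must rule out stray contributions coming from the quadratic dependence of $\cN(I)$ on the character of $I$, where a box far out along the $x_1$-leg could pair with a box far out along the $x_2$-leg to produce a monomial with only an $x_3$-dependence surviving. Controlling these cross-terms, and checking that the $\chi(\cF,\cF)$-type quadratic piece does not secretly reintroduce $x_3$-axis weights after the $\C(\bk)$-twist, is where the real work lies; I expect this to require the explicit combinatorial description of $\cN(I)$ in terms of arms and legs recalled in the preceding subsections, tracking which boxes contribute to each $t_i=1$ residue.
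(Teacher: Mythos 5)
Your overall strategy matches the paper's: form the correction class $R_N = \cNr(I) - \bigl(\cNr(I_N) - \cNr(I^\textup{leg}_{1,N}) - \cNr(I^\textup{leg}_{2,N})\bigr)$, show its weights sit off the $x_3$-axis at $\sigma_0$-definite signs so the generic perturbation $\sigma$ is irrelevant, and conclude it contributes nothing to the index. But the last step, as you have written it, has a genuine gap. You invoke the self-duality $R_N = -\C(\bk)\otimes R_N^\vee$ to conclude that the positive and negative halves ``have equal rank'' and hence $\rk R_{N,+}=0$. Self-duality yields $\rk R_{N,+} = -\rk R_{N,-}$, which is automatic from $\rk R_N = 0$ and leaves $\rk R_{N,+}$ completely undetermined: the class $x_1^N - \bk\,x_1^{-N}$ is self-dual, off-axis, and sign-definite, yet has $\rk(\cdot)_+ = 1$. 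Self-duality plus off-axis support is simply not enough.

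The input you are missing --- and what the paper actually uses --- is that $R_N$ can be written explicitly as a sum of Euler characteristics $\chi(A,B)$ in which the two arguments are supported along \emph{different} coordinate axes, after substituting the identity $\cO_{Z_N} = \cO_Z - x_1^N\cO_{Z^\textup{leg}_1} - x_2^N\cO_{Z^\textup{leg}_2}$ into the bilinear form $\cN(\cdot)$. Each such $\chi$ is (i) a finite Laurent polynomial, because the Tor sheaves live on the bounded intersection of the two supports, (ii) of virtual rank zero, since $\dim\supp A + \dim\supp B < 3$ kills the top-degree term in Grothendieck--Riemann--Roch, and (iii) pushed entirely to one side of the wall by the $x_1^{\pm N}$, $x_2^{\pm N}$, $x_1^{\mp N}x_2^{\pm N}$ shifts. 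Together (ii) and (iii) give zero index contribution term by term; (i) alone, or self-duality alone, does not. Your ``main obstacle'' paragraph correctly identifies the danger in the quadratic dependence of $\cN$ on $[\cO_Z]$, but you propose to resolve it via arm--leg combinatorics, whereas the paper linearizes that dependence at once with the displayed identity and a six-term bilinear expansion; the rank-zero observation on each term is then what closes the argument.
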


\begin{proof}
 Denote by $Z$ the spectrum of $\cO/I$ so that 
$$
\cO_{Z_N} = \cO_Z-x_1^N \cO_{Z^\textup{leg}_1} - 
x_2^N \cO_{Z^\textup{leg}_2} \,. 
$$
We compute 
\begin{multline}
 \cNr(I) - \cNr(I_N) + \cNr(I^\textup{leg}_{1,N}) +  
 \cNr(I^\textup{leg}_{2,N})  = \\
-\chi\left(\cO_{Z_N\setminus Z^\textup{leg}_{1,N}},
x_1^N \cO_{Z^\textup{leg}_{1}}\right) - 
\chi\left(\cO_{Z_N\setminus Z^\textup{leg}_{2,N}},
x_2^N \cO_{Z^\textup{leg}_{2}}\right) \label{expandN} \\ - 
\chi\left(x_1^N \cO_{Z^\textup{leg}_{1}}, 
x_2^N \cO_{Z^\textup{leg}_{2}}\right) - \dots 
\end{multline}
where dots stand for 3 more terms obtained by reversing 
the order of entries in Euler characteristic. Since the supports 
of all pairs in \eqref{expandN} extend along different coordinate
axes, each Euler characteristic is a zero-dimensional 
element of $K_\bT(\pt)$ shifted by a large nontrivial weight 
of $\sigma_0$. Therefore, it makes no contribution to the 
index. 
\end{proof}

\subsubsection{The balance lemma}\label{s_bal} 

One can think about the situation \eqref{trunc12} a bit more 
abstractly. Let $C$ be a 1-dimensional 
component of $X^{\sigma_0}$ for some general 3-fold $X$, the case at 
hand being the $x_3$-axis in $X=\C^3$. Let $Z$ be a 
$\bA$-invariant subscheme 
contained in an infinitesimal neighborhood of $C$ and let $E \subset Z$
be the closure of the generic point of $Z$ along $C$, for example 
$$
E = Z^\textup{leg}_3
$$
in our concrete situation. We define 
\begin{equation}
\Xi_Z = \chi\!\left(\cI_E,\cO_{Z\setminus E}
\otimes \left(\pi^* TC\right)^{\otimes{N}}\right)\,,\label{XiZ}
\end{equation}
where $TC$ is the tangent bundle of $C$, $\pi: Z \to C$ is 
the $\bT$-equivariant projection, and
$$
0 \ll N \ll \| \textup{small perturbation} \|^{-1} 
$$
where the small perturbation refers to \eqref{ray_sigma}.

The following technical result compares the deformations 
of $Z$ with the deformations of $E$, showing that 
the index of the difference is linear in $[\cO_{Z}]$. 

\begin{Lemma}\label{l_bal} We have 
  \begin{equation}
    \label{bal}
\rk \left(\cN(Z) - \cN(E)\right)_+ = \rk \left(\Xi_Z - \Xi_Z^\vee\right)_+ \,. 
\end{equation}
\end{Lemma}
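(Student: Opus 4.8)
The plan is to compute both sides explicitly in terms of the $\bA$-characters of the relevant sheaves and match them term by term, using the geometry of the projection $\pi: Z \to C$ to organize the bookkeeping. First I would expand the left-hand side: since $\cN(Z) = \chi(\cO_X) - \chi(\cI_Z,\cI_Z)$ and similarly for $E$, the difference $\cN(Z) - \cN(E)$ telescopes, using the short exact sequence $0 \to \cI_E/\cI_Z \to \cO_Z \to \cO_E \to 0$ (note $E\subset Z$ so $\cI_Z \subset \cI_E$), into a combination of Euler characteristics $\chi(\cO_E, \cO_{Z\setminus E})$, $\chi(\cO_{Z\setminus E},\cO_E)$, and $\chi(\cO_{Z\setminus E},\cO_{Z\setminus E})$, where I write $\cO_{Z\setminus E}$ for the class $[\cO_Z]-[\cO_E]$ (a finite-dimensional virtual $\bT$-module, since $Z\setminus E$ is zero-dimensional in the relevant sense). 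The key structural fact is that $Z$ sits in an infinitesimal neighborhood of $C$, so $\pi_* \cO_{Z\setminus E}$ is a finite-length sheaf on $C$ and everything is controlled by $C$; the tangent weight along $C$ is, up to the $\sigma_0$-fixed directions, the only variable that matters for the index with respect to the perturbed slope $\sigma$.

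The heart of the argument is the comparison of $\rk(\,\cdot\,)_+$ on the two sides. Here I would invoke the \emph{self-duality} built into the setup: by \eqref{symm_bk} applied fiberwise along $C$ (equivalently, by Serre duality on $X$ restricted to the neighborhood of $C$, using $\cL_1\otimes\cL_2=\cK_X$), the virtual module $\cN(Z)-\cN(E)$ satisfies a duality of the shape $V = -\C(\bk)\otimes V^\vee$ plus ``edge'' corrections supported along $C$ that are themselves not paired by the involution. Taking $\rk(\,\cdot\,)_+$ kills the $\C(\bk)$-paired part (each dual pair contributes one positive and one negative weight, hence zero to $\rk(\,\cdot\,)_+ - \tfrac12\rk$; and since $\rk V = 0$ for the paired part we get $\rk V_+ = \tfrac12\rk V = 0$ there), leaving exactly the contribution of the unpaired part. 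The claim is that this unpaired part is precisely $\Xi_Z - \Xi_Z^\vee$: the twist by $(\pi^* TC)^{\otimes N}$ in \eqref{XiZ} with $0 \ll N \ll \|\text{perturbation}\|^{-1}$ is engineered so that the sign of the $\sigma$-weight of each surviving monomial is governed by the sign of the exponent of the $TC$-weight, which is exactly the combinatorial data recorded by the profile function $\xi_\lambda$ in the concrete $\C^3$ case — i.e.\ \eqref{XiZ} is the coordinate-free repackaging of \eqref{Xip}.

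The main obstacle I anticipate is controlling the cross term $\chi(\cO_{Z\setminus E},\cO_{Z\setminus E})$ and showing that, after passing to $\rk(\,\cdot\,)_+$, it contributes nothing beyond what is already in $\Xi_Z$; this is where the hypothesis that $\sigma_0$ fixes $C$ (so that $C$-directions are ``slow'' and the perturbation dominates) is essential, and where one must be careful that the large twist $N$ is taken \emph{after} fixing the partition but small enough relative to the perturbation that no spurious weight changes sign. A secondary technical point is the well-definedness of $\Xi_Z$ as a finite-dimensional class, which follows because $\cI_E/\cI_Z \cong \cO_{Z\setminus E}$ is supported on a zero-dimensional scheme once we quotient out the $C$-direction — but one should check the twist by $(\pi^*TC)^{\otimes N}$ does not disturb this. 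Modulo these points, the identity \eqref{bal} follows by comparing the two finite weight-sums directly.
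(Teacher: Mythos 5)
Your central step fails. The virtual self-duality $V = -\C(\bk)\otimes V^\vee$ does \emph{not} imply $\rk V_+ = 0$ for the ``paired part.'' Consider a single deformation/obstruction pair: $\Def$ contains one line of weight $w$ with $\sigma(w) > 0$, and $\Obs = \Def^\vee \otimes \C(\bk)$ contains the dual line of weight $\bk w^{-1}$, so $V = w - \bk w^{-1}$ is exactly $\C(\bk)$-self-dual; but restricting to $\bA = \Ker\bk$ the two weights become $w$ (positive) and $w^{-1}$ (negative), so $V_+ = w$ and $\rk V_+ = 1 \neq 0$. What self-duality forces is $\rk V = 0$, not $\rk V_+ = 0$. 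Indeed $\sind_\sigma = \rk\cN_+$ is precisely the nontrivial statistic that the index vertex in \eqref{index_vert}--\eqref{slope_index} records; if your argument were correct, the entire construction would be vacuous. A related symptom of the confusion: the self-duality of $\cN(Z) - \cN(E)$ is an \emph{exact} identity (each of $\cN(Z)$ and $\cN(E)$ is self-dual on its own), so there are no ``edge corrections'' to duality; the quantity $\Xi_Z - \Xi_Z^\vee$ on the right of \eqref{bal} is not a defect of duality, it is the genuine index.

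A pairing argument does work, but only on an \emph{honest} (non-virtual) $\bA$-module: if $W$ is a genuine representation with a nondegenerate $\bA$-equivariant pairing whose weight has small $\sigma$-pairing, then the form matches each positive weight of $W$ with a negative weight of equal multiplicity, and one gets cancellation in the index away from the finitely many exceptional weights. The paper's proof produces exactly such a $W$. After reducing to the case $E=\varnothing$, it views $\cI_Z/x_3^N$, projected to the $(x_1,x_2)$-plane, as a framed rank-$N$ torsion-free sheaf $\cF$ on $\C^2$, whose honest tangent space $\cN_2(Z)$ (formula \eqref{Xi2F}) carries the holomorphic symplectic form induced from $dx_1\wedge dx_2$, scaled by $\bA$ with the weight of $x_1x_2 = x_3^{-1}$. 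This gives the exact identity $\cN(Z) = (1-x_3^{-1})\cN_2(Z) + x_3^{-N}\chi(\cO_Z) - x_3^{N}\overline{\chi(\cO_Z)}$; the symplectic pairing on $\cN_2(Z)$, combined with the combinatorial Lemma~\ref{l_incl} showing $\cN_2(Z)^\bA = 0$, forces the first term to contribute nothing to $\sind_\sigma$, and the remainder is precisely $\Xi_Z - \Xi_Z^\vee$ in this case; the general $E$ follows by a short expansion in $[\cO_Z]$. This reduction from the virtual $\cN(Z)$ to the honest symplectic $\cN_2(Z)$ is the ingredient your proposal is missing, and appealing directly to the virtual $\C(\bk)$-duality cannot replace it.
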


\noindent 
The proof of this lemma is given in the Appendix. It is clear 
that the Lemma proves the theorem modulo checking that 
the definition \eqref{XiZ} specializes to the formula \eqref{Xip}.

\subsubsection{Conclusion of the proof}\label{s_chim} 

We are now back in the case $X=\C^3$, 
$C$ is the $x_3$-axis, and $I_E=I^\textup{leg}_3$ is generated by 
monomials in $x_1$ and $x_2$. Concretely, 
$$
I_E = \left(\left\{x_1^i x_2^j\right\}_{(i,j)\notin \lambda}\right)
$$
where $\lambda\in \Z_{\ge 0} \times \Z_{\ge 0}$ is the 
diagram of a partition. For example, the case 
$$
I_E = (x_1^2,x_2^3) 
$$
is depicted in Figure \ref{f_chim}. Visually, one may 
compare $E$ to a chimney in the corner of a room 
and then $Z$ corresponds to a few boxes stacked against
this chimney (in violation of all building regulations), 
as in Figure \ref{f_chim}.

\psset{unit=1 cm}
\begin{figure}[!htbp]
  \centering
   \begin{pspicture}(0,0)(7.5,6)
\rput[lb](0,0){\scalebox{0.4}{\includegraphics{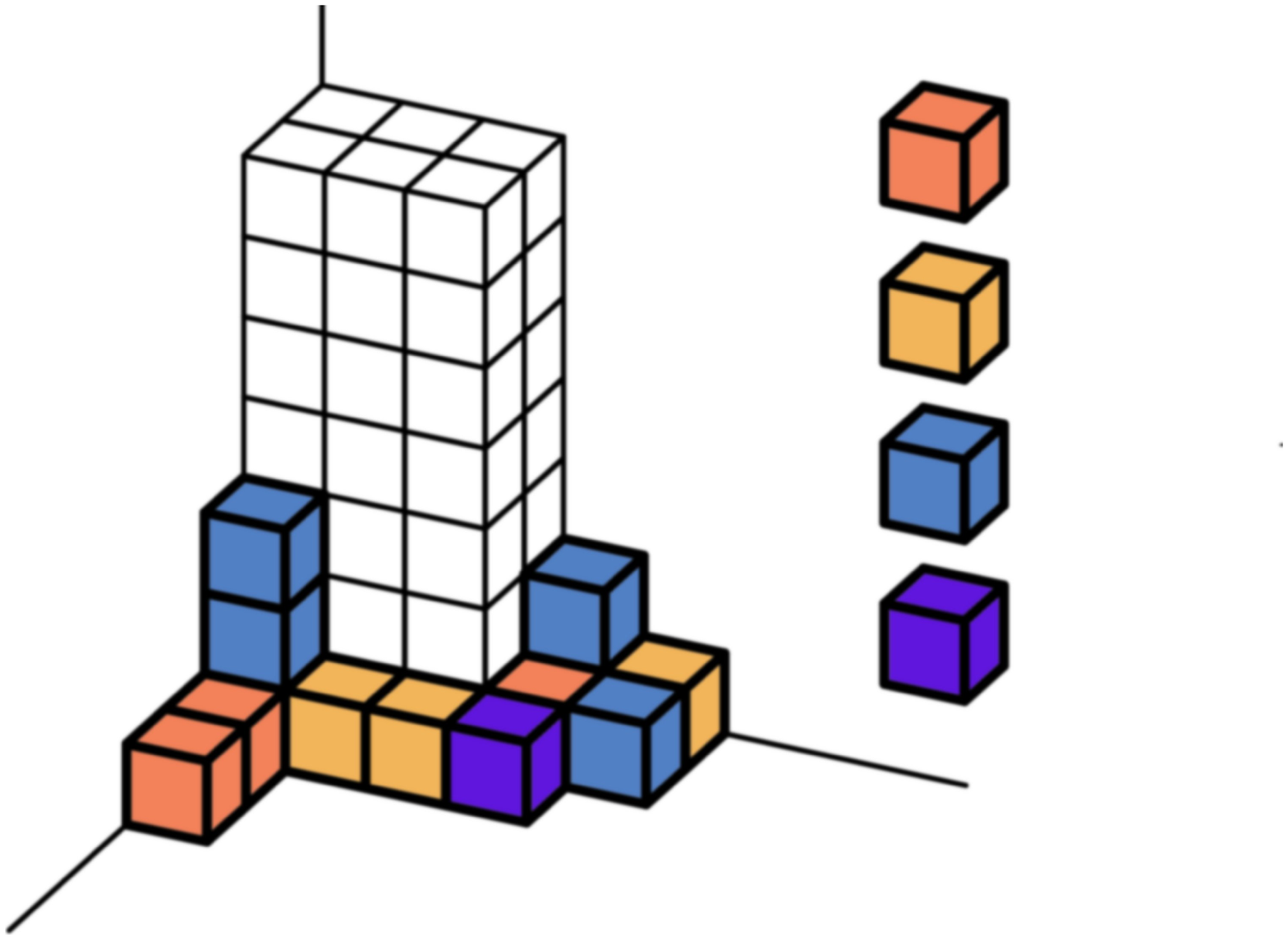}}}
\rput[c](0,0.4){$\displaystyle x_1$}
\rput[c](2.3,5.7){$x_3$}
\rput[c](6,0.7){$x_2$}
\rput[l](6.6,4.95){$=x_1$}
\rput[l](6.6,3.95){$=x_2\sim x_1^{-1}$}
\rput[l](6.6,2.95){$=x_3^{-1}$}
\rput[l](6.6,1.95){$=x_3$}
\end{pspicture}
 \caption{The function $\xi_\lambda(\bx)$}
  \label{f_chim}
\end{figure}

The generators and relations for 
the ideal $I_E$ correspond to the inner and outer
corners of the chimney, that is, to the valleys and the 
peaks of the profile of $\lambda$. In our example, we have generators
of degree $x_1^2$ and $x_2^3$, together with a relation of 
degree $x_1^2 x_2^3$. In general, let us denote 
by 
$$
\gamma_i,\rho_i \in \Z_{\ge 0}^2  \subset \Z^3
$$
the multiindex degrees of generators and relations of $I_E$. 
The we have an equivariant free resolution
\begin{equation}
0 \to \bigoplus_{i=1}^{g-1}  x^{\rho_i}  \cO_{\C^3}
\to \bigoplus_{i=1}^g  x^{\gamma_i}  \cO_{\C^3} \to 
I_E \to 0
\label{free_res}
\end{equation}
where $g$ is the number of the generators (number of inner 
corners). 

Since the $\bA$-weight of $x_3\in T^*C$ is minus the weight 
of $TC$, the contribution of a monomial 
$x^\bx\in \cO_{Z\setminus E}$ to the index of $\Xi_Z$ 
equals the $\sigma$-index
of the following $\bA$-module 
$$
\Xi_\bx= x_3^{-N} \sum_{i=1}^g{x^{\bx-\gamma_i}} + x_3^{N} 
\sum_{i=1}^{g-1}{x^{\rho_i-\bx}} \,. 
$$
This index is computed as follows. We may assume 
$$
d\sigma  =  x_1 \frac{\partial}{\partial x_1} 
- x_2 \frac{\partial}{\partial x_2} \,. 
$$
Consider the function
$$
c((a_1,a_2,a_3)) = a_1 - a_2 \,. 
$$
Clearly, $d\sigma_0 \cdot x^\bx = c(\bx)\, x^\bx$. 
The level sets of $c(\bx)$ 
are the diagonal slices in Figure \ref{f_chim}. Let $f_\lambda(s)$ be 
the profile of $\lambda$, defined by 
\begin{equation}
\tfrac{1}{2} f''_\lambda = \sum \delta_{c(\gamma_i)}
- \sum \delta_{c(\rho_i)}\,,\label{profile}
\end{equation}
together with 
$$
f_\lambda(s) = |s|\,, \quad |s| \gg 0 \,. 
$$
Then 
$$
\sind_\sigma \Xi_\bx = f'_\lambda(c(\bx)) 
$$
extended by left or right continuity depending on the weight of $x_3$. 
This is color-coded in Figure \ref{f_chim}. Clearly, 
$$
\sind_\sigma \Xi_\bx = \xi_\lambda(\bx)
$$
which concludes the proof.

\appendix

\section{Appendix}

\subsection{Proof of the balance lemma}

\subsubsection{} 

Since $\cO_{Z}/\cO_{E}$ is zero-dimensional, the statement of 
Lemma \ref{l_bal} is purely local and we can assume we 
are in the situation of Section \ref{s_chim}.

We begin we the special case $E=\varnothing$, so that 
$\cN(E)=0$. We 
need to show 
\begin{equation}
\cN(Z) \equiv x_3^{-N} \, \chi(\cO_Z) - 
x_3^{N} \, \overline{\chi(\cO_Z)}
\label{bal1}
\end{equation}
for all sufficiently large $N$ and the relation $\equiv$ on
$\bA$-modules
means that they have the same $\sigma$-index. 

\subsubsection{} 

In fact, it suffices to take $N$ so that $x_3^N \in \cI_Z$, 
in other words, that $N$ is larger than the height of the 
stack of boxes corresponding to $Z$. As we will see, 
the difference
between the left-hand side and the right-hand side of
\eqref{bal1} corresponds to the deformations of 
$$
\cF = \pi_{12*} \left(\cI_{Z}/x_3^N \right)\,.
$$
Here $\pi_{12}$ is the projection onto the $(x_1,x_2)$-plane 
and $\pi_{12*}$ means that we view $\cI_{Z}/x_3^N$ as 
a $\cO_{\C^2}=\C[x_1,x_2]$ module, that is as 
a (degenerate) framed rank $N$ instanton on $\C^2$. 

Away from the origin of $\C^2$, $\cF$ is the same as 
$$
\cF_{\varnothing} = \sum_{i=0}^{N-1} x_3^i \, \cO_{\C^2}\,. 
$$
The $\bA$-action on the $x_3$ is thus transformed into 
into the action on the framing of $\cF$. 

\subsubsection{}

Moduli of framed torsion-free sheaves on $\C^2$ is a smooth 
manifold with tangent space
\begin{equation}
\cN_2(Z) = \chi_{\C^2}(\cF_{\varnothing},\cO_Z) 
+  \chi_{\C^2}(\cO_Z,\cF_{\varnothing}) - \chi_{\C^2}(\cO_Z,\cO_Z) \,,
\label{Xi2F}
\end{equation}
where $\chi_{\C^2}$ means we treat all sheaves
as $\bA$-equivariant $\cO_{\C^2}$-modules. In particular, 
$$
\chi_{\C^2}(\cO_0,\cO_0) = (1 - x_1^{-1}) (1- x_2^{-1}) = 
(1-x_3^{-1})^{-1}\, \chi(\cO_0,\cO_0) 
$$
where $0\in \C^2\subset \C^3$ is the origin. It follows 
that 
$$
\cN(Z) = (1-x_3^{-1}) \, \cN_2(Z) + x_3^{-N} \, \chi(\cO_Z) - 
x_3^{N} \, \overline{\chi(\cO_Z)} 
$$
Therefore, \eqref{bal1} is equivalent to showing 
\begin{equation}
\sind_\sigma \, \cN_2(Z) = 0 
\label{bal2}
\end{equation}

\subsubsection{}

The symplectic form $dx_1 \wedge dx_2$ on $\C^2$ induces
a symplectic form 
on instanton moduli and, in particular, 
a symplectic form $\omega_\cF$
on tangent space $\cN_2(\cF)$ to $\cF$. The torus $\bA$
scales this symplectic form with the weight of $x_1 x_2$, 
which is the 
same as the weight of $x_3^{-1}$. Therefore, $\omega_\cF$ pairs
attracting and repelling direction with the exception of 
the weights $0$ and $x_3^{-1}$. Therefore \eqref{bal2}
follows from the following 

\begin{Lemma} 
$$
\cN_2(Z)^{\bA} = 0\,.
$$
\end{Lemma}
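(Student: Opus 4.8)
The plan is to use \eqref{Xi2F}, which says that $\cN_2(Z)$ is the tangent space at $\cF$ to the smooth variety $\cM$ of rank $N$ framed torsion-free sheaves on $\C^2$, where $N$ exceeds the height of $Z$ and the framing weights are the monomials $x_3^0,\dots,x_3^{N-1}$ recording the slots of $\cF_\varnothing$. Since $\cM$ is smooth and carries an action of $\bA$, the fixed locus $\cM^\bA$ is smooth with $(T_\cF\cM)^\bA=T_\cF(\cM^\bA)$, so the Lemma is equivalent to the statement that $\cF$ is an \emph{isolated} point of $\cM^\bA$.

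First I would pin down $\cF$. Choosing a generic cocharacter $\sigma\colon\Ct\to\bA$, the induced $\Z$-grading splits the $\bA$-equivariant sheaf $\cF$ on the base $\C^2$ into pieces of rank at most $1$; as $\sigma$ separates the $N$ framing characters $x_3^{\,i}$, there are exactly $N$ of them, so $\cF=\bigoplus_{i=0}^{N-1}\cF_i$ with each $\cF_i$ a rank one framed sheaf, i.e.\ a twisted ideal of points. Since $\bA$ maps isomorphically onto the torus acting on $\C^2$, each $\cF_i$ is a monomial ideal $\cI_{\lambda^{(i)}}$; and since $\cF\subset\cF_\varnothing$ is stable under multiplication by $x_3$, which shifts the slot index, one gets the nesting $\lambda^{(0)}\supseteq\lambda^{(1)}\supseteq\dots\supseteq\lambda^{(N-1)}$. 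In particular $\cF$ is one of the standard $\bT$-fixed points of $\cM$, and it remains to check that $\bA\subset\bT$ is generic enough at this point.

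The last step is a weight count. By the standard arm--leg description of tangent spaces to moduli of framed sheaves on $\C^2$, the $\bT$-weights of $T_\cF\cM$ are the characters $x_3^{\,j-k}\,x_1^{p}x_2^{q}$, where $x_1^{p}x_2^{q}$ runs over the (generalized) arm--leg monomials of the pair $(\lambda^{(j)},\lambda^{(k)})$. Using $x_1x_2x_3=1$ on $\bA$, such a weight is $\bA$-trivial exactly when $p=q=j-k$. For the diagonal blocks $j=k$ this never occurs, the arm--leg exponents being $(\ell+1,-a)$ or $(-\ell,a+1)$ with $a,\ell\ge 0$, never of the shape $(m,m)$; this is the classical fact that $\Hilb(\C^2)$ has isolated fixed points under the base torus. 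The hard part will be the off-diagonal blocks $j\ne k$: one must show that the generalized arm and leg statistics of the \emph{nested} family $\lambda^{(\bullet)}$ never produce the monomial $x_1^{j-k}x_2^{j-k}$, and this is precisely where the hypothesis that $Z$ lies in a formal neighbourhood of the $x_3$-axis --- rather than merely being $\bA$-invariant, which would permit non-nested configurations --- is used. Granting this combinatorial input, no tangent weight is trivial on $\bA$, so $\cF$ is isolated in $\cM^\bA$ and $\cN_2(Z)^\bA=0$.
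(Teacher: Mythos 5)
There is a genuine gap at the decisive step. Your reduction is right and parallels the paper: decompose $\cF = \bigoplus x_3^{\,i}\cI_{\lambda^{(i)}}$ with nested partitions, observe that $\cN_2(Z)$ breaks into blocks $x_3^{\,j-i}\bigl(\chi_{\C^2}(\cO_{\C^2}) - \chi_{\C^2}(\cI_{\lambda^{(i)}},\cI_{\lambda^{(j)}})\bigr)$, and note that the $\bA$-invariance question for the block with indices $i<j$ is whether the weight $(x_1x_2)^{\,j-i}$ occurs in $\chi_{\C^2}(\cO_{\C^2}) - \chi_{\C^2}(\cI_{\lambda^{(i)}},\cI_{\lambda^{(j)}})$, given $\cI_{\lambda^{(i)}}\subset\cI_{\lambda^{(j)}}$. (The reframing of the Lemma as isolatedness of $\cF$ in $\cM^\bA$ via smoothness is a pleasant observation, but it does not itself remove the need to check this.) However, you stop precisely here, writing "granting this combinatorial input." That input is not a standard arm--leg fact for a single partition; it is a non-obvious statement about \emph{pairs} of nested monomial ideals, and it is exactly Lemma \ref{l_incl} of the paper.

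To close the gap you would argue as the paper does: by the duality
$\chi_{\C^2}(\cI,\cJ)^\vee = (x_1x_2)\,\chi_{\C^2}(\cJ,\cI)$
one may assume $\cI\supset\cJ$ and must show $(x_1x_2)^{n}$ with $n<0$ is absent. Taking an equivariant free resolution of each ideal as in \eqref{free_res}, with generator degrees $\{a_i\}\subset(\Z_{\ge 0})^2$ for $\cI$ and relation degrees $\{b_j\}\subset(\Z_{\ge 0})^2$ for $\cJ$, one finds
$$
\supp\,\chi_{\C^2}(\cI,\cJ) \;\subset\; \bigcup_{i,j}\supp\, x^{\,b_j-a_i}\cO_{\C^2}\,,
$$
and the inclusion $\cI\supset\cJ$ forces every $b_j-a_i\notin(\Z_{<0})^2$. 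Hence the whole support avoids $(\Z_{<0})^2$, so no weight $(x_1x_2)^n$ with $n<0$ appears. This is the step your proof needs, and without it the claim "no tangent weight is trivial on $\bA$" is unsupported for the off-diagonal blocks.
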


\begin{proof}
We have 
$$
\cF = \bigoplus_{i=1}^N x_3^{i-1} \, \cI_{\lambda^{(i)}} 
$$
where $\cI_\lambda \subset \C[x_1,x_2]$ is a monomial ideal 
corresponding to a partition $\lambda$ and
$$
\lambda^{(1)} \supset \lambda^{(2)} \supset \dots \supset 
\lambda^{(N)} \,. 
$$
Therefore 
$$
\cN_2(Z) = \sum_{i,j} 
x_3^{j-i}\left(\chi_{\C^2}(\cO_{\C^2}) - 
\chi_{\C^2}
\left(\cI_{\lambda^{(i)}},\cI_{\lambda^{(j)}}\right)
\right)
$$
The claim then follows from the following Lemma \ref{l_incl}. 
\end{proof}

\subsubsection{}

\begin{Lemma}\label{l_incl} 
Let $\cI,\cJ\in \C[x_1,x_2]$ be monomial ideals. If 
$$
(n <0 \textup{ and } \cI \supset \cJ) \textup{ or }
(n \ge 0 \textup{ and } \cI \subset \cJ) 
$$
then the weight $(x_1 x_2)^n$ 
does not occur in 
$$
\chi_{\C^2}(\cO_{\C^2}) - 
\chi_{\C^2}
\left(\cI,\cJ\right)\,. 
$$
\end{Lemma}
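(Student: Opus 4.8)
The plan is to expand $\chi_{\C^2}(\cO_{\C^2})-\chi_{\C^2}(\cI,\cJ)$ into a small number of pieces whose torus weights can be read off, and then to check that the monomial $x_1^n x_2^n$ is absent from each piece. We may and do assume $\cI,\cJ$ have finite colength, which is the only case needed. Write $Q_\cI=\cO_{\C^2}/\cI$, $Q_\cJ=\cO_{\C^2}/\cJ$. From the exact sequences $0\to\cI\to\cO_{\C^2}\to Q_\cI\to 0$ and $0\to\cJ\to\cO_{\C^2}\to Q_\cJ\to 0$ one obtains, in $\bT$-equivariant $K$-theory,
\[
\chi_{\C^2}(\cO_{\C^2})-\chi_{\C^2}(\cI,\cJ)=\chi_{\C^2}(Q_\cI,\cO_{\C^2})+\chi_{\C^2}(\cI,Q_\cJ).
\]
Since $Q_\cI$ has codimension $2$ and $\cO_{\C^2}$ is Cohen--Macaulay of global dimension $2$, $\chi_{\C^2}(Q_\cI,\cO_{\C^2})=\operatorname{Ext}^2(Q_\cI,\cO_{\C^2})$, and a minimal free resolution shows every monomial occurring in it is $x_1^{-a-1}x_2^{-b-1}$ with $x_1^ax_2^b\notin\cI$ --- in particular both exponents are $\le-1$. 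Because $\cI$ is a submodule of a free module over the two-variable polynomial ring it has projective dimension $\le 1$; fixing a minimal free resolution $0\to F_1'\to F_0'\to\cI\to 0$ we get $\chi_{\C^2}(\cI,Q_\cJ)=\Hom(\cI,Q_\cJ)-\operatorname{Ext}^1(\cI,Q_\cJ)$, where every monomial of $\Hom(\cI,Q_\cJ)$ has the form $x^{\beta-g}$ and every monomial of $\operatorname{Ext}^1(\cI,Q_\cJ)$ has the form $x^{\beta-r}$, with $x^\beta\notin\cJ$, and $g$, $r$ the exponents of the minimal generators, respectively minimal syzygies, of $\cI$.

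First I would settle the case $\cI\subseteq\cJ$, $n\ge0$, by showing $x_1^nx_2^n$ is missing from each of the three summands above. For $\chi_{\C^2}(Q_\cI,\cO_{\C^2})$ this is immediate, as $n\ge 0>-1$. For $\Hom(\cI,Q_\cJ)$: if $\beta-g=(n,n)$ with $n\ge 0$ then $\beta\ge g$ coordinatewise, hence $x^\beta\in\cI\subseteq\cJ$, contradicting $x^\beta\notin\cJ$. For $\operatorname{Ext}^1(\cI,Q_\cJ)$: the map $F_1'\to F_0'$ has monomial entries, so every syzygy exponent $r$ satisfies $r\ge g$ coordinatewise for some generator exponent $g$; then $\beta-r=(n,n)$ with $n\ge 0$ again forces $x^\beta\in\cI\subseteq\cJ$, the same contradiction. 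Hence the coefficient of $x_1^nx_2^n$ in $\chi_{\C^2}(\cO_{\C^2})-\chi_{\C^2}(\cI,\cJ)$ vanishes.

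For the case $\cI\supseteq\cJ$, $n<0$ I would not redo the argument but reduce to the previous case via Serre duality on $\C^2$, whose dualizing sheaf is the trivial line bundle of $\bT$-weight $x_1x_2$. Since $\cI-\cJ$ has finite support, Serre duality --- together with the $K$-theory identity above applied to both $(\cI,\cJ)$ and $(\cJ,\cI)$ --- gives
\[
\chi_{\C^2}(\cO_{\C^2})-\chi_{\C^2}(\cI,\cJ)=x_1^{-1}x_2^{-1}\;\overline{\chi_{\C^2}(\cO_{\C^2})-\chi_{\C^2}(\cJ,\cI)},
\]
where the bar is the involution $x_i\mapsto x_i^{-1}$. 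The pair $(\cJ,\cI)$ satisfies $\cJ\subseteq\cI$, so by the case already proved the right-hand side contains no monomial $x_1^mx_2^m$ with $m\ge 0$; passing through the bar and the shift by $x_1^{-1}x_2^{-1}$, the left-hand side then contains no $x_1^nx_2^n$ with $n\le-1$, which is exactly the assertion.

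Conceptually this is routine weight bookkeeping once the decomposition is in place, so there is no serious obstacle; the only genuinely two-dimensional input is that $\cI$ has projective dimension at most one, with generators and syzygies governed by the staircase of the monomial ideal --- this is what makes the three-term splitting and the weight count close, and in higher dimension $\operatorname{Ext}^2(\cI,Q_\cJ)$ would intervene and the argument would fail, consistent with the lemma being special to two variables. The things to watch are the sign conventions (the $\bT$-weight of the monomial $x_i$ is opposite to that of the $i$-th coordinate direction, so that $(x_1x_2)^n$ is indeed the monomial $x_1^nx_2^n$) and the bookkeeping in the Serre-duality reduction; the general, not necessarily finite-colength, version of the lemma --- not needed here --- follows by the same expansion while tracking the cancellation between the three terms, or by a limit from the finite case.
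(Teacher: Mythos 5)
Your proof is correct, and it rests on the same two pillars as the paper's own argument --- a duality reduction exchanging the two cases, and weight bookkeeping extracted from equivariant (staircase) resolutions of monomial ideals --- but the packaging is genuinely different. The paper handles the case $\cI\supset\cJ$, $n<0$ directly: writing both ideals through their generators and relations, it notes that every shift $b_j-a_i$ entering $\chi_{\C^2}(\cI,\cJ)$ lies outside $(\Z_{<0})^2$, so the entire support of $\chi_{\C^2}(\cI,\cJ)$, like that of $\chi_{\C^2}(\cO_{\C^2})$, misses the negative quadrant; this is a one-line support bound valid for arbitrary monomial ideals, with no finite-colength hypothesis. You instead treat the complementary case $\cI\subset\cJ$, $n\ge 0$ directly, via the decomposition of $\chi_{\C^2}(\cO_{\C^2})-\chi_{\C^2}(\cI,\cJ)$ into $\operatorname{Ext}^2(\cO/\cI,\cO)$, $\Hom(\cI,\cO/\cJ)$ and $\operatorname{Ext}^1(\cI,\cO/\cJ)$, excluding the diagonal weight from each piece; this needs projective dimension $\le 1$ and your finite-colength reduction, but all three exclusions check out (including the point that each syzygy degree of $\cI$ dominates some generator degree), and your Serre-duality identity $\chi_{\C^2}(\cO_{\C^2})-\chi_{\C^2}(\cI,\cJ)=(x_1x_2)^{-1}\,\overline{\chi_{\C^2}(\cO_{\C^2})-\chi_{\C^2}(\cJ,\cI)}$ is the same duality the paper invokes, used in the mirror direction. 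What your route buys is finer information (the location of the weights of each Ext group); what the paper's route buys is brevity and full generality. The only caveat is your restriction to finite colength: it does suffice for the application in the Appendix, where the ideals are $\cI_{\lambda^{(i)}}$ for finite partitions, and you flag it honestly, but the lemma as stated has no such hypothesis, and the paper's support estimate covers the general case with no extra work, whereas your sketch of the extension ("track the cancellation" or "take a limit") is left undeveloped.
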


\begin{proof}
By duality, it suffices to consider the case $\cI \supset \cJ$. 
{} From a resolution like \eqref{free_res}, we have 
\begin{equation}
  \supp \, \chi_{\C^2}
\left(\cI,\cJ\right)
\subset 
  \bigcup_{i,j} 
\supp \,\, x^{b_j-a_j} \cO_{\C^2} \,,
\label{suppsupp} 
\end{equation}
where 
$$
\{a_i\},\{b_j\} \subset (\Z_{\ge 0})^2
$$ 
are the degrees of 
generators and relations for $I$ and $J$, respectively, and 
by support of a $\bA$-module we mean the support of its
Fourier transform, that is, the set of weights that occur in it. 

The inclusion $\cI \supset \cJ$ obviously implies 
$$
b_j - a_i \notin (\Z_{<0})^2
$$
and hence all supports in \eqref{suppsupp} are 
disjoint from $(\Z_{<0})^2$. 
\end{proof}

This completes the proof of \eqref{bal1}. 

\subsubsection{}

We now deduce the general case of Lemma \ref{l_bal} from 
its special case \eqref{bal1}. Choose $N$ so large that 
$x_3^N$ annihilates $\cO_{Z\setminus E}$. Expanding 
$\cN$ using 
$$
\left[\cO_{Z}\right] = 
\left[\cO_{Z\setminus E}\right] 
+
\left[\cO_{E}/x_3^N\right] + x_3^N \left[\cO_{E}\right]
$$
we get 
\begin{equation}
\cN(Z) - \cN(E)  = 
  \cN(Z/x_3^N) - \cN(E/x_3^N)  
- \Alt \, x_3^{-N} \chi(\cO_E, \cO_{Z\setminus E} ) \,. 
\end{equation}
where, by definition, 
$$
\Alt V = V - V^\vee 
$$
for any $\bA$-module $V$. 
{}From \eqref{bal1} we conclude
$$
\cN(Z/x_3^N) - \cN(E/x_3^N)  \equiv \Alt 
 x_3^{-N} \chi(\cO_{\C^3}, \cO_{Z\setminus E}) \,.
$$
This concludes the proof.

\newpage 

\noindent
Nikita Nekrasov\\ 
Simons Center for Geometry and Physics\\ 
Stony Brook University, Stony Brook NY 11794-3636, U.S.A.
\footnote{on leave of absence from:
IHES, Bures-sur-Yvette, France, 
ITEP and IITP, Moscow, Russia}
\\
\texttt{nikitastring@gmail.com} 

\vspace{+10 pt}

\noindent 
Andrei Okounkov\\
Department of Mathematics, Columbia University\\
New York, NY 10027, U.S.A.\\

\vspace{-12 pt}

\noindent 
Institute for Problems of Information Transmission\\
Bolshoy Karetny 19, Moscow 127994, Russia\\

\vspace{-12 pt}

\noindent 
Laboratory of Representation
Theory and Mathematical Physics \\
Higher School of Economics \\ 
Myasnitskaya 20, Moscow 101000, Russia

\end{document}